\theoremstyle{plain}
\newtheorem{corollary}{Corollary}
\newtheorem{definition}{Definition}
\newtheorem{example}{Example}
\newtheorem{lemma}{Lemma}
\newtheorem{proposition}{Proposition}
\newtheorem{remark}{Remark}
\newtheorem{theorem}{Theorem}
\numberwithin{equation}{section}
\begin{document}
\title[Gabor analysis on the flat cylinder]{Gabor frames for quasi-periodic
functions and polyanalytic spaces on the flat cylinder}
\dedicatory{Dedicated to Karlheinz Gr\"{o}chenig on the occasion of his 65th
birthday}
\author{L. D. Abreu}
\address{Faculty of Mathematics \\
University of Vienna \\
Oskar-Morgenstern-Platz 1 \\
1090 Vienna, Austria}
\email{abreuluisdaniel@gmail.com}
\author{F. Luef }
\address{Department of Mathematics, NTNU Trondheim, 7041 Trondheim, Norway}
\email{franz.luef@ntnu.no}
\author{M. Ziyat}
\email{ziyatmohammed17@gmail.com}
\address{Laboratory of Analysis and Applications, Department of Mathematics,
Faculty of Sciences, Mohammed-V University in Rabat, P.O. Box 1014, Rabat,
Morocco}
\subjclass[2010]{42C40, 46E15, 42C30, 46E22, 42C15}
\keywords{Fock space, time-invariant functions, Sampling and Interpolation,
Beurling density, Gabor frames}
\thanks{The authors would like to thank Antti Haimi for his input during the
early stages of this work, to Allal Ghanmi for motivating discussions and to
Michael Speckbacher for valuable remarks. This research was supported by the
Austrian Science Fund (FWF), P-31225-N32 and 10.55776/PAT8205923.}

\begin{abstract}
We develop an alternative approach to the study of Fourier series, based on
the Short-Time-Fourier Transform (STFT) acting on $L_{\nu }^{2}(0,1)$, the
space of measurable functions $f$ in $\mathbb{R}$, square-integrable in $%
(0,1)$, and \emph{time-periodic up to a phase factor}: for fixed $\nu \in 
\mathbb{R}$, 
\begin{equation*}
f(t+k)=e^{2\pi ik\nu }f(t)\text{, \ }k\in \mathbb{Z}\text{.}
\end{equation*}%
The resulting phase space is the vertical strip $\mathbb{C}/\mathbb{Z}%
=[0,1)\times \mathbb{R}$, a flat model of an infinite cylinder, which leads
to Gabor frames with an interesting structure theory, allowing a
Janssen-type representation. As expected, a Gaussian window leads to a Fock
space of entire functions, studied in the companion paper by the same
authors [\emph{Beurling-type density theorems for sampling and interpolation
on the flat cylinder}]. When $g$ is a Hermite function, we are lead to \emph{%
true} Fock spaces of polyanalytic functions (\emph{Landau Level}
eigenspaces) on the vertical strip $[0,1)\times \mathbb{R}$. \ We first
prove a density conditions for interpolating in this space. Furthermore, an
analogue of the sufficient Wexler-Raz conditions is obtained. This leads to
a new criteria for Gabor frames in $L^{2}(\mathbb{R})$, to sufficient
conditions for Gabor frames in $L_{\nu }^{2}(0,1)$ with Hermite windows (an
analogue of a theorem of Gr\"{o}chenig and Lyubarskii about Gabor frames
with Hermite windows) and with totally positive windows in the Feichtinger
algebra (an analogue of a recent theorem of Gr\"{o}chenig). We also consider
a vectorial STFT in $L_{\nu }^{2}(0,1)$ and, using the vector with the first
Hermite functions as window, we introduce the (full) Fock spaces of
polyanalytic functions on $[0,1)\times \mathbb{R}$, their associated
Bargmann-type transforms, and prove an analogue of Vasilevski's orthogonal
decomposition into true polyanalytic Fock spaces (Landau level eigenspaces
on $[0,1)\times \mathbb{R}$). We conclude the paper with an analogue of Gr%
\"{o}chenig-Lyubarskii's sufficient condition for Gabor super-frames with
Hermite functions, which is equivalent to a sufficient sampling condition on
the full Fock space of polyanalytic functions on $[0,1)\times \mathbb{R}$.
The proofs of the results about Gabor frames, involving some of Gr\"{o}%
chenig's most significant results of the last 20 years, provide a live
indicator of his influence on the field during this period.
\end{abstract}

\maketitle

\section{Introduction}

The theory to be developed in this paper can be understood as a study of the
properties of time-frequency representations of classical Fourier series.
Given $\nu \in \mathbb{R}$,\ the functions that we will consider on the
`signal side' can be written as the Fourier series 
\begin{equation*}
f(t)=\sum_{k\in \mathbb{Z}}a_{k}e^{2\pi it(\nu +k)}\text{,}
\end{equation*}%
and their STFT transform $V_{g}$ (this is defined in the next section; see
also Proposition \ref{Proposition1} (3)) can be written as the \emph{%
orthonormal expansion}%
\begin{equation}
V_{g}f(x,\xi )=\sum_{k\in \mathbb{Z}}a_{k}e^{2i\pi x(\nu +k-\xi )}\mathcal{F}%
(\bar{g})(\xi -\nu -k)\text{,}  \label{FourierTF}
\end{equation}%
with convergence in $L^{2}([0,1)\times \mathbb{R})$. Since this expansion
lives in a reproducing kernel Hilbert space, the convergence is
automatically uniform, in contrast with Fourier series. Moreover, point
evaluations of $V_{g}f(x,\xi )$ are bounded if $(x,\xi )\in \lbrack
0,1)\times \mathbb{R}$, suggesting the possibility of recovery of $%
V_{g}f(x,\xi )$ from its samples\ on a discrete subset of $(0,1)\times 
\mathbb{R}$ (e. g., the regular lattice $Z=i\beta \mathbb{Z}$). Given $g\in {%
L}^{2}(\mathbb{R})$, one of our main goals is to obtain conditions on the
size of the parameter $\beta >0$ such that the mentioned sampling recovery
can be done in a stable way. This problem will lead to a theory of \emph{%
Gabor frames} adapted to this context.\ For $\nu =0$, (\ref{FourierTF})\ is
exactly the STFT windowed by $g$, of the Fourier series of $f$ in the
time-frequency plane. As we will see in this paper and its companion \cite%
{ALZ}, this leads to time-frequency objects different from those generated
by general transforms in $L^{2}(\mathbb{R})$, but with structural properties
reminiscent of the classical theory, which can be used in the study of the
difficult problem of determining the points which generate Gabor frames with
a given window \cite{Mystery}.

This paper and \cite{ALZ} are essentially dealing with the $L_{2}$ theory of
the STFT on the flat cylinder. We are currently investigating $L_{p}$
aspects of the theory, and will present in \cite{AZLBanach} a detailed study
of the modulation spaces, $p$-Fock spaces and extensions to $L_{p}$ of the
sufficient conditions for frames, obtained in \cite{ALZ} for the Gaussian
case and in section 4 of this paper to other cases, including Hermite
functions.

\subsection{The Short{-time Fourier transform}}

The \emph{time-frequency shifts} of a function $f:{\mathbb{R}^{d}}%
\rightarrow {\mathbb{C}}$ (in the `signal space'), consist of the
composition of the modulation operator, acting on $f$ by $M_{\xi
}f(t)=e^{2\pi i\xi t}f(t)$, with the translation operator, acting on $f$ by $%
T_{x}f(t)=f(t-x)$, resulting in the representation coefficient: 
\begin{equation*}
\pi (x,\xi )f(t):=M_{\xi }T_{x}=e^{2\pi i\xi t}f(t-x),\qquad (x,\xi )\in {%
\mathbb{R}^{2d}},t\in {\mathbb{R}^{d}}\text{,}
\end{equation*}%
which defines a projective representation of ${\mathbb{R}^{2d}\equiv \mathbb{%
C}^{d}}$ acting on ${L}^{2}(\mathbb{R}^{d})$, with co-cycle $\sigma $ given
by $\sigma ((x,\xi ),(x^{\prime },\xi ^{\prime }))=e^{\pi i(x^{\prime }\xi
^{\prime }-x\xi )}$ (here, ${\mathbb{C}^{d}}$ is identified with ${\mathbb{R}%
^{2d}}$ by the correspondence $(x,\xi )\leftrightarrow x+i\xi $). Since the
Heisenberg group $\mathbb{H=}{\mathbb{R}^{2d}}\times \mathbb{R}$\ is a
central extension of ${\mathbb{R}^{2d}}$, $\pi $ can be understood as the ${%
\mathbb{R}^{2d}}$ pull back of Schr\"{o}dinger's representation of $\mathbb{H%
}$ acting on ${L}^{2}(\mathbb{R}^{d})$, defined by \cite[(9.16)]{Charly}: 
\begin{equation*}
\rho (x,\xi ,\tau )g(t)=e^{2\pi i\tau }e^{\pi ix\xi }e^{2\pi it\xi
}g(t-x)=e^{2\pi i\tau }e^{\pi ix\xi }\pi (x,\xi )g(t)\text{.}
\end{equation*}%
Since $\rho $\ is an irreducible unitary representation of $\mathbb{H}$ \cite%
[Theorem 9.2.1]{Charly}, its ${\mathbb{R}^{2d}}$ pull back $\pi $ is also
unitary and irreducible.

Given $f\in L^{2}(\mathbb{R}^{d})$, the{\ \emph{short-time Fourier transform
(STFT)}} with \emph{window} $g\in L^{2}(\mathbb{R}^{d})$ transforms a
function of a variable $t\in {\mathbb{R}^{d}}$\ into a function of two
variables $(x,\xi )\in \mathbb{R}^{2d}$ (time-frequency/phase space: as
physical quantities, $x$ can represent time or position, while $\xi $ can
represent frequency or momentum), defined as {\ 
\begin{equation}
V_{g}f(x,\xi )=\left\langle f,\pi (x,\xi )g\right\rangle _{L^{2}(\mathbb{R}%
^{d})}=\int_{%
\mathbb{R}
^{d}}f(t)\overline{g(t-x)}e^{-2\pi i\xi t}dt\in L^{2}(\mathbb{R}^{2d})\text{.%
}  \label{STFT}
\end{equation}%
A fundamental property of the STFT is the Moyal formula: given }$%
f_{1},f_{2},g_{1},g_{2}\in L^{2}(\mathbb{R}^{d})$,%
\begin{equation}
\left\langle V_{g_{1}}f_{1},V_{g_{2}}f_{2}\right\rangle _{L^{2}(\mathbb{R}%
^{2d})}=\left\langle f_{1},f_{2}\right\rangle _{L^{2}(\mathbb{R}%
^{d})}\left\langle g_{1},g_{2}\right\rangle _{L^{2}(\mathbb{R}^{d})}\text{.}
\label{Moyal_STFT}
\end{equation}%
Some important conventions will be used all over the paper. {We will always
assume }$\Vert g\Vert _{L^{2}(\mathbb{R}^{d})}^{2}=1$; this, combined {with
Moyal's formula (\ref{Moyal_STFT}), assures that the} map 
\begin{equation}
V_{g}:L^{2}(\mathbb{R}^{d})\rightarrow L^{2}(\mathbb{R}^{2d})
\label{STFTmap}
\end{equation}%
is an isometry. For each $g\in L^{2}(\mathbb{R}^{d})$, the STFT map (\ref%
{STFTmap}) defines a proper subspace of the space $L^{2}(\mathbb{R}^{2d})$.
This space is the so-called \emph{Gabor space }\cite{AbrGabor}, for which
the following notation will be used:%
\begin{equation}
\mathcal{H}_{g}\left( \mathbb{R}^{2d}\right) :=V_{g}\left( L^{2}(\mathbb{R}%
^{d})\right) \text{.}  \label{GaborSpaceC}
\end{equation}%
The Gabor space\ is a Hilbert space {with reproducing kernel }explicitly
given as{\ } 
\begin{equation}
K_{g}((x,\xi ),(x^{\prime },\xi ^{\prime }))=\left\langle \pi (x,\xi )g,\pi
(x^{\prime },\xi ^{\prime })g\right\rangle _{L^{2}(\mathbb{R}^{d})}=e^{-2\pi
ix^{\prime }(\xi -\xi ^{\prime })}V_{g}g\left( (x-x^{\prime },\xi -\xi
^{\prime })\right) \text{.}  \label{kernel}
\end{equation}%
For instance, if $g(t)=h_{0}(t)=2^{\frac{d}{4}}e^{-\pi t^{2}}$, then (${%
\mathbb{C}^{d}}$ is identified with ${\mathbb{R}^{2d}}$ by the
correspondence $(x,\xi )\leftrightarrow x+i\xi =z$):%
\begin{equation}
K_{h_{0}}(z,z^{\prime })=e^{i\pi (x\xi -x^{\prime }\xi ^{\prime })}e^{-\frac{%
\pi }{2}(\left\vert z\right\vert ^{2}+\left\vert z^{\prime }\right\vert
^{2})}e^{\pi z\overline{z^{\prime }}}\text{.}  \label{kernelGaussian}
\end{equation}

To simplify the exposition, in the remaining of the paper we will set $d=1$.
In most algebraic arguments we can do this with no loss of generality, but
once analysis enters the picture, things can become quite complicated in
higher dimensions. For instance, choosing the window $g(t)=h_{0}(t)$, one
can recognize the kernel (\ref{kernelGaussian}) as a weighted version of the
reproducing kernel of the Fock space of entire functions, whose \emph{zeros
have a point set structure only for }$d=1$. Exploring this point structure
led to the characterization of sampling and interpolating sequences in Fock
spaces of entire functions \cite{seip1,seip2,Ly,ALZ}, which can be rephrased
as a characterization of Gabor frames and Riesz sequences for $L^{2}(\mathbb{%
R})$. For $d>1$, zeros of entire functions have an elusive structure, which
may include both points and manifolds, and the corresponding
sampling/interpolation problem becomes highly non-trivial, with results
concerning hypersurfaces as sampling nodes obtained in \cite{FockHyper},
with the first results on lattices obtained in \cite{Groe2011,Multivariate}
and a general condition obtained in \cite{LueWang}, after rephrasing the
notion of Beurling density in terms of the Seshadri constant. A recent
preprint by Testorf \cite{JohannesTorus} offers a convincing extension of
the results on flat tori \cite[Theorem 3, Theorem 14]{TFTori} to higher
dimensions, also in terms of the Seshadri constant.

\subsection{Overview and context}

Given a discrete subgroup $\Gamma $ {of }$\left( \mathbb{R}^{2},+\right) $
(a \emph{lattice}),{\ }one can identify the topological space $\mathbb{R}%
^{2}/\Gamma $\ with the fundamental domain of the lattice, a set $\Lambda
(\Gamma )$ tilling $\mathbb{R}^{2}$ by $\Gamma $-translations. $\Lambda
(\Gamma )$ is compact iff the lattice has full rank $2$. We will develop a
theory where $\Lambda (\Gamma )$\emph{\ becomes a phase space of the STFT
acting on a signal space where }$\Gamma $\emph{-periodic conditions have
been imposed}. This contains three fundamentally different cases:

\begin{enumerate}
\item The \emph{classical} trivial case $\Gamma =\{0\}$. In this case, the
signal space is a space of functions $f:{\mathbb{R}}\rightarrow {\mathbb{C}}$%
, the phase space is $\mathbb{R}^{2}/\{0\}=\mathbb{R}^{2}$ and no
periodicity conditions are imposed. It is the classical setting defined in
Section 1.1 above.

\item The \emph{co-compact} case, where $\Gamma $ is an Euclidean lattice of
full rank 2. Then $\mathbb{R}^{2}/\Gamma $ are compact surfaces (tori),
topologically equivalent to the fundamental domains of the lattices $\Gamma $
, named as \emph{flat tori}. For instance, if $\Gamma _{N}=\mathbb{Z\times NZ%
}$, then $\Lambda (\Gamma _{N})=[0,1]\times \lbrack 0,N]$ is the associated
flat tori, which becomes a phase space when the STFT acts on a $N$%
-dimensional space generated by sequences of delta trains \cite[Theorem 3,
Theorem 14]{TFTori}. Slightly different Fock-type spaces, with
Frobenius-type periodic conditions (\cite{Frobenius},\cite[Chapter VI]{Lang}%
) have been studied in \cite%
{DimensionTheta,GhanmiIntissar2008,GhanmiIntissar2013}.

\item The \emph{non co-compact }case of the rank one lattice $\Gamma =\omega 
\mathbb{Z}$, where $\omega \in \mathbb{R}\setminus \{0\}$. This leads to the
flat cylinder as the fundamental region representing $\mathbb{R}^{2}/\Gamma $%
. The signal space is a space of functions $f:{\mathbb{R}}\rightarrow {%
\mathbb{C}}$ is the interval $[0,\omega )$ and the phase space the \emph{%
unbounded} vertical strip $\Lambda (\Gamma )=[0,\omega )\times \mathbb{R}$,
which represents $\mathbb{R}^{2}/\Gamma $ as a \emph{flat cylinder}. If we
set $\omega =1$, then $\Lambda (\mathbb{Z})=[0,1)\times \mathbb{R}$ and the
quasi-periodic `signal space' is ${L}_{\nu }^{2}(0,1)$.
\end{enumerate}

In this work we will be concerned with the non-compact case (3) and
investigate the consequences of letting the STFT (\ref{STFT}) act on the
signal space ${L}_{\nu }^{2}(0,1)$ of\ functions $f$, measurable in $\mathbb{%
R}$, square-integrable in $(0,1)$, and almost periodic in the sense that,
for a fixed parameter $\nu \in \mathbb{R}$, 
\begin{equation}
f(t+k)=e^{2\pi ik\nu }f(t)\text{, \ }k\in \mathbb{Z}\text{.}
\label{periodic}
\end{equation}%
The functional equation (\ref{periodic}) is transformed into the
quasi-periodicity conditions (\ref{PhasePeriodic}) for the resulting STFT
transformed space, the Gabor space%
\begin{equation}
\mathcal{H}_{g}^{\nu }\left( \mathbb{R}^{2}/\mathbb{Z}\right) :=V_{g}\left( {%
L}_{\nu }^{2}(0,1)\right) \text{,}  \label{GaborSpace}
\end{equation}%
a proper subspace of the space $L^{2}(\mathbb{R}^{2}/\mathbb{Z})$ of
measurable functions $F$ on $\mathbb{R}^{2}$, satisfying%
\begin{equation}
F((x+k,\xi ))\,=e^{2\pi ik(\nu -\xi )}F(x,\xi )  \label{PhasePeriodic}
\end{equation}%
and square-integrable on the vertical strip 
\begin{equation}
\Lambda (\mathbb{Z}):=\left[ 0,1\right) \times \mathbb{R}\text{.}
\label{strip}
\end{equation}%
Clearly,%
\begin{equation*}
\mathbb{R}^{2}=\bigcup\limits_{k\in \mathbb{Z}}\Lambda (\mathbb{Z})+k\text{.}
\end{equation*}%
Thus, the region $\Lambda (\mathbb{Z})$ is a fundamental domain for the
quotient group $\mathbb{R}^{2}/\mathbb{Z}$, which is the flat model of an
infinite cylinder and, in analogy with the flat torus \cite[Theorem 3,
Theorem 14]{TFTori}, we will call it \emph{the flat cylinder. }In higher
dimensions,\emph{\ }the corresponding geometric objects are know as \emph{%
quasi-tori }\cite{IZ2,Z1,Ziyat} and one expects the possibility of studying
Gabor frames in this setting, via We note in passing that Testorf \cite%
{JohannesTorus} posted a recent preprint with the extension to several
variables of some of the results \cite[Theorem 3, Theorem 14]{TFTori}.

The classification (1)-(3) was the starting point of a systematic study of
the properties of Fock-type spaces of functions with periodicity conditions,
a program carried out by Ahmed Intissar with his students Ghanmi \cite%
{GhanmiIntissar2008,GhanmiIntissar2013} and the third named author of this
paper \cite{IZ1,IZ2,Z1}. In our STFT approach to (3), choices of Gaussian
and Hermite windows will lead, respectively, to analytic and true
polyanalytic Fock spaces (associated with the Landau Levels in $L^{2}(%
\mathbb{R}^{2}/\mathbb{Z})$) and recover the associated true polyanalytic
Bargmann transforms \cite{Z1,IZ1} for the case (3), extending a similar
identification for the case (1) \cite{Abr2010}. The conditions (\ref%
{PhasePeriodic}) become, under the action of the Bargmann transform,
automorphic conditions associated with the action of the Weyl operator
acting by horizontal translations. In this direction, a contribution of our
work is the construction of the (full) Fock space of polyanalytic functions
on $\mathbb{C}/\mathbb{Z\equiv R}^{2}/\mathbb{Z}$, the extension to the case
(3) of Vasilevski's orthogonal decomposition of polyanalytic Fock spaces in
true polyanalytic Fock spaces \cite{VasiFock} and the construction of the
associated polyanalytic Bargmann transform.

In the classical case (1), of the standard STFT acting in $L^{2}(\mathbb{R})$%
, Landau levels spaces, are defined as the (Fock-type) eigenspaces of the
Landau operator $\mathcal{L}_{z,\overline{z}}$, acting on $L^{2}(\mathbb{C}%
,e^{-\left\vert z\right\vert ^{2}})$, where \cite%
{DimensionTheta,GhanmiIntissar2008,GhanmiIntissar2013,IZ1} \cite{IZ1,IZ2,Z1} 
\begin{equation*}
\mathcal{L}_{z,\overline{z}}:=-\partial _{z}\partial _{\overline{z}}+%
\overline{z}\partial _{\overline{z}}\text{.}
\end{equation*}%
The Landau levels eigenspaces can be identified with STFT spaces with
Hermite windows \cite{Abr2010} and have been identified with Vasilevski's 
\cite{VasiFock} true/pure polyanalytic Fock spaces in \cite[Remark 3]%
{Abr2010}, a connection independently discovered and presented in detailed,
rigorous form, by Mouayn \cite{Mouayn}. These discoveries were spearheaded
by the results of Gr\"{o}chenig and Lyubarskii on Gabor frames with Hermite
functions \cite{CharlyYura,CharlyYurasuper}, and led to new perspectives in
the field of polyanalytic function theory \cite{Balk}.

In the co-compact case (2),\ Fock-type spaces were considered in \cite%
{DimensionTheta,GhanmiIntissar2008,GhanmiIntissar2013,IZ1}. Fixing a
dimension given by the natural number $N$, the choice of the full-rank
lattice $\Gamma _{N}=\mathbb{Z\times NZ}$ leads to spaces on the co-compact
domain $\Lambda (\Gamma _{N})=[0,1]\times \lbrack 0,N]$, which are $N$%
-dimensional. The resulting STFT becomes an off-grid extension of the
Discrete Finite Gabor Transform \cite[Theorem 1]{TFTori}, whose dimension $N$
can be chosen a priori, adapting to the signal analytic application at
hand.\ A precise description of points generating finite Gabor frames with
the Gaussian window, $g(t)=h_{0}(t)=2^{\frac{1}{4}}e^{-\pi t^{2}}$, using
the analiticity of the resulting Fock space on $\Lambda (\Gamma _{N})$, has
been obtained in \cite[Theorem 3]{TFTori}.

Back to the non-compact case (3) investigated in this paper, we shall see
that, imposing almost periodic conditions over $\Gamma =\mathbb{Z}$ as in
(3), leads to a theory of the STFT of quasi-periodic functions in the time
(horizontal) variable. The time-frequency phase space becomes the vertical
strip $[0,1)\times \mathbb{R}$, which is a flat model of an infinite
cylinder, representing a fundamental domain of $\mathbb{R}^{2}/\mathbb{Z}$.
As in the case of general real valued $L^{2}(\mathbb{R})$ functions, a
Gaussian window leads to the Fock space of entire functions on $\mathbb{C}/%
\mathbb{Z{\equiv }R}^{2}/\mathbb{Z}$, while Hermite windows lead to
eigenspaces of the Landau operator on $\mathbb{R}^{2}/\mathbb{Z}$ considered
in \cite{GhanmiIntissar2008,GhanmiIntissar2013,IZ1}, which are also the $%
\mathbb{R}^{2}/\mathbb{Z}$ versions of the true/pure polyanalytic Fock
spaces, in the language of \cite{VasiFock}. We will complete this picture by
introducing a new object, \emph{the (full) Fock space of polyanalytic
functions on }$\mathbb{R}^{2}/\mathbb{Z}$, prove the analogue of
Vasilevski's decomposition \cite{VasiFock} of this new space, as an
orthogonal sum of true polyanalytic Fock spaces, and by defining a
Bargmann-type transform which can be related to a vectorial STFT with $n$
Hermite windows. This transform is motivated by applications in multiplexing
(sending several signals simultaneously with a single transmission route 
\cite{balan00,BalanWH}).

This is both a follow-up and a companion of the paper \cite{ALZ}, where the
point sets $Z\subset \Lambda (\mathbb{Z})$, leading to sampling and
interpolation sequences for the (analytic) Fock space on $\mathbb{C}/\mathbb{%
Z}$, are characterized in terms of upper and lower Beurling densities,
revealing as a critical `Nyquist density' the real number $1$, meaning that 
\emph{the condition }$D^{-}(Z)>1$\emph{\ characterizes sets of sampling},
while \emph{the condition }$D^{+}(Z)<1$\emph{\ characterizes sets of
interpolation}. These results have an interpretation in terms of Gabor
frames and Riesz basic sequences. This is equivalent to sampling and
interpolation theorems for the space $\mathcal{H}_{h_{0}}^{\nu }\left( 
\mathbb{R}^{2}/\mathbb{Z}\right) $, where $h_{0}$ is a Gaussian window, and
leads to Gabor frame (resp. Riesz basic sequences) density theorems for the
Gabor system in ${L}_{\nu }^{2}(0,1)$ defined by the parameterized theta
functions $\{e^{2i\pi \xi t}\theta _{\xi ,z_{1}}(-t,i):\quad z=x+i\xi \in
Z\} $.

We will then pursue the study of Gabor frames in ${L}_{\nu }^{2}(0,1)$ for a
general window $g\in L^{2}(\mathbb{R})$, {and obtain, in this setting,
analogues of the so-called Janssen's representation of the Gabor frame
operator and of Wexler-Rax's orthogonality relations. }The Hermite window
choice $g=h_{n}$ leads to Gabor systems of parameterized Hermite-theta
functions of the form 
\begin{equation*}
\theta _{\alpha ,\beta }^{(r)}(z)=\sum_{k\in \mathbb{Z}}e^{-\pi (k+\alpha
)^{2}+2i\pi (z-\beta )(k+\alpha )}h_{r}\left( \sqrt{2\pi }(k+\alpha +\Im
(z)\right)
\end{equation*}%
The Wexler-Rax's relations will allow to show that, derive a sufficient
condition on $\beta $, when $Z$ is a regular lattice $Z=i\beta \mathbb{Z}$, $%
\beta <\frac{1}{r+1}$is a sufficient condition for the Gabor system with
Hermite-theta functions, $\{e^{2i\pi \beta nt}\theta _{\nu -\beta
n,0}^{(r)}(t)\;|\;n\in \mathbb{Z}\}$ to be a Gabor frame for ${L}_{\nu
}^{2}(0,1)$. This extends to our setting well known results of Gr\"{o}chenig
and Lyubarskii for $L^{2}(\mathbb{R})$ \cite{CharlyYura,CharlyYurasuper}.

\subsection{Dedication}

This article is dedicated to Karlheinz `Charly' Gr\"{o}chenig, our
`lighthouse of time-frequency analysis'. After setting a standard for the
mathematical foundations of the discipline with the monograph \cite{Charly},
Gr\"{o}chenig went on a journey across several mathematical fields, often
blurring the frontiers between pure and applied mathematics, ultimately
combining seemling unrelated subjects in the solution of important
mathematical problems. While writing this paper, we were often confronted
with the reality that nowadays, in time-frequency analysis, one cannot go
very far without using some of his results. For a sample, the proofs in our
contribution required, at least: Wiener's lemma for non-commutative tori 
\cite{GL}, ideas from Gabor frames with Hermite functions \cite%
{CharlyYura,CharlyYurasuper} and the recent striking result \cite{GroAIM},
where it is shown that Sch\"{o}enberg's totally positive functions \cite%
{sch51} with separable rational lattices yield frames if and only if $\alpha
\beta <1$.

\subsection{Outline}

The paper is organized as follows. In the next section we present the
elementary properties of quasi-periodic functions and of their STFT. Section
3 considers properties of Fock spaces on $\mathbb{R}^{2}/\mathbb{Z}$, of
analytic and true polyanalytic functions, together with their associated
Bargmann-type transforms. Section 4 is the most significant part of the
paper, where a detailed study of the structure of Gabor frames in ${L}_{\nu
}^{2}(0,1)$ is carried out, leading to an analogue of the Wexler-Raz
relations, which will be used to show several sufficient conditions for
Gabor frames $\mathcal{G}_{\nu }\left( g,Z_{\beta }\right) $ in ${L}_{\nu
}^{2}(0,1)$, where $Z_{\beta }=\beta \mathbb{Z}$. We will see that these
particular frames\ have an intriguing connection with Gabor frames $\mathcal{%
G}\left( g,\Lambda _{1,\beta }\right) $ in $L^{2}\left( \mathbb{R}\right) $,
for the lattice $\Lambda _{1,\beta }=\mathbb{Z}\times \beta \mathbb{Z}$ in $%
\mathbb{R}^{2}$. In the final section, we introduce polyanalytic Fock spaces
in this context, their Bargmann-type transforms and show that they enjoy a
Vasilevski-type orthogonal decomposition into pure polyanalytic Fock spaces
(Landau Levels on $[0,1)\times \mathbb{R}$). The paper is concluded with a
vectorial version of the Wexler-Raz relations for Gabor frames in ${L}_{\nu
}^{2}\left[ (0,1),\mathbb{C}^{N}\right] $, which is used to obtain an
analogue of Gr\"{o}chenig-Lyubarskii's sufficient condition for Gabor
super-frames with Hermite functions \cite{CharlyYurasuper}, or, equivalently
to a sufficient sampling condition on the full Fock space of polyanalytic
functions on $[0,1)\times \mathbb{R}$.

\section{The space ${L}_{\protect\nu }^{2}(0,1)$ and the Short-Time Fourier
Transform}

$\Gamma $-quasi-periodic functions are defined by the functional equation%
\begin{equation*}
f(t+\gamma )=\chi _{\nu }(\gamma )f(t)\text{,}
\end{equation*}%
where $\chi _{\nu }$ is a character from $\Gamma $ to the unit circle,
mapping a generic element $\gamma =n\alpha \in \Gamma $ to $\chi _{\nu
}(\gamma )=e^{2\pi in\nu }$, $\nu \in \mathbb{R}$

\subsection{Almost periodic square integrable functions}

Consider a lattice $\Gamma =\mathbb{Z}\alpha $, $\alpha \in \mathbb{R}%
\setminus \{0\}$, and $\Gamma $-quasi-periodic functions defined by the
functional equation%
\begin{equation}
f(t+\gamma )=\chi _{\nu }(\gamma )f(t)\text{,}  \label{functional}
\end{equation}%
where $\chi _{\nu }$ is a character from $\Gamma $ to the unit circle,
mapping a generic element $\gamma =n\alpha \in \Gamma $ to $\chi _{\nu
}(\gamma )=e^{2\pi ina\nu }$, $\nu \in \mathbb{R}$. Without loss of
generality, we can set $\alpha =1$, then $\Gamma =\mathbb{Z}$ and (\ref%
{functional}) is%
\begin{equation}
f(t+n)=e^{2\pi in\nu }f(t)\text{, \ }n\in \mathbb{Z}\text{.}  \label{shift}
\end{equation}%
The space of square integrable almost periodic functions defines the signals
to be transformed by the STFT: 
\begin{equation*}
{L}_{\nu }^{2}(0,1)=\left\{ f:\mathbb{R}\rightarrow \mathbb{C}\ \text{%
measurable with }\left\Vert f\right\Vert _{L^{2}(0,1)}<\infty \text{,
verifying (\ref{functional})}\right\} \text{.}
\end{equation*}%
From the definition, expanding $f\in L^{2}(0,1)$ in Fourier series, we can
write a generic element $f\in {L}_{\nu }^{2}(0,1)$ as%
\begin{equation}
f(t)=\sum_{k\in \mathbb{Z}}a_{k}e^{2\pi it(\nu +k)}\text{{.}}  \label{signal}
\end{equation}%
It follows that $\{e^{2\pi it(\nu +k)}\}_{k\in \mathbb{Z}}$ is an
orthonormal basis of ${L}_{\nu }^{2}(0,1)$. We will use the notation 
\begin{equation}
e_{k,\nu }(t)=e^{2\pi it(\nu +k)}\text{.}  \label{basis0}
\end{equation}%
Recall that Feichtinger's algebra $\mathcal{S}_{0}(\mathbb{R})$ can be
defined as \cite{Fei,FeiModulation,Modulation,Charly}: 
\begin{equation*}
\mathcal{S}_{0}(\mathbb{R}):=\left\{ g\in {L}^{2}(\mathbb{R}):V_{h_{0}}g\in
L^{1}(\mathbb{R}^{2})\right\} \text{.}
\end{equation*}%
For $f\in \mathcal{S}_{0}(\mathbb{R})$, one defines a periodization operator
by%
\begin{equation}
\Sigma _{\nu }f(t):=\sum_{k\in \mathbb{Z}}e^{2\pi ik\nu }f(t-k)\text{.}
\label{Period}
\end{equation}

\begin{proposition}
\label{Proposition0}$\Sigma _{\nu }$ is well defined as an operator with
dense range,%
\begin{equation*}
\Sigma _{\nu }:\mathcal{S}_{0}(\mathbb{R})\rightarrow {L}_{\nu }^{2}(0,1)%
\text{.}
\end{equation*}
\end{proposition}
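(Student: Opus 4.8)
The plan is to establish two things: that the series defining $\Sigma_\nu f$ converges (in an appropriate sense) to an element of ${L}_\nu^2(0,1)$ for every $f\in\mathcal{S}_0(\mathbb{R})$, and that the image is dense. For the first point, I would start from the fact that $\mathcal{S}_0(\mathbb{R})$ embeds continuously into the Wiener amalgam space $W(C,\ell^1)$; in particular, for $f\in\mathcal{S}_0(\mathbb{R})$ one has $\sum_{k\in\mathbb{Z}}\sup_{t\in[0,1]}|f(t-k)|<\infty$. This bound makes the series $\sum_{k\in\mathbb{Z}}e^{2\pi ik\nu}f(t-k)$ converge absolutely and uniformly on every compact set, hence pointwise to a continuous function, and a fortiori it is square-integrable on $(0,1)$. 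The quasi-periodicity relation \eqref{functional} is then a direct formal check: replacing $t$ by $t+1$ in \eqref{Period} and reindexing $k\mapsto k-1$ produces the factor $e^{2\pi i\nu}$, which is exactly \eqref{shift} with $n=1$, and the general $n\in\mathbb{Z}$ follows by iteration. The absolute convergence just established is what licenses the reindexing.

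For the second point, I would produce a dense subspace of ${L}_\nu^2(0,1)$ that lies in the range. Recall from \eqref{signal}--\eqref{basis0} that the exponentials $e_{k,\nu}(t)=e^{2\pi it(\nu+k)}$ form an orthonormal basis of ${L}_\nu^2(0,1)$, so it suffices to hit each $e_{k,\nu}$, or at least to approximate arbitrary finite linear combinations $\sum_k a_k e_{k,\nu}$. The natural candidate is to apply $\Sigma_\nu$ to suitable functions in $\mathcal{S}_0(\mathbb{R})$ whose periodization reproduces a prescribed exponential. Concretely, if $\varphi\in\mathcal{S}_0(\mathbb{R})$ has $\widehat{\varphi}$ compactly supported with $\widehat{\varphi}(\nu+k)=1$ and $\widehat{\varphi}(\nu+j)=0$ for $j\ne k$ (a ``partition of unity in frequency''), then the Poisson-type identity underlying the periodization — $\Sigma_\nu f$ has $k$-th Fourier coefficient equal to $\widehat{f}(\nu+k)$, up to normalization — gives $\Sigma_\nu\varphi = e_{k,\nu}$. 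Such $\varphi$ exists because $\mathcal{S}_0$ contains all Schwartz functions, in particular band-limited smooth bumps. Running over $k$ and taking finite linear combinations shows the range contains the dense span of $\{e_{k,\nu}\}_{k\in\mathbb{Z}}$, which proves density.

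The main obstacle, and the step I would be most careful about, is pinning down precisely in what topology ``well defined'' is meant and making the Fourier-coefficient computation rigorous: one must verify that the $k$-th Fourier coefficient of $\Sigma_\nu f$ on $(0,1)$ is indeed $\widehat{f}(\nu+k)$. This is a Poisson summation argument, and it is valid for $f\in\mathcal{S}_0(\mathbb{R})$ because both $f$ and $\widehat{f}$ lie in $W(C,\ell^1)$, so the relevant interchange of summation and integration is justified by the absolute convergence already noted, and the pointwise Poisson summation formula holds on all of $\mathbb{R}$ for $\mathcal{S}_0$ functions. Once that identity is in hand, continuity of $\Sigma_\nu:\mathcal{S}_0(\mathbb{R})\to{L}_\nu^2(0,1)$ also follows, since $\|\Sigma_\nu f\|_{L^2(0,1)}^2=\sum_k|\widehat{f}(\nu+k)|^2\le \|\widehat{f}\|_{W(C,\ell^1)}^2\lesssim\|f\|_{\mathcal{S}_0}^2$. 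I would present the convergence/quasi-periodicity part first, then the Fourier-coefficient identity, and close with the density argument via band-limited bumps.
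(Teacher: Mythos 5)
Your proposal is correct, and the first half (well-definedness, the $L^2(0,1)$ bound via the embedding of $\mathcal{S}_0(\mathbb{R})$ into the Wiener amalgam space, and the reindexing argument for quasi-periodicity) is essentially identical to the paper's proof, which invokes $\mathcal{S}_0(\mathbb{R})\hookrightarrow W(\mathbb{R})$ and the pointwise bound $|\Sigma_\nu f(t)|\lesssim\|f\|_{W(\mathbb{R})}$. Where you genuinely diverge is the density argument. The paper applies $\Sigma_\nu$ to the modulated indicator functions $\varphi_n(x)=e^{2i\pi(n+\nu)x}\chi_{[0,1]}(x)$, whose periodization is trivially $e_{n,\nu}$ because $\sum_k\chi_{[0,1]}(x-k)=1$; you instead periodize band-limited Schwartz bumps $\varphi$ with $\widehat{\varphi}(\nu+j)=\delta_{j,k}$ and use the identity $\langle\Sigma_\nu f,e_{k,\nu}\rangle_{L^2(0,1)}=\widehat{f}(\nu+k)$ (a Poisson-summation computation that checks out: the phases $e^{2\pi im\nu}e^{-2\pi im(\nu+k)}=1$ cancel and the shifted integrals tile $\mathbb{R}$, the interchange being licensed by the $W(C,\ell^1)$ bound). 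Your route costs one extra identity but buys a cleaner verification that the test functions actually lie in $\mathcal{S}_0(\mathbb{R})$: a compactly supported smooth $\widehat{\varphi}$ makes $\varphi$ Schwartz, hence in $\mathcal{S}_0$, with no further work. This is a real advantage, because the paper's $\varphi_n$ are discontinuous and $\mathcal{S}_0(\mathbb{R})\subset C_0(\mathbb{R})$, so they cannot in fact belong to $\mathcal{S}_0$ (consistently, the decay $\widehat{\varphi}_n=O(1/x)$ makes the weighted norm $\|(\cdot)^{7/4}\widehat{\varphi}_n\|_2$ the paper appeals to infinite); the paper's conclusion survives because the range of $\Sigma_\nu$ only needs to be dense, but your choice of test functions is the one that makes the argument airtight as stated. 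As a minor polish, you may drop the phrase ``up to normalization'': with the convention $\widehat{f}(\xi)=\int f(t)e^{-2\pi it\xi}\,dt$ the Fourier coefficient is exactly $\widehat{f}(\nu+k)$.
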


\begin{proof}
If $f\in \mathcal{S}_{0}(\mathbb{R})$, by \cite[Proposition 12.1.4]{Charly}
then $f\in L^{1}(\mathbb{R})$ and $f\in W(\mathbb{R})$, where $W(\mathbb{R})$
is the Wiener space such that%
\begin{equation*}
\left\Vert f\right\Vert _{W(\mathbb{R})}=\sum_{k\in \mathbb{Z}}ess\sup_{t\in
\lbrack 0,1]}\left\vert f(t-k)\right\vert <\infty \text{.}
\end{equation*}%
Using \cite[Lemma 6.1.2]{Charly}, if $f\in W(\mathbb{R})$ then 
\begin{equation*}
\left\vert \Sigma _{\nu }f(t)\right\vert \lesssim \sum_{k\in \mathbb{Z}%
}\left\vert f(t-k)\right\vert \lesssim \left\Vert f\right\Vert _{W(\mathbb{R}%
)}\text{.}
\end{equation*}%
It follows that $\Sigma _{\nu }f(t)$ is well defined for $f\in \mathcal{S}%
_{0}(\mathbb{R})$, since, by \cite[Lemma 6.1.2]{Charly}, 
\begin{equation}
\left\vert \Sigma _{\nu }f(t)\right\vert \lesssim \left\Vert f\right\Vert
_{W(\mathbb{R})}\lesssim \left\Vert f\right\Vert _{\mathcal{S}_{0}(\mathbb{R}%
)}\text{.}  \label{bound}
\end{equation}%
Now, if $f\in \mathcal{S}_{0}(\mathbb{R})$, then $\Sigma _{\nu }f$ $\in {L}%
_{\nu }^{2}(0,1)$, since (\ref{bound}) trivially implies that%
\begin{equation}
\int\limits_{\lbrack 0,1]}\left\vert \Sigma _{\nu }f(t)\right\vert
^{2}dt\lesssim \left\Vert f\right\Vert _{\mathcal{S}_{0}(\mathbb{R})}
\label{1}
\end{equation}%
and 
\begin{equation*}
\Sigma _{\nu }f(t+n)=\sum_{k\in \mathbb{Z}}e^{2\pi ik\nu }f(t-k+n)=e^{2\pi
in\nu }\Sigma _{\nu }f(t)\text{.}
\end{equation*}%
We consider the functions 
\begin{equation*}
\varphi _{n}(x)=e^{2i\pi (n+\nu )x}\chi _{\lbrack 0,1]}(x),\quad n\in 
\mathbb{Z}\text{.}
\end{equation*}%
From \cite[Theorem 1 (a)]{Poisson} we obtain the inequality:%
\begin{equation*}
\left\Vert V_{g}f\right\Vert _{1}\leq C\left( \left\Vert (.)^{7}f\right\Vert
_{2}+\left\Vert (.)^{7/4}\widehat{f}\right\Vert _{2}\right) \text{.}
\end{equation*}%
Since $\widehat{\varphi }_{k}(x)=O(1/x)$, we conclude that$\left\Vert
V_{g}\varphi _{k}\right\Vert _{1}<\infty $ and, therefore, $\varphi _{k}\in 
\mathcal{S}_{0}(\mathbb{R})$. Now, 
\begin{equation*}
(\Sigma _{\nu }\varphi _{n})(t)=e^{2i\pi (n+\nu )x}\sum_{k\in \mathbb{Z}%
}\chi _{\lbrack 0,1]}(x-k)=e^{2i\pi (n+\nu )x}\text{.}
\end{equation*}%
Since $\{e^{2\pi it(\nu +n)}\}_{n\in \mathbb{Z}}$ is an orthonormal basis of 
${L}_{\nu }^{2}(0,1)$, then $\Sigma _{\nu }:\mathcal{S}_{0}(\mathbb{R}%
)\rightarrow {L}_{\nu }^{2}(0,1)$ has dense range.
\end{proof}

\begin{remark}
\label{Zak}The operator $\Sigma _{\nu }$ ressembles the Zak transform \cite[%
Chapter 8]{Charly}: 
\begin{equation*}
\mathbf{Z}f(x,\xi ):=\sum_{k\in \mathbb{Z}}f(x-k)e^{2\pi ik\xi }\text{,}
\end{equation*}%
since one can write $\Sigma _{\nu }f(t)=\mathbf{Z}f(t,\nu )$. But there is a
subtle big difference: the Zak transform maps a single variable function to
a function in two variables, $f\rightarrow \mathbf{Z}f(t,\nu )$, while $%
\Sigma _{\nu }$ is an operator with \emph{fixed parameter} $\nu \in \mathbb{R%
}$, mapping a single variable function to another function in a single
variable, $f\rightarrow \Sigma _{\nu }f(t)$.
\end{remark}

\begin{remark}
An alternative approach for the time-frequency analysis of periodic signals 
\cite{GabHan,GabLi,LiZhang,LiangLi}, considers subspaces of $L^{2}(\mathbb{R}%
)$ of the form%
\begin{equation*}
L^{2}(S)=\left\{ f\in L^{2}(\mathbb{R}):f=0\text{ a. e. on }\mathbb{R}%
-S\right\} \text{,}
\end{equation*}%
where $S$ is invariant under $\mathbb{Z}$-shifts.
\end{remark}

\begin{remark}
Yet another approach to the analysis of periodic systems is provided by the
theory of Spline-type spaces \cite{FeiSplyne} (also known as shift-invariant
spaces \cite{RonShiftInva}), which, given a generator $\varphi \in L^{p}(%
\mathbb{R})$, are defined as the closed linear span of the family $%
\{T_{\lambda }\varphi \}_{\lambda \in \Lambda }$ in $L^{p}(\mathbb{R})$,
generated by linear combinations with coefficients $\{a_{\lambda
}\}_{\lambda \in \Lambda }\in l^{p}(\mathbb{R})$.
\end{remark}

\begin{remark}
If we want $\Sigma _{\nu }f$ to satisfy the more general conditions (\ref%
{functional}), where $\chi _{\nu }$ is a character from $\Gamma $ to the
unit circle, mapping a generic element $\gamma =n\alpha \in \Gamma $ to $%
\chi _{\nu }(\gamma )=e^{2\pi ina\nu }$, $\nu \in \mathbb{R}$, we define the
operator $\Sigma _{\nu }:\mathcal{S}_{0}(\mathbb{R})\rightarrow {L}_{\nu
}^{2}(0,a)$ by 
\begin{equation*}
\Sigma _{\nu }f(t):=\sum_{\gamma \in \Gamma }\chi _{\nu }(\gamma )f(t-\gamma
)\text{.}
\end{equation*}
\end{remark}

\begin{remark}
In \cite{TFTori} we have introduced yet another operator, $\mathbf{\Sigma }%
_{N}$, which is reminiscent of $\ \Sigma _{\nu }$,\ but instead of
periodizing the function in time, it periodizes the function simultaneously
in the time and frequency variables:%
\begin{equation*}
\mathbf{\Sigma }_{N}f(t):=\sum_{k_{1},k_{2}\in \mathbb{Z}}e^{2\pi
iNk_{2}t}f(t-k_{1})\text{.}
\end{equation*}%
The operator is fundamental for the treatment of the co-compact case (2) of
the introduction. Notably, the image of $S_{0}(\mathbb{R})$ under $\mathbf{%
\Sigma }_{N}$\ does not contain functions. It consists of\ the space $%
S_{N}\subset S_{0}^{\prime }(\mathbb{R})$ defined as the span of $\{\epsilon
_{n}\}_{n=0}^{N-1}$, a sequence of periodic delta trains, where 
\begin{equation}
\epsilon _{n}:=\sum_{k\in \mathbb{Z}}\delta _{\frac{n}{N}+k}\subset
S_{0}^{\prime }(\mathbb{R}),\qquad n=0,...,N-1\text{.}
\label{def:deltatrain}
\end{equation}
\end{remark}

\subsection{Short-time Fourier transform of quasi-periodic functions}

The main properties of {the action of the STFT as a map}%
\begin{equation*}
V_{g}:{L}_{\nu }^{2}(0,1)\rightarrow \mathcal{H}_{g}^{\nu }\left( \mathbb{R}%
^{2}/\mathbb{Z}\right) \text{,}
\end{equation*}%
where $\mathcal{H}_{g}^{\nu }\left( \mathbb{R}^{2}/\mathbb{Z}\right) ${\ is
the Gabor space (\ref{GaborSpace}), }are collected in the next Proposition.

\begin{proposition}
\label{Proposition1}Let $g\in L^{2}(\mathbb{R})$ with $\Vert g\Vert _{L^{2}(%
\mathbb{R})}^{2}=1$. Then:

\begin{enumerate}
\item For $f\in {L}_{\nu }^{2}(0,1)$, 
\begin{equation}
V_{g}f(x+k,\xi )\,=e^{2\pi ik(\nu -\xi )}V_{g}f(x,\xi )\text{.}
\label{almostperiodic}
\end{equation}

\item For $f_{1},f_{2}\in {L}_{\nu }^{2}(0,1)$ and $g_{1},g_{2}\in L^{2}(%
\mathbb{R})$, 
\begin{equation}
\left\langle V_{g_{1}}f_{1},V_{g_{2}}f_{2}\right\rangle _{L^{2}((0,1)\times 
\mathbb{R})}=\left\langle f_{1},f_{2}\right\rangle _{L^{2}(0,1)}\left\langle
g_{1},g_{2}\right\rangle _{L^{2}(\mathbb{R})}\text{.}  \label{Moyal}
\end{equation}

\item $\mathcal{H}_{g}^{\nu }\left( \mathbb{R}^{2}/\mathbb{Z}\right) $
contains the orthonormal basis: 
\begin{equation}
V_{g}(e_{k,\nu })(x,\xi )=e^{2i\pi x(\nu +k-\xi )}\mathcal{F}(\bar{g})(\xi
-\nu -k)\text{,}\qquad k=0,1,...\text{.}  \label{basis_TF}
\end{equation}

\item Let $g\in \mathcal{S}_{0}(\mathbb{R})$.\ The reproducing kernel of $%
\mathcal{H}_{g}^{\nu }\left( \mathbb{R}^{2}/\mathbb{Z}\right) $ is 
\begin{equation}
K_{g}^{\nu }((x,\xi ),(x^{\prime },\xi ^{\prime }))=e^{-2\pi ix^{\prime
}(\xi -\xi ^{\prime })}\sum_{k\in \mathbb{Z}}e^{2\pi ik(\nu -\xi ^{\prime
})}V_{g}g(x-x^{\prime }-k,\xi -\xi ^{\prime })  \label{repkernel1}
\end{equation}%
or 
\begin{equation}
K_{g}^{\nu }((x,\xi ),(x^{\prime },\xi ^{\prime }))=e^{i\pi (x\xi -x^{\prime
}\xi ^{\prime })}\sum_{k\in \mathbb{Z}}e^{2i\pi (\nu +k)(x-x^{\prime })}%
\mathcal{F}(\bar{g})(\xi -\nu -k)\overline{\mathcal{F}(\bar{g})(\xi ^{\prime
}-\nu -k)}\text{.}  \label{repkernel2}
\end{equation}

\item Let $g,f\in \mathcal{S}_{0}(\mathbb{R})$. Then%
\begin{equation*}
V_{g}(\Sigma _{\nu }f)(x,\xi )=\sum_{k\in \mathbb{Z}}e^{2\pi ik(\nu -\xi
)}V_{g}f(x-k,\xi )\text{.}
\end{equation*}
\end{enumerate}
\end{proposition}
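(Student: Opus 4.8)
The plan is to dispatch the five assertions in a deliberately chosen order, since the ``orthonormal basis of $\mathcal{H}_g^\nu$'' part of (3) rests on (2). Everything reduces to a change of variables in the defining integral (\ref{STFT}) together with Plancherel's theorem, so the arguments are computational; the only genuine points of care are the convergence/interchange issues for a general $g\in L^2(\mathbb{R})$ (which is exactly why (4) and (5) carry the hypothesis $g\in\mathcal{S}_0(\mathbb{R})$) and the bookkeeping of the unimodular phase factors. Throughout, $V_g f$ for $f\in L_\nu^2(0,1)$ is understood as the $L^2((0,1)\times\mathbb{R})$-limit of $\sum_k a_k V_g(e_{k,\nu})$ when $f=\sum_k a_k e_{k,\nu}$, as in (\ref{FourierTF}); this makes (1)--(3) meaningful and lets the identities pass to the limit.

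First I would prove (3). The formula (\ref{basis_TF}) is a one-line substitution: setting $s=t-x$ in $V_g(e_{k,\nu})(x,\xi)=\int_{\mathbb{R}}e^{2\pi it(\nu+k)}\overline{g(t-x)}e^{-2\pi i\xi t}\,dt$ pulls out $e^{2\pi ix(\nu+k-\xi)}$ and leaves $\int_{\mathbb{R}}\overline{g(s)}e^{-2\pi is(\xi-\nu-k)}\,ds=\mathcal{F}(\bar g)(\xi-\nu-k)$, which is a bona fide element of $L^2((0,1)\times\mathbb{R})$ since $\bar g\in L^2(\mathbb{R})$. Orthonormality factors through the two variables: $\int_0^1 e^{2\pi ix(k-l)}\,dx=\delta_{k,l}$, and for $k=l$ the $\xi$-integral is $\|\mathcal{F}(\bar g)\|_2^2=\|g\|_2^2=1$ by Plancherel. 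That these vectors span $\mathcal{H}_g^\nu$ follows once (2) is in hand, because then $V_g$ is an isometry of $L_\nu^2(0,1)$ onto $\mathcal{H}_g^\nu$ and hence carries the orthonormal basis $\{e_{k,\nu}\}$ to one of the image. For (2), expand $f_i=\sum_k a_k^{(i)}e_{k,\nu}$; then $\langle V_{g_1}f_1,V_{g_2}f_2\rangle_{L^2((0,1)\times\mathbb{R})}=\sum_{k,l}a_k^{(1)}\overline{a_l^{(2)}}\langle V_{g_1}(e_{k,\nu}),V_{g_2}(e_{l,\nu})\rangle$, and the same two-step computation (integrate in $x$ to get $\delta_{k,l}$, then apply Plancherel to $\mathcal{F}(\bar g_1),\mathcal{F}(\bar g_2)$ in $\xi$) reconstitutes the right-hand side of (\ref{Moyal}); this is simply the analogue of the classical Moyal identity (\ref{Moyal_STFT}), proved by periodizing the $x$-integral over $(0,1)$ in place of $\mathbb{R}$. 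Part (1) then comes for free: when the integral (\ref{STFT}) converges absolutely, substitute $s=t-k$ and use $f(s+k)=e^{2\pi ik\nu}f(s)$; in general, each term of (\ref{basis_TF}) satisfies $V_g(e_{k,\nu})(x+m,\xi)=e^{2\pi im(\nu+k-\xi)}V_g(e_{k,\nu})(x,\xi)=e^{2\pi im(\nu-\xi)}V_g(e_{k,\nu})(x,\xi)$ because $e^{2\pi imk}=1$, so (\ref{almostperiodic}) passes to the $L^2$-limit $V_gf=\sum_k a_kV_g(e_{k,\nu})$.

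For (4), with $g\in\mathcal{S}_0(\mathbb{R})$ the space $\mathcal{H}_g^\nu$ is a reproducing kernel Hilbert space with kernel $K_g^\nu((x,\xi),(x',\xi'))=\sum_{k}V_g(e_{k,\nu})(x,\xi)\,\overline{V_g(e_{k,\nu})(x',\xi')}$; substituting (\ref{basis_TF}) and collecting the phases gives (\ref{repkernel2}) directly. The equivalent form (\ref{repkernel1}) is obtained by recognizing this sum as the $\mathbb{Z}$-periodization of the classical Gabor kernel (\ref{kernel}): summing $K_g\big((x,\xi),(x'+k,\xi')\big)$ over $k$ with the phase prescribed by the quasi-periodicity (\ref{almostperiodic}) — equivalently, a Poisson-type resummation of (\ref{repkernel2}) — yields $e^{-2\pi ix'(\xi-\xi')}\sum_k e^{2\pi ik(\nu-\xi')}V_gg(x-x'-k,\xi-\xi')$, the absolute convergence and the legitimacy of the rearrangement being exactly where $g\in\mathcal{S}_0(\mathbb{R})$ enters, through the $W(\mathbb{R})$-estimate already used in the proof of Proposition \ref{Proposition0}. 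Finally, (5) is the covariance of the STFT under integer translations together with (\ref{Period}): writing $\Sigma_\nu f=\sum_k e^{2\pi ik\nu}T_kf$, which converges in $\mathcal{S}_0(\mathbb{R})$-norm for $f\in\mathcal{S}_0(\mathbb{R})$ so that $V_g$ may be applied termwise, and computing $V_g(T_kf)(x,\xi)=\int f(s)\overline{g(s-(x-k))}e^{-2\pi i\xi(s+k)}\,ds=e^{-2\pi ik\xi}V_gf(x-k,\xi)$, one gets $V_g(\Sigma_\nu f)(x,\xi)=\sum_k e^{2\pi ik\nu}e^{-2\pi ik\xi}V_gf(x-k,\xi)=\sum_k e^{2\pi ik(\nu-\xi)}V_gf(x-k,\xi)$.

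The main obstacle, to the extent there is one, is purely the convergence bookkeeping: making precise that for general $g\in L^2(\mathbb{R})$ the object $V_g f$ is the $L^2((0,1)\times\mathbb{R})$-limit of $\sum_k a_k V_g(e_{k,\nu})$, so that (1)--(3) are meaningful and survive the limit, and invoking the Wiener-amalgam / $\mathcal{S}_0$ bounds of Proposition \ref{Proposition0} to justify the interchange of summation and integration in (4) and (5). Once this is set up, each item is a change of variables followed by Plancherel.
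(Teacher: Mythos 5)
Your proof is correct and, in substance, covers the same ground as the paper's, but you reorganize the logic in a way worth recording. For part (2) the paper writes $V_{g}f(x,\xi )=\mathcal{F}\bigl(f(\cdot )\bar{g}(\cdot -x)\bigr)(\xi )$, applies Parseval in $\xi $, interchanges the $t$- and $x$-integrations using the periodicity of $f_{1}\bar{f}_{2}$, and finishes with a density argument for general $f_{1},f_{2}$; you instead establish the explicit formula and orthogonality relations of (3) first and deduce (\ref{Moyal}) by expanding $f_{1},f_{2}$ in the basis $\{e_{k,\nu }\}$ and summing. That route is cleaner (no density step), at the price of having to say in what sense $V_{g}f$ is defined for $g\in L^{2}(\mathbb{R})$ and $f\in {L}_{\nu }^{2}(0,1)$ --- which you do, taking it as the $L^{2}((0,1)\times \mathbb{R})$-limit of $\sum_{k}a_{k}V_{g}(e_{k,\nu })$, consistent with (\ref{FourierTF}); the same interpretation also makes your limiting argument for (1) legitimate, whereas the paper's change of variables tacitly assumes pointwise absolute convergence of (\ref{STFT}). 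For (4), the paper obtains (\ref{repkernel1}) by running the inversion formula that follows from (\ref{Moyal}) and then invoking the covariance of the classical kernel (\ref{kernel}); your direct $\mathbb{Z}$-periodization of $K_{g}$ with the phase dictated by (\ref{almostperiodic}) is the same computation read in the other direction, and in both versions the hypothesis $g\in \mathcal{S}_{0}(\mathbb{R})$ serves only to guarantee absolute convergence of the $k$-sum, while (\ref{repkernel2}) comes from the bilinear sum over (\ref{basis_TF}) exactly as in the paper. Parts (1), (3) and (5) are the same change-of-variable computations as in the paper. One small shared convention issue: with $\langle F,G\rangle =\int F\overline{G}$, the Plancherel step in your version of (2) produces $\langle \mathcal{F}(\bar{g}_{1}),\mathcal{F}(\bar{g}_{2})\rangle =\overline{\langle g_{1},g_{2}\rangle }$, i.e.\ the window factor appears conjugated as in the classical orthogonality relations; the paper's displayed computation is equally cavalier about this conjugate, so this is a bookkeeping matter of the statement rather than a defect of your argument.
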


\begin{proof}
(1) If $f\in {L}_{\nu }^{2}(0,1)$, a change of variable gives%
\begin{equation*}
V_{g}f(x+k,\xi )=\int_{%
\mathbb{R}
}f(t+k)\overline{g(t-x)}e^{-2\pi i\xi (t+k)}dt=e^{2\pi ik(\nu -\xi
)}V_{g}f(x,\xi )\text{,}
\end{equation*}%
using (\ref{functional}).

(2) Write $V_{g}f(x,\xi )=\mathcal{F}(f(.)g(.-x))(\xi )$ and assume that $%
f_{1}$ and $f_{2}$ are continuous, so that $f_{j}(.)\bar{g}_{j}(.-x)\in
L^{2}(\mathbb{R})$. Parseval identity then gives 
\begin{eqnarray*}
\int_{(0,1)\times \mathbb{R}}V_{g_{1}}f_{1}(x,\xi )V_{g_{2}}f_{2}(x,\xi
)dxd\xi &=&\int_{\mathbb{R}}\left( g_{1}(t)\bar{g}_{2}(t)%
\int_{(0,1)}f_{1}(t+x)\bar{f}_{2}(t+x)dx\right) dt \\
&=&\int_{\mathbb{R}}g_{1}(t)\bar{g}_{2}(t)dt\int_{(0,1)}f_{1}(x)\bar{f}%
_{2}(x)dx \\
&=&\left\langle g_{1},g_{2}\right\rangle _{L^{2}(\mathbb{R})}\left\langle
f_{1},f_{2}\right\rangle _{L^{2}(0,1)}\text{,}
\end{eqnarray*}%
using the periodicity of $f_{1}(x)\bar{f}_{2}(x)$ on the second line. This
proves (\ref{Moyal}) for continuous functions. For general $f_{1}$,$f_{2}\in 
{L}_{\nu }^{2}(0,1)$, the formula follows from a standard density argument.

(3) Since $\Vert g\Vert _{L^{2}(\mathbb{R})}^{2}=1$ and $\{e_{k,\nu
}(t)\}_{k\in \mathbb{Z}}$ are orthonormal in ${L}^{2}(0,1)$, Moyal's formula
gives 
\begin{equation*}
\left\langle V_{g}e_{k,\nu },V_{g}e_{m,\nu }\right\rangle
_{L^{2}((0,1)\times \mathbb{R})}=\left\langle e_{k,\nu },e_{m,\nu
}\right\rangle _{L^{2}(0,1)}\Vert g\Vert _{L^{2}(\mathbb{R})}^{2}=\delta
_{k,m}\text{.}
\end{equation*}%
Given that $\{e_{k,\nu }(t)\}_{k\in \mathbb{Z}}$ is an orthogonal basis of ${%
L}_{\nu }^{2}(0,1)$, by definition $\{V_{g}(e_{k,\nu })\}_{k\in \mathbb{Z}}$
spans $\mathcal{H}_{g}^{\nu }\left( \mathbb{R}^{2}/\mathbb{Z}\right) $. We
conclude that $\{V_{g}(e_{k,\nu })\}_{k\in \mathbb{Z}}$ provides an
orthonormal basis for $\mathcal{H}_{g}^{\nu }\left( \mathbb{R}^{2}/\mathbb{Z}%
\right) $. (\ref{basis_TF}) follows from direct computation of $V_{g}$
acting on (\ref{basis0}): 
\begin{equation*}
V_{g}(e_{k,\nu })(x,\xi )=\int_{%
\mathbb{R}
}e^{2\pi it(\nu +k-\xi )}\overline{g(t-x)}dt=e^{2i\pi x(\nu +k-\xi )}%
\mathcal{F}(\bar{g})(\xi -\nu -k)\text{.}
\end{equation*}

(4) From (\ref{Moyal}) we easily derive the following inversion formula for
the STFT on ${L}_{\nu }^{2}(0,1)$, valid in a weak sense: 
\begin{equation*}
f=\int_{(0,1)\times \mathbb{R}}V_{g}f(x,\xi )\Sigma _{\nu }\pi (x,\xi
)gdxd\xi \text{.}
\end{equation*}%
Since $g\in \mathcal{S}_{0}(\mathbb{R})$, then by Proposition \ref%
{Proposition0}, $\Sigma _{\nu }g$ and $\Sigma _{\nu }\mathcal{F}(g)$\ are
well-defined. Thus, if $f\in {L}_{\nu }^{2}(0,1)$, then 
\begin{eqnarray*}
V_{g}f(x,\xi ) &=&\int_{(0,1)\times \mathbb{R}}V_{g}f(x^{\prime },\xi
^{\prime })V_{g}\left( \Sigma _{\nu }(\pi (x^{\prime },\xi ^{\prime
})g)\right) (z)dx^{\prime }d\xi ^{\prime } \\
&=&\int_{(0,1)\times \mathbb{R}}V_{g}f(x^{\prime },\xi ^{\prime })\left(
\sum_{k\in \mathbb{Z}}e^{-2\pi ik(\xi -\nu )}K_{g}((x,\xi ),(x^{\prime
}-k,\xi ^{\prime }))\right) dx^{\prime }d\xi ^{\prime }\text{.}
\end{eqnarray*}%
Using the invariance propriety of $K_{g}$ (this can be directly verified
from (\ref{kernel})): 
\begin{equation*}
K_{g}((x+w,\xi ),(x^{\prime }+w,\xi ^{\prime }))=e^{-2i\pi \Re (w)(\xi -\xi
^{\prime })}K_{g}((x,\xi ),(x^{\prime },\xi ^{\prime }))
\end{equation*}%
we conclude that the reproducing kernel of $\mathcal{H}_{g}^{\nu }\left( 
\mathbb{R}^{2}/\mathbb{Z}\right) $ is given by (\ref{repkernel1}). The
alternative form (\ref{repkernel2}) is obtained, after some simplification,
from the representation of the reproducing kernel in terms of the bilinear
sum of its basis functions (\ref{basis_TF}):%
\begin{equation*}
K_{g}^{\nu }((x,\xi ),(x^{\prime },\xi ^{\prime }))=\sum_{k\in \mathbb{Z}%
}e^{2i\pi x(\nu +k-\xi )}\mathcal{F}(\bar{g})(\xi -\nu -k)\overline{e^{2i\pi
x^{\prime }(\nu +k-\xi ^{\prime })}\mathcal{F}(\bar{g})(\xi ^{\prime }-\nu
-k)}\text{.}
\end{equation*}%
(5) First, we observe that, since $f\in \mathcal{S}_{0}(\mathbb{R})$, then
by Proposition \ref{Proposition0}, $\Sigma _{\nu }f\in {L}_{\nu }^{2}(0,1)$
and since $g\in \mathcal{S}_{0}(\mathbb{R})$, $(\Sigma _{\nu }f)(.)\bar{g}%
_{j}(.-x)\in L^{2}(\mathbb{R})$. Then,%
\begin{eqnarray*}
V_{g}(\Sigma _{\nu }f)(x,\xi ) &=&\sum_{k\in \mathbb{Z}}e^{2\pi ik\nu }\int_{%
\mathbb{R}
^{d}}f(t-k)\overline{g(t-x)}e^{-2\pi i\xi t}dt \\
&=&\sum_{k\in \mathbb{Z}}e^{2\pi ik(\nu -\xi )}V_{g}f(x-k,\xi )\text{.}
\end{eqnarray*}
\end{proof}

\subsection{Notational remark}

In the next sections we will often deal with spaces analytic or polyanalytic
in the variable $z=x+i\xi \in \mathbb{C}$. With this in mind, we will
shorten the notations in several formulas, by identifying pairs of
coordinates with complex numbers in the following way: 
\begin{equation*}
z=x+i\xi \equiv (x,\xi )\text{.}
\end{equation*}%
This materializes the identification $\mathbb{C\equiv R}^{2}$ and supports
the notational conventions 
\begin{equation*}
V_{g}f(z)=V_{g}f(x+i\xi )\equiv V_{g}f(x,\xi )\text{,}
\end{equation*}%
\begin{equation*}
K_{g}^{\nu }(z,w)=K_{g}^{\nu }(x+i\xi ,x^{\prime }+i\xi ^{\prime })\equiv
K_{g}^{\nu }((x,\xi ),(x^{\prime },\xi ^{\prime }))
\end{equation*}%
and%
\begin{equation*}
\mathcal{H}_{g}^{\nu }\left( \mathbb{C}/\mathbb{Z}\right) \equiv \mathcal{H}%
_{g}^{\nu }\left( \mathbb{R}^{2}/\mathbb{Z}\right) =V_{g}\left( {L}_{\nu
}^{2}(0,1)\right) \text{.}
\end{equation*}%
With this notations, the horizontal translation of a function $F(z=x+i\xi
)\equiv F(x,\xi )$, by $k\in \mathbb{Z}$, will often be written as%
\begin{equation*}
F(z+k)=F(x+k+i\xi )\equiv F(x+k,\xi )\text{.}
\end{equation*}

\subsection{Examples}

Proposition \ref{Proposition1} can be used for explicit computations of the
basis functions and the reproducing kernels. We will now provide examples
when the window is chosen among the Gaussian and the whole family of Hermite
functions, normalized such that $\Vert h_{r}\Vert _{L^{2}(\mathbb{R})}^{2}=1$%
, explicitly given as 
\begin{equation*}
h_{r}(t)=\frac{2^{1/4-r}(-\pi )^{\frac{r}{2}}}{\sqrt{r!}}e^{\pi
t^{2}}\partial _{r}(e^{-2\pi t^{2}}),\qquad r=0,1,...\text{.}
\end{equation*}%
The examples are related to some of the analytic and polyanalytic Fock
spaces spaces on $\mathbb{C}/\mathbb{Z}$ that will be considered in Section
3. The STFT spaces with Hermite windows $\mathcal{H}_{h_{r}}^{\nu }\left( 
\mathbb{C}\right) $ contain reproducing kernels explicitly given as 
\begin{equation}
K_{h_{r}}(z,w)=e^{-2\pi ix^{\prime }(\xi -\xi ^{\prime
})}V_{h_{r}}h_{r}\left( z-w\right) =e^{i\pi (x\xi -x^{\prime }\xi ^{\prime
})}e^{-\frac{\pi }{2}(\left\vert z\right\vert ^{2}+\left\vert w\right\vert
^{2})}L_{r}(\pi \left\vert z-w\right\vert ^{2})e^{\pi \overline{z}w}\text{,}
\label{kernelHermite}
\end{equation}
where $L_{r}=L_{r}^{(0)}$\ stands for the Laguerre polynomials, defined for
general parameter by the formula%
\begin{equation*}
L_{k}^{\alpha }(x)=\sum\limits_{i=0}^{k}(-1)^{i}\binom{k+\alpha }{k-i}\frac{%
x^{i}}{i!}\text{.}
\end{equation*}

\begin{example}
\label{Example0}Since $h_{0}=2^{\frac{1}{4}}e^{-\pi t^{2}}$ is invariant
under the Fourier transform, Proposition \ref{Proposition1} (3) tells us
that the basis functions of $\mathcal{H}_{h_{0}}^{\nu }\left( \mathbb{C}/%
\mathbb{Z}\right) $ are 
\begin{equation}
V_{h_{0}}(e_{k,\nu })(x,\xi )=e^{2i\pi x(\nu +k-\xi )-\pi (\nu +k-\xi )^{2}}%
\text{.}  \label{basis}
\end{equation}%
Thus, using Proposition \ref{Proposition1} (4), the reproducing kernel is%
\begin{equation*}
K_{h_{0}}^{\nu }(z,w)=e^{i\pi (x\xi -x^{\prime }\xi ^{\prime })}\sum_{k\in 
\mathbb{Z}}e^{2i\pi (\nu +k)(x-x^{\prime })}e^{-\pi \lbrack (\nu +k-\xi
)^{2}+(\nu +k-\xi ^{\prime })^{2}]}
\end{equation*}%
alternatively, using (\ref{repkernel1}), 
\begin{equation*}
K_{h_{0}}^{\nu }(z,w)=\sum_{k\in \mathbb{Z}}e^{2\pi ik\nu }e^{-2\pi i\xi
^{\prime }k}K_{h_{0}}(z,w+k)\text{.}
\end{equation*}%
Inserting the expression (\ref{kernelGaussian}) for $K_{h_{0}}(z,w)$, this
leads to 
\begin{equation}
K_{h_{0}}^{\nu }(z,w)=e^{i\pi (x\xi -x^{\prime }\xi ^{\prime })}e^{-\frac{%
\pi }{2}(\left\vert z\right\vert ^{2}+\left\vert w\right\vert ^{2})}e^{\pi z%
\overline{w}}\sum_{k\in \mathbb{Z}}e^{-2\pi ik\nu }e^{-\pi \overline{w}k+\pi
kz-\frac{\pi }{2}k^{2}}\text{.}  \label{RK0}
\end{equation}
\end{example}

\begin{example}
\label{Example1}Since $\mathcal{F}(h_{r})=i^{r}h_{r}$, the basis functions
of $\mathcal{H}_{h_{r}}^{\nu }\left( \mathbb{C}/\mathbb{Z}\right) $ are 
\begin{equation*}
V_{h_{r}}(e_{k,\nu })(x,\xi )=e^{2i\pi x(\nu +k-\xi )}i^{r}h_{r}(\xi -\nu -k)%
\text{%
\'{}%
.}
\end{equation*}%
From\ Proposition \ref{Proposition1} (4),%
\begin{equation*}
K_{h_{r}}^{\nu }(z,w)=e^{i\pi (x\xi -x^{\prime }\xi ^{\prime })}\sum_{k\in 
\mathbb{Z}}(-1)^{r}e^{2i\pi (\nu +k)(x-x^{\prime })}h_{r}(\xi -\nu
-k)h_{r}(\xi ^{\prime }-\nu -k)\text{,}
\end{equation*}%
or, using (\ref{repkernel1}),%
\begin{equation*}
K_{h_{r}}^{\nu }(z,w)=\sum_{k\in \mathbb{Z}}e^{2\pi ik\nu }e^{-2\pi i\xi
^{\prime }k}K_{h_{r}}(z,w+k)\text{.}
\end{equation*}%
Inserting (\ref{kernelHermite}), 
\begin{equation}
K_{h_{r}}^{\nu }(z,w)=e^{i\pi (x\xi -x^{\prime }\xi ^{\prime })}e^{-\frac{%
\pi }{2}(\left\vert z\right\vert ^{2}+\left\vert w\right\vert ^{2})}e^{\pi 
\overline{w}z}\sum_{k\in \mathbb{Z}}e^{2\pi ik\nu }e^{\pi (\overline{w}%
+z)k}e^{-\frac{\pi }{2}k^{2}}L_{r}(\pi \left\vert z-w-k\right\vert ^{2})%
\text{.}  \label{RKr}
\end{equation}
\end{example}

\section{Fock-type spaces on $\mathbb{C}/\mathbb{Z}$}

In this section we will consider subspaces of $L^{2}(\mathbb{C})$, $L^{2}(%
\mathbb{C},e^{-\pi |z|^{2}})$, $L^{2}(\mathbb{C}/\mathbb{Z})$ and $L^{2}(%
\mathbb{C}/\mathbb{Z},e^{-\pi |z|^{2}})$ and define isometric isomorphisms%
\begin{equation}
M:L^{2}(\mathbb{C}/\mathbb{Z})\rightarrow L^{2}(\mathbb{C}/\mathbb{Z}%
,e^{-\pi |z|^{2}})\text{, and\ \ }M:L^{2}(\mathbb{C})\rightarrow L^{2}(%
\mathbb{C},e^{-\pi |z|^{2}})\text{,}  \label{M}
\end{equation}%
where $(MF)(z)=M(z)F(z)$, $M(z)=e^{-i\pi x\xi +\frac{\pi }{2}\left\vert
z\right\vert ^{2}}$.

\subsection{The Bargmann-Fock space on $\mathbb{C}/\mathbb{Z}$}

Take $g(t)=h_{0}=2^{\frac{1}{4}}e^{-\pi t^{2}}$ so that the classical
Bargmann transform 
\begin{equation}
\mathcal{B}f(z)=e^{-\frac{\pi }{2}z^{2}}\int_{\mathbb{R}}f(t)e^{2\pi tz-\pi
t^{2}}dt\text{,}  \label{BarDef0}
\end{equation}%
can be written as 
\begin{equation}
\mathcal{B}f(z)=e^{-i\pi x\xi +\frac{\pi }{2}\left\vert z\right\vert
^{2}}V_{h_{0}}f(x,-\xi )=e^{-i\pi x\xi +\frac{\pi }{2}\left\vert
z\right\vert ^{2}}V_{h_{0}}f(\overline{z})\text{,}\ z=x+i\xi \text{.}
\label{BargDef}
\end{equation}%
Let $\mathcal{F}\left( \mathbb{C}\right) $ be the Fock space of entire
functions $F(z)$ such that%
\begin{equation*}
\int_{\mathbb{C}}|F(z)|^{2}e^{-\pi \left\vert z\right\vert ^{2}}dz<\infty 
\text{.}
\end{equation*}%
Then $\mathcal{B}:{L}^{2}(\mathbb{R})\rightarrow \mathcal{F}(\mathbb{C})$ is
an unitary isomorphism \cite[Theorem 3.4.3]{Charly}, so that $\mathcal{F}%
\left( \mathbb{C}\right) $ can alternately be defined as 
\begin{equation*}
\mathcal{F}\left( \mathbb{C}\right) =\mathcal{B}\left( {L}^{2}(\mathbb{R}%
)\right) =MV_{h_{0}}\left( {L}^{2}(\mathbb{R})\right) \text{.}
\end{equation*}%
Since $h_{0}\in {S}_{0}(\mathbb{R})$, then $V_{h_{0}}f\in L_{1}(\mathbb{C}%
)\subset L_{2}(\mathbb{C})$ the kernel of the Bargmann transform $t\mapsto
A(z,t):=e^{2\pi tz-\pi t^{2}-\frac{\pi }{2}z^{2}}$ is rapidly decreasing, it
belongs to $\mathcal{S(\mathbb{R})}$. This allows to interpret the brackets 
\begin{equation*}
\mathcal{B}f(z)=\left\langle A(z,.),f\right\rangle \text{, }f\in \mathcal{S}%
^{\prime }(\mathbb{R})
\end{equation*}%
as a duality pairing. Since this definition also holds for pairs of test
function and distribution spaces, like the Schwartz space and tempered
distributions $\mathcal{S(\mathbb{R})}$, $\mathcal{S}^{\prime }(\mathbb{R})$~%
\cite{Charly} and the Feichtinger algebra ${S}_{0}(\mathbb{R})$ \cite{Fei}
and its dual ${S}_{0}^{\prime }(\mathbb{R})$, we see that is well defined
for $f\in {L}_{\nu }^{2}(0,1)$. Combining (\ref{BargDef}) with (\ref%
{almostperiodic}), gives%
\begin{equation}
\mathcal{B}f(z+k)=e^{2\pi ik\nu }e^{\pi kz+\frac{\pi }{2}k^{2}}\mathcal{B}%
f(z)\text{.}  \label{invarianceFock}
\end{equation}%
Now consider $\mathcal{F}_{2,\nu }\left( \mathbb{C}/\mathbb{Z}\right) $, the
Fock space of entire functions $F(z)$ such that%
\begin{equation}
\int_{\lbrack 0,1]\times \mathbb{R}}|F(z)|^{2}e^{-\pi \left\vert
z\right\vert ^{2}}dz<\infty  \label{intL2}
\end{equation}%
and satisfying the functional equation 
\begin{equation}
F(z+k)=e^{2\pi ik\nu }e^{\frac{\pi }{2}k^{2}+\pi zk}F(z)  \label{funct-equa}
\end{equation}%
for all $z\in \mathbb{C}$ and $k\in \mathbb{Z}$. Then (\ref{intL2}) is well
defined and independent of the choice of the domain representing the
fundamental region of $\mathbb{C}/\mathbb{Z}$. In particular, it follows
from (\ref{funct-equa}) that $\mathcal{B}f(z)$ satisfy (\ref{funct-equa}).
Alternatively, writing $F(z)=e^{\frac{\pi }{2}z^{2}+2i\pi \nu z}g(z)$, $%
\mathcal{F}_{2,\nu }\left( \mathbb{C}/\mathbb{Z}\right) $, is isomorphic to
the space of functions $g(x,y)$ periodic in the $x$-direction and such that 
\cite[Remark 2.2]{GhanmiIntissar2013}: 
\begin{equation}
\Vert f\Vert _{\mathcal{F}_{2,\nu }(\mathbb{C}/\mathbb{Z})}^{2}=\int_{[0,1]}%
\int_{\mathbb{R}}|g(z)|^{2}e^{-2\pi y^{2}-4\pi \nu y}dxdy\text{.}
\label{normaltern}
\end{equation}%
\ The space $\mathcal{F}_{2,\nu }\left( \mathbb{C}/\mathbb{Z}\right) $ has
an orthogonal basis given by \cite[Theorem 2.3]{GhanmiIntissar2013}: 
\begin{equation}
\varphi _{k,\nu }(z):=e^{\frac{\pi }{2}z^{2}+2i\pi z(\nu +k)-\pi (\nu
+k)^{2}}\text{,}\quad k\in \mathbb{Z}\text{,}  \label{basisF}
\end{equation}%
which is the image of the orthonormal basis $\{e_{k,\nu }(t)=e^{2\pi it(\nu
+k)}\}_{k\in \mathbb{Z}}$ of ${L}_{\nu }^{2}(0,1)$ under $\mathcal{B}$.
Indeed, from (\ref{basis}) and (\ref{BargDef}) we obtain 
\begin{equation*}
\mathcal{B}\left( e_{k,\nu }\right) (z)=e^{-i\pi x\xi +\frac{\pi }{2}%
\left\vert z\right\vert ^{2}}V_{h_{0}}(e_{k,\nu })(x,-\xi )=\varphi _{k,\nu
}(z)\text{.}
\end{equation*}%
Therefore, the classical Bargmann transform is an unitary isomorphism from ${%
L}_{\nu }^{2}(0,1)$ onto the space $\mathcal{F}_{2,\nu }\left( \mathbb{C}/%
\mathbb{Z}\right) $ of holomorphic functions on $\mathbb{C}$ satisfying (\ref%
{funct-equa}). The reproducing kernel of $\mathcal{F}_{2,\nu }\left( \mathbb{%
C}/\mathbb{Z}\right) $ can be written as the normalized bilinear sum of the
basis functions and we are lead to:{%
\begin{equation*}
K_{\mathcal{F}_{2,\nu }\left( \mathbb{C}/\mathbb{Z}\right) }(z,w)=\left(
2\right) ^{1/2}e^{\frac{\pi }{2}(z^{2}+\overline{w}^{2})}\sum_{k\in \mathbb{Z%
}}e^{-2\pi (k+\nu )^{2}+2i\pi (k+\nu )(z-\overline{w})}
\end{equation*}%
which can also be written as}%
\begin{equation*}
K_{\mathcal{F}_{2,\nu }\left( \mathbb{C}/\mathbb{Z}\right) }(z,w)=\left(
2\right) ^{1/2}e^{\frac{\pi }{2}(z^{2}+\overline{w}^{2})}\theta _{\nu
,0}\left( z-\overline{w}\left\vert i\right. \right) \text{,}
\end{equation*}%
{where }$\theta _{\nu ,0}$ is the classical Jacobi theta function. An
alternative expression for the reproducing kernel is the following Poincar%
\'{e} periodization of the kernel of the Fock space of entire functions: 
\begin{equation*}
K_{\mathcal{F}_{2,\nu }\left( \mathbb{C}/\mathbb{Z}\right) }(z,w)=e^{\pi z%
\overline{w}}\sum_{k\in \mathbb{Z}}e^{-2\pi ik\nu }e^{-\pi \overline{w}k+\pi
kz-\frac{\pi }{2}k^{2}}\text{.}
\end{equation*}%
Using (\ref{RK0}), $K_{\mathcal{F}_{2,\nu }\left( \mathbb{C}/\mathbb{Z}%
\right) }(z,w)$ can be related to $K_{h_{0}}^{\nu }(z,w)$ as follows 
\begin{equation*}
K_{h_{0}}^{\nu }(z,w)=\sum_{k\in \mathbb{Z}}e^{2\pi ik\nu }e^{-2\pi i\xi
^{\prime }k}K_{h_{0}}(z,w+k)=e^{i\pi (x\xi -x^{\prime }\xi ^{\prime })}e^{-%
\frac{\pi }{2}(\left\vert z\right\vert ^{2}+\left\vert w\right\vert ^{2})}K_{%
\mathcal{F}_{2,\nu }\left( \mathbb{C}/\mathbb{Z}\right) }(z,w)\text{.}
\end{equation*}

\subsection{True polyanalytic Fock spaces on $\mathbb{C}/\mathbb{Z}$}

True polyanalytic Bargmann-type transforms $\mathcal{B}^{\left( r\right) }$
can be defined \cite{Abr2010} as%
\begin{equation*}
\mathcal{B}^{\left( r\right) }f(z)=\left( \frac{\pi ^{r}}{r!}\right) ^{\frac{%
1}{2}}e^{\pi \left\vert z\right\vert ^{2}}\left( \partial _{z}\right) ^{r}%
\left[ e^{-\pi \left\vert z\right\vert ^{2}}\mathcal{B}f(z)\right] \text{.}
\end{equation*}%
Setting $g=h_{r}$ in the definition of the STFT, we obtain the following
extension of the relation (\ref{BargDef}): 
\begin{equation}
\mathcal{B}^{\left( r\right) }f(z)=e^{-i\pi x\xi +\frac{\pi }{2}\left\vert
z\right\vert ^{2}}V_{h_{r}}f(x,-\xi )\text{,}\ z=x+i\xi \text{.}
\label{trupolydef}
\end{equation}%
The transform $\mathcal{B}^{\left( r\right) }$ defines a sequence of spaces $%
\mathcal{F}_{2}^{\left( r\right) }\left( \mathbb{C}\right) \subset L^{2}(%
\mathbb{C},e^{-\pi |z|^{2}})$: 
\begin{equation*}
\mathcal{F}_{2}^{\left( r\right) }\left( \mathbb{C}\right) =\mathcal{B}%
^{\left( r\right) }\left[ {L}^{2}(\mathbb{R})\right] =MV_{h_{r}}\left[ {L}%
^{2}(\mathbb{R})\right]
\end{equation*}%
with reproducing kernel%
\begin{equation}
K_{\mathcal{F}_{2}^{\left( r\right) }\left( \mathbb{C}\right) }(z,w)=e^{\pi 
\overline{w}z}L_{r}(\pi \left\vert z-w\right\vert ^{2})\text{.}
\label{repTrue}
\end{equation}%
These spaces are of particular importance since they are the eigenspaces of
the Landau Hamiltonian with a constant magnetic field, 
\begin{equation}
L_{z,\overline{z}}:=-\partial _{z}\partial _{\overline{z}}+\pi \overline{z}%
\partial _{\overline{z}}  \label{Landau-hami}
\end{equation}%
associated with the eigenvalue $\pi r$. More precisely \cite{Zouhair,Mouayn},%
\begin{equation*}
\mathcal{F}_{2}^{\left( r\right) }\left( \mathbb{C}\right) =\left\{ F\in
D(L_{z})\subset L^{2}(\mathbb{C},e^{-\pi |z|^{2}}):L_{z}F(z)=\pi
rF(z)\right\} \text{.}
\end{equation*}%
The Landau Hamiltonian is a relevant model for the physics of the Integer
Quantum-Hall effect, with a rich mathematical structure, which, in recent
years, has been explored from a variety of perspectives. See \cite{AbrSpeck}
for double orthogonality and $L_{1}$-minimization problems, \cite%
{HendHaimi,abgrro17} for associated point processes, \cite{AoP} for discrete
coherent states, \cite{Kord} for a Berezin transform approach, \cite{Onofri}
for toric versions, \cite{Ruz} for weak formulations and \cite{IZ2,Ziyat}
for the quasi-tori setting, where the Hamiltonian is the Bochner Laplacian
on sections of the line bundle.

From (\ref{trupolydef}) and (\ref{almostperiodic}), restricting to $f\in {L}%
_{\nu }^{2}(0,1)$, gives%
\begin{equation*}
\mathcal{B}^{\left( r\right) }f(z+k)=e^{2\pi ik\nu }e^{\pi kz+\frac{\pi }{2}%
\left\vert k\right\vert ^{2}}\mathcal{B}^{\left( r\right) }f(z)\text{.}
\end{equation*}%
As a result, the transforms $\mathcal{B}^{\left( r\right) }$ define a
sequence of spaces $\mathcal{F}_{\nu ,r}^{\left( r\right) }\left( \mathbb{C}/%
\mathbb{Z}\right) $, consisting of functions $F$ of the form%
\begin{equation}
F(z)=\left( \frac{\pi ^{r}}{r!}\right) ^{\frac{1}{2}}e^{\pi \left\vert
z\right\vert ^{2}}\left( \partial _{z}\right) ^{r}\left[ e^{-\pi \left\vert
z\right\vert ^{2}}F_{0}(z)\right] \text{,}  \label{TruePolyRepresentation}
\end{equation}%
for some $F_{0}\in \mathcal{F}_{2,\nu }\left( \mathbb{C}/\mathbb{Z}\right) $%
\ (the analytic Fock space of the previous section) and satisfying%
\begin{equation*}
F(z+k)=e^{2\pi ik\nu }e^{\pi kz+\frac{\pi }{2}\left\vert k\right\vert
^{2}}F(z)\text{.}
\end{equation*}%
The spaces $\mathcal{F}_{2,\nu }^{\left( r\right) }\left( \mathbb{C}/\mathbb{%
Z}\right) $ are, up to the multiplier $M$ defined by (\ref{M}) the STFT with
Hermite functions images of ${L}_{\nu }^{2}(0,1)$: 
\begin{equation*}
\mathcal{F}_{2,\nu }^{\left( r\right) }\left( \mathbb{C}/\mathbb{Z}\right) =%
\mathcal{B}^{\left( r\right) }\left[ {L}_{\nu }^{2}(0,1)\right] =M\mathcal{H}%
_{h_{r}}^{\nu }\left( \mathbb{C}/\mathbb{Z}\right) \text{.}
\end{equation*}%
From (\ref{Moyal}), given $f_{1},f_{2}\in {L}_{\nu }^{2}(0,1)$ and $%
r_{1}\neq r_{2}$, 
\begin{equation}
\left\langle V_{h_{r_{1}}}f_{1},V_{h_{r_{2}}}f_{2}\right\rangle
_{L^{2}((0,1)\times \mathbb{R})}=\left\langle f_{1},f_{2}\right\rangle
_{L^{2}(0,1)}\left\langle h_{r_{1}},h_{r_{2}}\right\rangle _{L^{2}(\mathbb{R}%
)}=0\text{.}  \label{ortTruepoly}
\end{equation}%
Therefore, the spaces $\mathcal{H}_{h_{r}}^{\nu }\left( \mathbb{C}/\mathbb{Z}%
\right) $ are orthogonal in $L^{2}(\mathbb{C}/\mathbb{Z})=L^{2}((0,1)\times 
\mathbb{R})$ and, consequently, $\mathcal{F}_{2,\nu }^{\left( r\right)
}\left( \mathbb{C}/\mathbb{Z}\right) $ are orthogonal in $L^{2}(\mathbb{C}/%
\mathbb{Z},e^{-\pi |z|^{2}})$ for different values of $r$. From (\ref{RKr}),
the reproducing kernel of $\mathcal{H}_{h_{r}}^{\nu }\left( \mathbb{C}/%
\mathbb{Z}\right) $ is 
\begin{equation*}
K_{h_{r}}^{\nu }(\overline{z},\overline{w})=e^{i\pi (x\xi -x^{\prime }\xi
^{\prime })}e^{-\frac{\pi }{2}(\left\vert z\right\vert ^{2}+\left\vert
w\right\vert ^{2})}e^{\pi \overline{w}z}\sum_{k\in \mathbb{Z}}e^{2\pi ik\nu
}e^{\pi (\overline{w}+z)k}e^{-\frac{\pi }{2}k^{2}}L_{r}(\pi \left\vert
z-w-k\right\vert ^{2})\text{.}
\end{equation*}%
As a consequence, the reproducing kernel of $\mathcal{F}_{\nu }^{(r)}\left( 
\mathbb{C}/\mathbb{Z}\right) $ is given as 
\begin{equation*}
K_{\mathcal{F}_{\nu }^{(r)}\left( \mathbb{C}/\mathbb{Z}\right) }(z,w)=M(z)M(%
\overline{w})K_{h_{r}}^{\nu }(\overline{z},\overline{w})=e^{\pi \overline{w}%
z}\sum_{k\in \mathbb{Z}}e^{2\pi ik\nu }e^{\pi (\overline{w}+z)k}e^{-\frac{%
\pi }{2}k^{2}}L_{r}(\pi \left\vert z-w-k\right\vert ^{2})\text{.}
\end{equation*}%
Denote the basis functions of ${L}_{\nu }^{2}(0,1)$ by $f_{k}(t)=e^{2\pi
it(\nu +k)}$.\ Their transforms by $\mathcal{B}^{\left( r\right) }$ follow
from (\ref{basis_TF}), and%
\begin{equation*}
\varphi _{k}^{(r)}(z)=\mathcal{B}^{\left( r\right) }\left( f_{k}\right)
=e^{-i\pi x\xi +\frac{\pi }{2}\left\vert z\right\vert
^{2}}V_{h_{r}}(f_{k})(x,-\xi )=e^{-i\pi x\xi +\frac{\pi }{2}\left\vert
z\right\vert ^{2}}e^{2i\pi x(\nu +n+\xi )}i^{r}h_{r}(-\xi -\nu -n)\text{,}
\end{equation*}%
provide an orthogonal basis of $\mathcal{F}_{2,\nu }^{\left( r\right)
}\left( \mathbb{C}/\mathbb{Z}\right) $. By definition, they are complete in $%
\mathcal{F}_{2,\nu }^{\left( r\right) }\left( \mathbb{C}/\mathbb{Z}\right) $%
. Their orthogonality in $r$ and $k$ follows from (\ref{Moyal}). For $%
r_{1}\neq r_{2}$ or $k\neq m$, it follows from (\ref{ortTruepoly}) that 
\begin{equation}
\left\langle \varphi _{k}^{(r_{1})}(z),\varphi
_{m}^{(r_{2})}(z)\right\rangle _{L^{2}((0,1)\times \mathbb{R},e^{-\pi
|z|^{2}})}=\left\langle V_{h_{r_{1}}}f_{k},V_{h_{r_{2}}}f_{m}\right\rangle
_{L^{2}((0,1)\times \mathbb{R})}=0\text{.}  \label{doubleindexort}
\end{equation}

\begin{remark}
These functions also show up in (23) of Theorem 4.1 in \cite{IZ1}.
\end{remark}

\subsection{The complex periodization operator}

Despite this paper dealing essentially with the Hilbert spaces, we will need
to consider some Banach Fock spaces in hogher Landau levels of $\mathbb{C}/%
\mathbb{Z}$ for our proofs. Banach Fock spaces in higher Landau levels in $%
\mathbb{C}$ have been considered in \cite{AbrGr2012}, in connection with
Banach frames with Hermite functions. We are currently working on a similar
treatment in $\mathbb{C}/\mathbb{Z}$ \cite{AZLBanach}, but in this paper we
will only introduce and prove the essential properties required for the
proof of our main results.

We start defining $p$-versions of the space $\mathcal{F}_{2,\nu }(\mathbb{C}/%
\mathbb{Z})$, for\ $p\in \lbrack 1,\infty \lbrack $ as the spaces of
analytic functions on $\mathbb{C}\ $with the norm%
\begin{equation}
\left\Vert F\right\Vert _{\mathcal{F}_{p,\nu }(\mathbb{C}/\mathbb{Z}%
)}=\int_{(0,1)\times \mathbb{R}}|F(z)|^{p}e^{-\pi \frac{p}{2}|z|^{2}}dz\text{%
.}  \label{integral}
\end{equation}%
satisfying 
\begin{equation}
F(z+k)=e^{2\pi ik\nu }e^{\frac{\pi }{2}|k|^{2}+\pi zk}F(z)\text{.}
\label{functv}
\end{equation}%
This implies%
\begin{equation}
|F(z-k)|^{p}e^{-\pi \frac{p}{2}|z-k|^{2}}=|F(z)|^{p}e^{-\pi \frac{p}{2}%
|z|^{2}}\text{.}  \label{functp}
\end{equation}%
The $\mathbb{Z}$-periodicity\ of $|F(z)|^{p}e^{-\pi \frac{p}{2}|z|^{2}}$
assures that the integral (\ref{integral}) makes sense and is independent of
the fundamental domain representing $\mathbb{C}/\mathbb{Z}$. For a Weyl
translation of $F\in \mathcal{F}_{p,\nu }(\mathbb{C}/\mathbb{Z})$ to be
well-defined, the imaginary part of the translation parameter $w$ must be
integer: $w\in \mathbb{R+}i\mathbb{Z}$. Thus,%
\begin{equation*}
\tau (w)F(z)=e^{\pi z\overline{w}-\frac{\pi }{2}\left\vert w\right\vert
^{2}}F(z-w)\text{, }\Im w\in \mathbb{Z}\text{.}
\end{equation*}%
Then, $\tau (w)F(z)$ satisfies (\ref{funct-equa}):%
\begin{equation*}
\tau (w)F(z-k)=e^{2\pi ik\nu }e^{\pi kz+\frac{\pi }{2}\left\vert
k\right\vert ^{2}}\tau (w)F(z)\text{,}
\end{equation*}%
thus%
\begin{equation*}
\left\vert \tau (w)F(z-k)\right\vert ^{p}e^{-\pi \frac{p}{2}%
|z-k|^{2}}=\left\vert F(z-w-k)\right\vert ^{p}e^{-\pi \frac{p}{2}%
|z-w-k|^{2}}=\left\vert \tau (w)F(z)\right\vert ^{p}e^{-\pi \frac{p}{2}%
|z|^{2}}\text{.}
\end{equation*}%
and $\tau (w)F(z)$ is entire in $z$ and $\mathbb{Z}$-periodic. Therefore, $%
\tau (w)F(z)\in \mathcal{F}_{p,\nu }(\mathbb{C}/\mathbb{Z})$ and the spaces $%
\mathcal{F}_{p,\nu }(\mathbb{C}/\mathbb{Z})$ are invariant under the Weyl
translation. The following operator is a `phase space' version of the
operator (\ref{Period}), in the sense that it can be obtained from the
latter by the action of the Bargmann transform. One can generate elements of 
$\mathcal{F}_{1,\nu }(\mathbb{C}/\mathbb{Z})$ by periodizing functions in $%
\mathcal{F}_{1}(\mathbb{C})$. On the other hand, the operator can also be
used to show the existence of a function in $\mathcal{F}_{1}(\mathbb{C})$,
whose periodization is a given function belonging to $\mathcal{F}_{1,\nu }(%
\mathbb{C}/\mathbb{Z})$. This route will be a key step in the proof of
Theorem \ref{FrameHermite}.

\begin{lemma}
\label{ComplexPeriodization}Let $\Gamma ^{\prime }=i\frac{1}{\beta }\mathbb{Z%
}$. Then the periodization operator 
\begin{equation}
\mathcal{P}_{\Gamma ^{\prime },l}(F)(z):=\sum_{\gamma \in \Gamma ^{\prime
}}\chi _{l}(\gamma )e^{-\pi z\overline{\gamma }-\frac{\pi }{2}|\gamma
|^{2}}F(z+\gamma )  \label{per}
\end{equation}%
is bounded and onto from $\mathcal{F}_{1}(\mathbb{C})$ into $\mathcal{F}%
_{1,l}(\mathbb{C}/\Gamma ^{\prime })$.
\end{lemma}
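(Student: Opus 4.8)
The plan is to prove the two assertions—boundedness and surjectivity—separately, modeling the argument on the corresponding properties of the ``signal-side'' periodization operator $\Sigma_\nu$ from Proposition \ref{Proposition0}, transported to phase space via the Bargmann transform. For \textbf{boundedness}, I would first check that each summand in \eqref{per} belongs to $\mathcal{F}_1(\mathbb{C})$: the factor $e^{-\pi z\overline{\gamma}-\frac{\pi}{2}|\gamma|^2}F(z+\gamma)$ is exactly the Weyl translate $\tau(-\gamma)F$ (up to the character $\chi_l(\gamma)$), which is entire, and the key point is that the weighted modulus $|\tau(-\gamma)F(z)|\,e^{-\frac{\pi}{2}|z|^2}$ equals $|F(z+\gamma)|\,e^{-\frac{\pi}{2}|z+\gamma|^2}$, i.e. a \emph{translate} of the weighted modulus of $F$. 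Since $\Gamma'=i\frac{1}{\beta}\mathbb{Z}$ is a discrete subgroup with fundamental domain $[0,1)\times[0,\frac{1}{\beta})$ (or rather a horizontal strip of height $\frac{1}{\beta}$), summing over $\gamma\in\Gamma'$ the $L^1$-norms over the fundamental domain of $\mathcal{F}_{1,l}(\mathbb{C}/\Gamma')$ ``unfolds'' to the integral of $|F(z)|\,e^{-\frac{\pi}{2}|z|^2}$ over all of $\mathbb{C}$, giving $\|\mathcal{P}_{\Gamma',l}F\|_{\mathcal{F}_{1,l}(\mathbb{C}/\Gamma')}\le \|F\|_{\mathcal{F}_1(\mathbb{C})}$ by the triangle inequality and Tonelli. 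I must also verify that $\mathcal{P}_{\Gamma',l}F$ satisfies the correct quasi-periodicity \eqref{functv} with respect to $\Gamma'$—this is a direct computation using the cocycle identity for the factors $e^{-\pi z\overline{\gamma}-\frac{\pi}{2}|\gamma|^2}$ and the character property $\chi_l(\gamma+\gamma')=\chi_l(\gamma)\chi_l(\gamma')$—and that the sum converges locally uniformly so the limit is entire.

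For \textbf{surjectivity}, the strategy I would use is the standard ``choose a good fundamental cut-off'' argument: given $G\in\mathcal{F}_{1,l}(\mathbb{C}/\Gamma')$, I want $F\in\mathcal{F}_1(\mathbb{C})$ with $\mathcal{P}_{\Gamma',l}F=G$. One natural route is to transport the problem back to the signal side via the Bargmann transform $\mathcal{B}$: by \eqref{BargDef} and the remark preceding the lemma, $\mathcal{P}_{\Gamma',l}$ is (conjugate to) the operator $\Sigma$ periodizing over $\Gamma'$ acting on $\mathcal{S}_0$-type signals, and Proposition \ref{Proposition0} already gives a periodization operator with dense range; here I need \emph{exact} surjectivity onto the $L^1$-space, which is stronger. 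Alternatively, and more robustly, I would argue directly in the Fock picture: pick a smooth compactly supported (in the $\Im z$ direction) partition-of-unity weight $\psi$ on $\mathbb{C}$, adapted to the strip fundamental domain of $\Gamma'$, with $\sum_{\gamma\in\Gamma'}\psi(z+\gamma)\equiv 1$, but the obstruction is that multiplying an entire $G$ by $\psi$ destroys analyticity. The correct fix is the $\bar\partial$-surgery / Hörmander-type correction: set $F_0 = \psi\cdot(\text{lift of }G)$, compute $\bar\partial F_0$, solve a weighted $\bar\partial$-equation on $\mathbb{C}$ with the Gaussian weight to obtain a correction $u$ with $\bar\partial u = \bar\partial F_0$, and put $F = F_0 - u$; one then checks $\mathcal{P}_{\Gamma',l}F = G$ because the correction, being $\Gamma'$-equivariant by uniqueness of the minimal solution, periodizes to $0$.

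The \textbf{main obstacle} I anticipate is precisely establishing surjectivity with the right target space $\mathcal{F}_{1,l}(\mathbb{C}/\Gamma')$ rather than merely dense range: the $\bar\partial$-approach requires $L^1$ (not $L^2$) estimates for the solution operator, which are not the classical Hörmander estimates and need either an explicit solution kernel (Cauchy-type integral against the Fock reproducing kernel) or an interpolation/atomic-decomposition argument. I would therefore lean toward the softer alternative: show that $\mathcal{P}_{\Gamma',l}$ has closed range (from the boundedness estimate being in fact a near-isometry on a complemented subspace, using that the summands for distinct $\gamma$ have ``almost disjoint'' weighted supports up to Gaussian tails) together with dense range inherited from Proposition \ref{Proposition0} via $\mathcal{B}$, and conclude $\mathcal{P}_{\Gamma',l}$ is onto. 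The bookkeeping with the character $\chi_l$ and the quasi-periodicity exponents is routine but must be done carefully so that the periodized function lands in $\mathcal{F}_{1,l}(\mathbb{C}/\Gamma')$ with the exponent $l$ matching the character in \eqref{per}.
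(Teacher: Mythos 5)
Your boundedness argument (unfolding the integral over the fundamental domain of $\Gamma'$ into an integral over $\mathbb{C}$, after observing that the weighted modulus of each summand is a translate of $|F|e^{-\frac{\pi}{2}|z|^2}$) is correct and is essentially what the paper means when it says the operator is bounded with constant at most one. The problem is surjectivity, which is the real content of the lemma, and neither of your two proposed routes closes the gap. Route (a), the $\bar\partial$-surgery, founders exactly where you say it does ($L^1$ rather than $L^2$ estimates for the $\bar\partial$-solution), and in addition the claim that a $\Gamma'$-equivariant correction ``periodizes to $0$'' is not right: a genuinely $\Gamma'$-equivariant function would periodize to a divergent sum of equal terms, not to zero. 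Route (b) has two unproved steps. First, dense range cannot simply be ``inherited from Proposition \ref{Proposition0} via $\mathcal{B}$'': that proposition gives density in the $L^2_\nu(0,1)$ (equivalently $\mathcal{F}_{2}$) norm, whereas you need density in the $\mathcal{F}_{1,l}(\mathbb{C}/\Gamma')$ norm, and since $\mathcal{F}_1\subset\mathcal{F}_2$ here, $L^2$-density is the weaker statement. Second, the closed-range claim is unsupported: $\mathcal{P}_{\Gamma',l}$ has an enormous kernel, so it is not a near-isometry, and you never identify the complemented subspace on which it would be one; ``almost disjoint weighted supports up to Gaussian tails'' gives an upper bound, not the lower bound needed for closed range.

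The paper's actual argument is more elementary and explicit, and it is the idea you are missing. After rescaling to $\Gamma=\mathbb{Z}$, one exhibits explicit preimages: with $\varphi_k(x)=e^{2i\pi kx+\pi k^2}\chi_{[0,1]}(x)$ and $f_k=\mathcal{B}\varphi_k\in\mathcal{F}_1(\mathbb{C})$, a direct computation gives $\mathcal{P}_\Gamma f_k(z)=e^{\frac{\pi}{2}z^2+2i\pi kz}$. These images form the natural ``theta--Fourier'' basis of the target space, and the decisive point is a quantitative coefficient bound: writing any $F\in\mathcal{F}_1(\mathbb{C}/\mathbb{Z})$ through the $\mathcal{F}_2$ reproducing kernel as $F=\sqrt{2}\sum_n a_n e^{\frac{\pi}{2}z^2+2i\pi nz}$, one gets $|a_n|\le e^{-\pi n^2}\Vert F\Vert_{\mathcal{F}_1(\mathbb{C}/\mathbb{Z})}$, so the partial sums converge to $F$ \emph{in the $\mathcal{F}_1$ norm} and the corresponding combination of the explicit preimages $f_n$ converges in $\mathcal{F}_1(\mathbb{C})$. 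This Gaussian decay of the coefficients is what substitutes for both your ``dense range'' and ``closed range'' steps, and it is obtained by a computation, not by soft functional analysis. Without some version of this (or a genuinely worked-out $L^1$ $\bar\partial$-estimate), your proposal does not prove surjectivity.
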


\begin{proof}
This is proved in \cite{E-G-I-Z}, but since the paper has not yet been
published, we give a direct proof. Namely, it is easy to see that this
operator is bounded with constant bound less than one. Furthermore, by
considering the change of coordinate $z=\frac{i}{\beta }w$, we can consider $%
\mathcal{P}_{\Gamma }:\mathcal{F}_{1}(\mathbb{C})\rightarrow \mathcal{F}_{1}(%
\mathbb{C}/\Gamma )$; with $\Gamma =\mathbb{Z}$ and $\nu =0$. We consider
the functions 
\begin{equation*}
\varphi _{k}(x)=e^{2i\pi kx+\pi k^{2}}\chi _{\lbrack 0,1]},\quad k\in 
\mathbb{Z}\text{.}
\end{equation*}%
Then the family of functions $f_{k}(z)=\mathcal{B}\varphi _{k}(z)$ belongs
to the space $\mathcal{F}_{1}(\mathbb{C})$. By simple computation we obtain 
\begin{equation}
\mathcal{P}_{\Gamma }f_{k}(z)=e^{\frac{\pi }{2}z^{2}+2i\pi kz}\text{.}
\end{equation}%
Thus, it remains to show that the family of functions $e^{\frac{\pi }{2}%
z^{2}+2i\pi kz}$ is dense in $\mathcal{F}_{1}(\mathbb{C}/\mathbb{Z})$. Since 
$\mathcal{F}_{1}(\mathbb{C}/\mathbb{Z})\subset \mathcal{F}_{2}(\mathbb{C}/%
\mathbb{Z})$ then by using the reproducing kernel representation, we get for 
$F\in \mathcal{F}_{1}(\mathbb{C}/\mathbb{Z})$ 
\begin{equation*}
F(z)=\left\langle F,\overline{K_{\mathcal{F}_{2}\left( \mathbb{C}/\mathbb{Z}%
\right) }(z,w)}\right\rangle _{\mathcal{F}_{2}(\mathbb{C}/\mathbb{Z}%
)}=\left( 2\right) ^{1/2}\sum_{n\in \mathbb{Z}}a_{n}e^{\frac{\pi }{2}%
z^{2}+2i\pi nz}\text{,}
\end{equation*}%
with $a_{n}=e^{-2\pi n^{2}}\left\langle F(w),e^{\frac{\pi }{2}w^{2}+2i\pi
nw}\right\rangle _{\mathcal{F}_{2}(\mathbb{C}/\mathbb{Z})}$. Note that 
\begin{equation*}
\left\vert a_{n}\right\vert \leq e^{-\pi n^{2}}\Vert F\Vert _{\mathcal{F}%
_{1}(\mathbb{C}/\Gamma )}\text{.}
\end{equation*}%
Thus, setting 
\begin{equation*}
P_{N}(z)=\left( 2\right) ^{1/2}\sum_{|n|\leq N}a_{n}e^{\frac{\pi }{2}%
z^{2}+2i\pi kz}\text{,}
\end{equation*}%
we obtain 
\begin{equation}
\Vert F-P_{N}\Vert _{\mathcal{F}_{1}(\mathbb{C}/\mathbb{Z})}\leq \Vert
F\Vert _{\mathcal{F}_{1}(\mathbb{C}/\mathbb{Z})}\sum_{|n|>N}e^{-\pi n^{2}}%
\text{,}
\end{equation}%
and the term in right hand side of the above inequality go to zero when $%
N\rightarrow +\infty $. Thus, $\mathcal{P}_{\Gamma }:\mathcal{F}_{1}(\mathbb{%
C})\rightarrow \mathcal{F}_{1}(\mathbb{C}/\Gamma )$ has dense range. Now,
given $F\in \mathcal{F}_{1}(\mathbb{C}/\mathbb{Z})$, then $e^{-\frac{\pi }{2}%
z^{2}}F(z)$\ is periodic and analytic and therefore can be uniformly
approximated by its partial Fourier series $P_{N}(z)$. We conclude that $%
\mathcal{P}_{\Gamma }:\mathcal{F}_{1}(\mathbb{C})\rightarrow \mathcal{F}_{1}(%
\mathbb{C}/\Gamma )$ is onto.
\end{proof}

\section{The structure of Gabor frames on ${L}_{\protect\nu }^{2}(0,1)$}

\textbf{Structure of Gabor frames in}\textsl{\ }$L^{2}\left( \mathbb{R}%
\right) $\textbf{. }Given a regular lattice $\Lambda _{\alpha ,\beta
}=\alpha \mathbb{Z}\times \beta \mathbb{Z}$, the dual lattice is $\Lambda
_{\alpha ,\beta }^{0}=\frac{1}{\alpha }\mathbb{Z}\times \frac{1}{\beta }%
\mathbb{Z}$. Gabor frames indexed by lattices have a beautiful structure,
leading to useful properties that lie at the heart of the rich mathematical
theory of time-frequency analysis. A chief example is the \emph{Ron-Shen
duality principle} \cite[Theorem 7.4.3]{Charly}, stating that if $\mathcal{G}%
\left( g,\Lambda \right) $ is a Gabor frame for $L^{2}\left( \mathbb{R}%
\right) $ if and only if $\mathcal{G}\left( g,\Lambda ^{0}\right) $ is a
Gabor Riesz sequence for $L^{2}\left( \mathbb{R}\right) $, where $\Lambda
^{0}$ is the dual lattice. This is a consequence of the \emph{Wexler-Raz
biorthogonality relations}, which state that $\mathcal{G}\left( g,\Lambda
_{\alpha ,\beta }\right) $ is a Gabor frame for $L^{2}\left( \mathbb{R}%
\right) $ \emph{if and only if there exists }$\gamma \in L^{2}\left( \mathbb{%
R}\right) $ such that\emph{\ }%
\begin{equation}
\beta ^{-1}\left\langle \gamma ,M_{\frac{n}{\alpha }}T_{\frac{l}{\beta }%
}g\right\rangle =\delta _{n,0}\delta _{l,0}\text{ \ for all }n,l\in \mathbb{Z%
}\text{.}  \label{WR}
\end{equation}

\textbf{Structure of Gabor frames in}\textsl{\ }$L_{\nu }^{2}(0,1)$\textbf{.}
In this section we will define Gabor frames for ${L}_{\nu }^{2}(0,1)$ and
prove analogues of some of the structural properties of Gabor frames for $%
L^{2}\left( \mathbb{R}\right) $, most notably of \emph{the sufficient part
of the Wexler-Raz biorthogonality relations} (Corollary \ref{WexlerRaz}
below). This will be used to provide, for Hermite and totally positive
functions, sufficient conditions on $\beta $, assuring that $\mathcal{G}%
_{\nu }\left( g,Z_{\beta }\right) $ is a Gabor frame for ${L}_{\nu
}^{2}(0,1) $.

\subsection{Definition}

We will investigate Gabor systems in ${L}_{\nu }^{2}(0,1)$ of the form 
\begin{equation}
\mathcal{G}_{\nu }\left( g,Z\right) :=\{\Sigma _{\nu }\left( \pi (z)g\right)
,\;z\in Z\}\text{,}  \label{Frame_v}
\end{equation}%
for a general $g\in \mathcal{S}_{0}(\mathbb{R})$, with $\left\Vert
g\right\Vert _{L^{2}(\mathbb{R})}=1$, where $Z$ is the regular lattice $%
Z_{\beta }:=\beta \mathbb{Z\subset }\Lambda (\mathbb{Z}):=[0,1)\times 
\mathbb{R}$, such that\ $\pi (\beta n)g=M_{\beta n}g$, to establish Gabor
frame properties in ${L}_{\nu }^{2}(0,1)$.

In this subsection we will justify that (\ref{Frame_v}) is the right
definition for a Gabor system in ${L}_{\nu }^{2}(0,1)$. For a given sequence 
$Z$ of distinct numbers in the fundamental domain representing $\mathbb{C}/%
\mathbb{Z}$, we start by defining a sampling sequence for the space $%
\mathcal{H}_{g}^{\nu }\left( \mathbb{C}/\mathbb{Z}\right) $, whenever there
exist constants $A,B>0$ such that, for all $f\in {L}_{\nu }^{2}(0,1)$\ the
sampling inequality holds:%
\begin{equation}
A\left\Vert V_{g}f\right\Vert _{\mathcal{H}_{g}^{\nu }\left( \mathbb{C}/%
\mathbb{Z}\right) }^{2}\leq \sum_{z\in Z}\left\vert V_{g}f(z)\right\vert
^{2}\leq B\left\Vert V_{g}f\right\Vert _{\mathcal{H}_{g}^{\nu }\left( 
\mathbb{C}/\mathbb{Z}\right) }^{2}\text{.}  \label{samp}
\end{equation}%
We will show that this sampling inequality if equivalent to the Gabor system
in ${L}_{\nu }^{2}(0,1)$ of the form (\ref{Frame_v}). Using the Moyal
formulas (\ref{Moyal}), since $\left\Vert g\right\Vert _{L^{2}(\mathbb{R}%
)}=1 $,\ this is equivalent to saying that\ there exist constants $A,B>0$
such that, for all $f\in {L}_{\nu }^{2}(0,1)$,%
\begin{equation}
A\left\Vert f\right\Vert _{{L}_{\nu }^{2}(0,1)}^{2}\leq \sum_{z\in
Z}\left\vert \left\langle f,\pi (z)g\right\rangle _{L^{2}\left( \mathbb{R}%
\right) }\right\vert ^{2}\leq B\left\Vert f\right\Vert _{{L}_{\nu
}^{2}(0,1)}^{2}\text{.}  \label{frame}
\end{equation}

Now, for all $f\in {L}_{\nu }^{2}(0,1)$, the quasi-periodicity condition (%
\ref{periodic}) can be written as $e^{-2\pi ik\nu }f=T_{k}f$. The following
auxiliary result will be used often in this section.

\begin{lemma}
If $g\in \mathcal{S}_{0}(\mathbb{R})$ and $f\in {L}_{\nu }^{2}(0,1)$, then 
\begin{equation}
\left\langle f,\Sigma _{\nu }\left( \pi (z)g\right) \right\rangle _{{L}_{\nu
}^{2}(0,1)}=\left\langle f,\pi (z)g\right\rangle _{L^{2}\left( \mathbb{R}%
\right) }\text{.}  \label{ident}
\end{equation}
\end{lemma}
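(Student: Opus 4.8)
The plan is to unwind both sides of \eqref{ident} using the definitions of $\Sigma_\nu$ and the inner product on ${L}_\nu^2(0,1)$, and reduce the claim to the observation that integrating a $\mathbb{Z}$-periodic (up to a unimodular phase) integrand over $(0,1)$ and summing the shifts is the same as integrating over $\mathbb{R}$. Concretely, write $z=(u,\eta)$, so that $\pi(z)g(t)=e^{2\pi i\eta t}g(t-u)$, and recall that $\Sigma_\nu\big(\pi(z)g\big)(t)=\sum_{k\in\mathbb{Z}}e^{2\pi ik\nu}\big(\pi(z)g\big)(t-k)$. Since $g\in\mathcal{S}_0(\mathbb{R})$ and time-frequency shifts map $\mathcal{S}_0(\mathbb{R})$ into itself, Proposition \ref{Proposition0} guarantees that $\Sigma_\nu\big(\pi(z)g\big)\in{L}_\nu^2(0,1)$, so the left-hand inner product makes sense.

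First I would expand the left-hand side:
\begin{equation*}
\left\langle f,\Sigma_\nu\left(\pi(z)g\right)\right\rangle_{{L}_\nu^2(0,1)}=\int_0^1 f(t)\,\overline{\sum_{k\in\mathbb{Z}}e^{2\pi ik\nu}\big(\pi(z)g\big)(t-k)}\,dt=\sum_{k\in\mathbb{Z}}e^{-2\pi ik\nu}\int_0^1 f(t)\,\overline{\big(\pi(z)g\big)(t-k)}\,dt\text{,}
\end{equation*}
where the interchange of sum and integral is justified by the Wiener-space bound used in the proof of Proposition \ref{Proposition0} (the tail $\sum_k\operatorname{ess\,sup}_{[0,1]}|(\pi(z)g)(t-k)|<\infty$ together with $f\in L^2(0,1)$). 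Next I would use the quasi-periodicity of $f$ in the form $e^{-2\pi ik\nu}f(t)=f(t+k)$ — equivalently $e^{-2\pi ik\nu}\overline{(\pi(z)g)(t-k)}\,f(t)=f(t+k)\overline{(\pi(z)g)(t-k)}$ — and substitute $s=t-k$ in the $k$-th integral, so that $\int_0^1 f(t+k)\overline{(\pi(z)g)(t-k)}\,dt=\int_{-k}^{1-k}f(s+2k)\,\overline{(\pi(z)g)(s)}\,ds$. Here I must be careful: the cleaner route is to instead write $e^{-2\pi ik\nu}f(t)=T_kf(t)=f(t-k)$ (as the excerpt notes, $e^{-2\pi ik\nu}f=T_kf$), giving $\sum_k\int_0^1 f(t-k)\overline{(\pi(z)g)(t-k)}\,dt$, and then substitute $s=t-k$ to get $\sum_k\int_{-k}^{1-k}f(s)\overline{(\pi(z)g)(s)}\,ds=\int_{\mathbb{R}}f(s)\,\overline{(\pi(z)g)(s)}\,ds=\left\langle f,\pi(z)g\right\rangle_{L^2(\mathbb{R})}$, since the intervals $[-k,1-k)$ tile $\mathbb{R}$. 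This is exactly the right-hand side.

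The main obstacle is purely a matter of rigor rather than ideas: one must verify that $f\cdot\overline{\pi(z)g}\in L^1(\mathbb{R})$ so that the tiling/telescoping of the integral is legitimate and the right-hand bracket is well defined, and that the termwise manipulation of the series is justified. Both follow from $\pi(z)g\in\mathcal{S}_0(\mathbb{R})\subset W(\mathbb{R})$ (so $\pi(z)g\in L^2(\mathbb{R})$ with controlled $\ell^1$ decay of its $[0,1]$-sup norms) combined with $f\in L^2_{\mathrm{loc}}$ and $|f|$ being bounded on $(0,1)$ in the $L^2$ sense: Cauchy–Schwarz on each translate $[-k,1-k)$ against the Wiener-norm weights of $\pi(z)g$ gives an absolutely convergent series, which both justifies Fubini and shows $\langle f,\pi(z)g\rangle_{L^2(\mathbb{R})}$ converges absolutely. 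With these integrability points dispatched, the identity \eqref{ident} is immediate from the change of variables and the quasi-periodicity relation $e^{-2\pi ik\nu}f=T_kf$.
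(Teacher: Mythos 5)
Your proof is correct and follows essentially the same route as the paper: expand $\Sigma_\nu(\pi(z)g)$, move the phase $e^{-2\pi ik\nu}$ onto $f$ via the quasi-periodicity $e^{-2\pi ik\nu}f=T_kf$, and let the translated unit intervals tile $\mathbb{R}$; your added integrability justifications via the Wiener-amalgam norm of $\pi(z)g\in\mathcal{S}_0(\mathbb{R})$ are a welcome bit of extra rigor that the paper omits. Note only that your first attempted substitution (reading the quasi-periodicity as $e^{-2\pi ik\nu}f(t)=f(t+k)$) is indeed wrong, as you noticed — the correct form $e^{-2\pi ik\nu}f(t)=f(t-k)$, which you switch to, is what makes the argument close.
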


\begin{proof}
Thus, if $g\in \mathcal{S}_{0}(\mathbb{R})$, by Proposition \ref%
{Proposition0}, $\Sigma _{\nu }\left( \pi (z)g\right) \in {L}_{\nu
}^{2}(0,1) $ and we have 
\begin{eqnarray*}
\left\langle f,\Sigma _{\nu }\left( \pi (z)g\right) \right\rangle _{{L}_{\nu
}^{2}(0,1)} &=&\sum_{k\in \mathbb{Z}}\left\langle f,e^{2\pi ik\nu }T_{k}\pi
(z)g\right\rangle _{{L}_{\nu }^{2}(0,1)} \\
&=&\sum_{k\in \mathbb{Z}}\left\langle e^{-2\pi ik\nu }f,T_{k}\pi
(z)g\right\rangle _{{L}_{\nu }^{2}(0,1)} \\
&=&\sum_{k\in \mathbb{Z}}\left\langle T_{k}f,T_{k}\pi (z)g\right\rangle _{{L}%
_{\nu }^{2}(0,1)} \\
&=&\sum_{k\in \mathbb{Z}}\left\langle f,\left( \pi (z)g\right) \right\rangle
_{{L}^{2}(k,k+1)} \\
&=&\left\langle f,\left( \pi (z)g\right) \right\rangle _{L^{2}\left( \mathbb{%
R}\right) }\text{.}
\end{eqnarray*}
\end{proof}

Using (\ref{ident}), one concludes that the inequalities (\ref{frame}) can
be written as 
\begin{equation}
A\left\Vert f\right\Vert _{{L}_{\nu }^{2}(0,1)}^{2}\leq \sum_{z\in
Z}\left\vert \left\langle f,\Sigma _{\nu }\left( \pi (z)g\right)
\right\rangle _{{L}_{\nu }^{2}(0,1)}\right\vert ^{2}\leq B\left\Vert
f\right\Vert _{{L}_{\nu }^{2}(0,1)}^{2}\text{.}  \label{altFrame}
\end{equation}%
This justifies the following.

\begin{definition}
The Gabor system $\mathcal{G}_{\nu }\left( g,Z\right) :=\{\Sigma _{\nu
}\left( \pi (z)g\right) ,\;z\in Z\}$ is a \emph{Gabor frame} or \emph{%
Weyl-Heisenberg frame }in ${L}_{\nu }^{2}(0,1)$,\emph{\ }whenever there
exist constants $A,B>0$ such that, for all $f\in {L}_{\nu }^{2}(0,1)$, (\ref%
{altFrame}) holds.
\end{definition}

\subsection{The frame operator on ${L}_{\protect\nu }^{2}(0,1)$}

Let us consider $Z$ as the regular lattice $Z_{\beta }:=i\beta \mathbb{Z}%
\subset \lbrack 0,1)\times \mathbb{R}$. Equivalently, $Z_{\beta }$ consists
of the pairs $(0,\beta n)\subset \lbrack 0,1)\times \mathbb{R}$, $n\in 
\mathbb{Z}$. Then $\pi (z)g=M_{\beta n}g$ and%
\begin{equation*}
\mathcal{G}_{\nu }\left( g,Z\right) :=\{\Sigma _{\nu }\left( M_{\beta
n}g\right) ,\;n\in \mathbb{Z}\}\text{.}
\end{equation*}%
Let $g\in \mathcal{S}_{0}(\mathbb{R})$ and $f\in {L}_{\nu }^{2}(0,1)$. As in
the classical case, we consider the analysis operator 
\begin{equation}
C_{g}^{\nu }(f)=\left( \left\langle f,M_{\beta n}g\right\rangle _{L^{2}(%
\mathbb{R})}\right) _{n}=\left( \left\langle f,\Sigma _{\nu }\left( M_{\beta
n}g\right) \right\rangle _{{L}_{\nu }^{2}(0,1)}\right) _{n}\text{.}
\end{equation}%
The formal adjoint operator of $C_{g}^{\nu }$ is the synthesis operator $%
D_{g}^{\nu }$, which is given by 
\begin{equation}
D_{g}^{\nu }((c)_{n})=\sum_{n\in \mathbb{Z}}c_{n}\Sigma _{\nu }M_{\beta n}g%
\text{.}  \label{Dg}
\end{equation}%
If both operators converge in ${L}_{\nu }^{2}(0,1)$, then the Gabor frame
operator in ${L}_{\nu }^{2}(0,1)$ is%
\begin{equation}
S_{g,\gamma }^{\nu }f=D_{\gamma }^{\nu }C_{g}^{\nu }(f)=\sum_{z\in
Z}\left\langle f,\Sigma _{\nu }M_{\beta n}g\right\rangle _{{L}_{\nu
}^{2}(0,1)}\Sigma _{\nu }(M_{\beta n}\gamma )\text{,}  \label{FrameOp1}
\end{equation}%
\ \ where the dual window $\gamma $ is defined as%
\begin{equation*}
\gamma :=S^{-1}g
\end{equation*}%
and the reconstruction formula $f=S_{g,\gamma }^{\nu }f$ becomes: 
\begin{equation}
f=\sum_{n\in \mathbb{Z}}\left\langle f,\Sigma _{\nu }\left( M_{\beta
n}g\right) \right\rangle _{{L}_{\nu }^{2}(0,1)}\Sigma _{\nu }\left( M_{\beta
n}\gamma \right) \text{.}  \label{reconstruction}
\end{equation}

We will see, in Proposition \ref{prop-conv} below, that $g,\gamma \in 
\mathcal{S}_{0}$ assures that these operators are well defined, in
particular they converge in $L^{2}$ sense. For instance, we recall that the
Feichtinger algebra \cite{Fei} can be defined as follows 
\begin{equation*}
\mathcal{S}_{0}:=\left\{ g\in {L}^{2}(\mathbb{R}):\mathcal{B}g\in \mathcal{F}%
^{1}(\mathbb{C})\right\} \text{.}
\end{equation*}%
The following proposition assures the existence of $D_{g}^{\nu }$ and $%
C_{g}^{\nu }$, whenever $g\in \mathcal{S}_{0}$.

\begin{proposition}
\label{prop-conv} For $g\in \mathcal{S}_{0}$ we have, for every $f\in {L}%
_{\nu }^{2}(0,1)$, 
\begin{equation}
\sum_{n\in \mathbb{Z}}\left\vert \left\langle f,\Sigma _{\nu }\left(
M_{\beta n}g\right) \right\rangle _{{L}_{\nu }^{2}(0,1)}\right\vert
^{2}=\sum_{n\in \mathbb{Z}}\left\vert \left\langle f,M_{\beta
n}g\right\rangle _{L^{2}(\mathbb{R})}\right\vert ^{2}\leq C\Vert g\Vert _{%
\mathcal{S}_{0}}^{2}\Vert f\Vert _{{L}_{\nu }^{2}(0,1)}^{2}
\end{equation}%
and 
\begin{equation}
\left\Vert \sum_{n\in \mathbb{Z}}c_{n}\Sigma _{\nu }M_{\beta n}g\right\Vert
_{{L}_{\nu }^{2}(0,1)}\leq C^{\prime }\Vert g\Vert _{\mathcal{S}_{0}}\Vert
(c_{n})_{n}\Vert _{\ell ^{2}(\mathbb{Z})}.
\end{equation}
\end{proposition}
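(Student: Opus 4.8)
The plan is to reduce both inequalities to a single Bessel-type estimate for the analysis operator $C_g^\nu$, and to prove that estimate by expanding $f$ in the orthonormal basis $\{e_{k,\nu}\}$ of $L^2_\nu(0,1)$ and running a Schur-type argument powered by the embedding $\mathcal{S}_0(\mathbb{R})\hookrightarrow W(\mathbb{R})$.

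First I would record that $D_g^\nu=(C_g^\nu)^\ast$: since $\mathcal{S}_0(\mathbb{R})$ is invariant under modulation, $\Sigma_\nu(M_{\beta n}g)\in L^2_\nu(0,1)$ by Proposition \ref{Proposition0}, and then for finitely supported $(c_n)$ and $f\in L^2_\nu(0,1)$ one checks directly from the definitions (\ref{Dg}), (\ref{FrameOp1}) that $\langle D_g^\nu((c_n)),f\rangle_{L^2_\nu(0,1)}=\langle (c_n),C_g^\nu(f)\rangle_{\ell^2(\mathbb{Z})}$. Hence the second inequality follows from the first by duality, with $C'=\sqrt C$, and the series defining $D_g^\nu$ then converges unconditionally in $L^2_\nu(0,1)$. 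Moreover the preceding Lemma (equation (\ref{ident})) gives $\langle f,\Sigma_\nu(M_{\beta n}g)\rangle_{L^2_\nu(0,1)}=\langle f,M_{\beta n}g\rangle_{L^2(\mathbb{R})}$, which is the asserted equality, so it remains to prove
\[
\sum_{n\in\mathbb{Z}}\bigl|\langle f,M_{\beta n}g\rangle_{L^2(\mathbb{R})}\bigr|^2\le C\,\|g\|_{\mathcal{S}_0}^2\,\|f\|_{L^2_\nu(0,1)}^2 .
\]

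Next I would write $f=\sum_k a_k e_{k,\nu}$ with $\sum_k|a_k|^2=\|f\|_{L^2_\nu(0,1)}^2$. Because $f\mapsto\langle f,\Sigma_\nu(M_{\beta n}g)\rangle_{L^2_\nu(0,1)}$ is a bounded functional, one may interchange the series with the pairing, and using Proposition \ref{Proposition1}(3) evaluated at $(x,\xi)=(0,\beta n)$ one gets
\[
\langle f,M_{\beta n}g\rangle_{L^2(\mathbb{R})}=\sum_{k\in\mathbb{Z}}a_k\,G(\beta n-\nu-k),\qquad G:=\mathcal{F}(\bar g).
\]
Applying Cauchy--Schwarz to the inner sum,
\[
\Bigl|\sum_k a_k\,G(\beta n-\nu-k)\Bigr|^2\le\Bigl(\sum_k|G(\beta n-\nu-k)|\Bigr)\sum_k|a_k|^2\,|G(\beta n-\nu-k)|,
\]
and summing in $n$ yields $\sum_n|\langle f,M_{\beta n}g\rangle|^2\le R_1R_2\sum_k|a_k|^2$, where $R_1:=\sup_n\sum_k|G(\beta n-\nu-k)|$ and $R_2:=\sup_k\sum_n|G(\beta n-\nu-k)|$.

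Finally I would bound $R_1,R_2\lesssim_\beta\|G\|_{W(\mathbb{R})}$. The key point is that $g\in\mathcal{S}_0(\mathbb{R})$ forces $G=\mathcal{F}(\bar g)\in\mathcal{S}_0(\mathbb{R})$, since $\mathcal{S}_0$ is invariant under the Fourier transform and under complex conjugation, isometrically for the $V_{h_0}$-norm, so $\|G\|_{\mathcal{S}_0}=\|g\|_{\mathcal{S}_0}$; consequently $G$ is continuous and $G\in W(\mathbb{R})$ with $\|G\|_{W(\mathbb{R})}\lesssim\|G\|_{\mathcal{S}_0}=\|g\|_{\mathcal{S}_0}$ by \cite[Proposition 12.1.4]{Charly}. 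For $R_1$, summing $|G|$ over an integer translate of $\mathbb{Z}$ gives $\sum_k|G(c-k)|\le\|G\|_{W(\mathbb{R})}$ uniformly in $c$; for $R_2$, summing $|G|$ over a shifted lattice $\beta\mathbb{Z}+c$ and counting at most $1+\beta^{-1}$ lattice points per unit interval gives $\sum_n|G(\beta n+c)|\le(1+\beta^{-1})\|G\|_{W(\mathbb{R})}$ uniformly in $c$. Combining, the displayed estimate holds with $C=C(\beta)=(1+\beta^{-1})$ times the square of the $\mathcal{S}_0\hookrightarrow W$ embedding constant. The only mildly delicate points are the bookkeeping for the $\beta$-dependent constant in $R_2$ and the justification of interchanging the series with the pairing; both are routine once the Lemma and the $\mathcal{S}_0\hookrightarrow W(\mathbb{R})$ embedding are in hand, and the remainder is a standard Schur-type argument.
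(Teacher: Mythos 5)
Your proof is correct, but it follows a genuinely different route from the paper's. You diagonalize the problem over the orthonormal basis $\{e_{k,\nu}\}$ of ${L}_{\nu }^{2}(0,1)$, write $\left\langle f,M_{\beta n}g\right\rangle _{L^{2}(\mathbb{R})}=\sum_{k}a_{k}\,\mathcal{F}(\bar{g})(\beta n-\nu -k)$ via Proposition \ref{Proposition1}(3), and run a Schur test whose row and column sums are controlled by the $W(\mathbb{R})$-norm of $\mathcal{F}(\bar{g})$, using the Fourier invariance of $\mathcal{S}_{0}$ and the embedding $\mathcal{S}_{0}\hookrightarrow W(\mathbb{R})$; the synthesis bound then comes for free from the identity $D_{g}^{\nu }=(C_{g}^{\nu })^{\ast }$. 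The paper instead transfers everything to the Fock side via the Bargmann transform and the intertwining relation (\ref{barg-identity}), proves endpoint estimates for the coefficient map on $\mathcal{F}_{1,\nu }(\mathbb{C}/\mathbb{Z})$ and $\mathcal{F}_{\infty ,\nu }(\mathbb{C}/\mathbb{Z})$ using the sub-mean-value inequality for entire functions, and obtains the $L^{2}$ case by complex interpolation, repeating a parallel endpoint-plus-interpolation argument for the synthesis operator. Your argument is more elementary (no interpolation, no Fock-space machinery), yields an explicit constant $C=C(\beta )$ reflecting the density of the lattice $\beta \mathbb{Z}$, and halves the work via the adjoint observation; the paper's argument is less tied to the specific lattice structure and sets up the scale of spaces $\mathcal{F}_{p,\nu }(\mathbb{C}/\mathbb{Z})$ that the authors reuse elsewhere and plan to exploit for the $L^{p}$ theory. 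The only points needing care in your write-up -- the interchange of the basis expansion with the pairing, and the counting of points of $\beta \mathbb{Z}+c$ per unit interval in the bound for $R_{2}$ -- are handled exactly as you indicate, so there is no gap.
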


\begin{proof}
Firstly, by definition, $g\in \mathcal{S}_{0}$ is equivalent to saying that
the function $G=\mathcal{B}g\in \mathcal{F}^{1}(\mathbb{C})$. Secondly, if $%
F $ is an entire function, then the estimate 
\begin{equation*}
\left\vert F(w)e^{-\frac{\pi }{2}\mid w\mid ^{2}}\right\vert \leqslant \frac{%
C}{r^{2}}\int_{D(w,r)}\left\vert F(z)e^{-\frac{\pi }{2}|z|^{2}}\right\vert dz
\end{equation*}%
holds. Thus, 
\begin{equation}
\left\vert \tau _{i\beta n}F(z)\right\vert e^{-\frac{\pi }{2}%
|z|^{2}}=\left\vert F(z-i\beta n)\right\vert e^{-\frac{\pi }{2}|z-i\beta
n|^{2}}\leq C\int_{D(z-i\beta n,r)}\left\vert F(z)e^{-\frac{\pi }{2}%
|z|^{2}}\right\vert dz\text{,}
\end{equation}%
where $\tau _{a}f(z)=\tau (a)f(z)=e^{\pi z\overline{a}-\frac{\pi }{2}%
|a|^{2}}f(z-a)$ is the Weyl operator. Therefore, for $F\in \mathcal{F}%
_{1,\nu }(\mathbb{C}/\mathbb{Z})$, we have%
\begin{equation*}
\sum_{n\in \mathbb{Z}}\left\vert \left\langle f,\Sigma _{\nu }\left(
M_{\beta n}g\right) \right\rangle _{{L}_{\nu }^{2}(0,1)}\right\vert
^{2}=\sum_{n\in \mathbb{Z}}\left\vert \left\langle f,M_{\beta
n}g\right\rangle _{L^{2}(\mathbb{R})}\right\vert ^{2}
\end{equation*}%
and, applying the Bargmann transform as an unitary operator $\mathcal{B}%
:L^{1}(\mathbb{R})\rightarrow \mathcal{F}^{1}(\mathbb{C})$ and using the
intertwining property 
\begin{equation}
\mathcal{B}(\pi (z)g)=e^{i\pi \xi \eta }\tau _{\overline{z}}\mathcal{B}%
g;\quad z\in \mathbb{C}\text{,}  \label{barg-identity}
\end{equation}%
gives%
\begin{equation*}
\sum_{n\in \mathbb{Z}}\left\vert \left\langle f,M_{\beta n}g\right\rangle
_{L^{2}(\mathbb{R})}\right\vert ^{2}=\sum_{n\in \mathbb{Z}}\left\vert
\left\langle F,\tau _{-i\beta n}G\right\rangle \right\vert \text{,}
\end{equation*}%
where $G=\mathcal{B}g,F=\mathcal{B}f\in \mathcal{F}^{1}(\mathbb{C})$. Now, 
\begin{align*}
\sum_{n\in \mathbb{Z}}\left\vert \left\langle F,\tau _{-i\beta
n}G\right\rangle \right\vert & =\sum_{n\in \mathbb{Z}}\left\vert
\left\langle \tau _{i\beta n}F,G\right\rangle \right\vert \\
& \leq C\sum_{n\in \mathbb{Z}}\int_{\mathbb{C}}\int_{\mathbb{C}/\mathbb{Z}%
}\left\vert F(w)\right\vert \chi _{D(z-i\beta n,r)}(w)e^{-\frac{\pi }{2}%
|w|^{2}}dA(w)|G(z)|e^{-\frac{\pi }{2}|z|^{2}}dz \\
& \leq C\sum_{n\in \mathbb{Z}}\int_{\mathbb{C}/\mathbb{Z}}\left\vert
F(w)\right\vert e^{-\frac{\pi }{2}|w|^{2}}\int_{\mathbb{C}}\chi _{D(w+i\beta
n,r)}(z)|G(z)|e^{-\frac{\pi }{2}|z|^{2}}dzdw \\
& \leq C\Vert F\Vert _{\mathcal{F}_{1,\nu }(\mathbb{C}/\mathbb{Z})}\Vert
G\Vert _{1}\text{.}
\end{align*}

If $F\in \mathcal{F}_{\infty ,\nu }(\mathbb{C}/\mathbb{Z})$, then 
\begin{equation}
\left\vert \left\langle \tau _{i\beta n}F,G\right\rangle \right\vert \leq
\Vert F\Vert _{\mathcal{F}_{\infty ,\nu }(\mathbb{C}/\mathbb{Z})}\Vert
G\Vert _{1}
\end{equation}%
and this implies%
\begin{equation*}
\Vert \left\langle F,\tau _{-i\beta n}G\right\rangle \Vert _{\ell ^{\infty }(%
\mathbb{Z})}\leq \Vert F\Vert _{\mathcal{F}_{\infty ,\nu }(\mathbb{C}/%
\mathbb{Z})}\Vert G\Vert _{1}\text{.}
\end{equation*}%
By complex interpolation between $p=1$ and $p=\infty $, we obtain for $F\in 
\mathcal{F}_{2,\nu }(\mathbb{C}/\mathbb{Z})$, 
\begin{equation}
\Vert \left\langle F,\tau _{-i\beta n}G\right\rangle \Vert _{2}\leq C\Vert
F\Vert _{\mathcal{F}_{2,\nu }(\mathbb{C}/\mathbb{Z})}\Vert G\Vert _{1}\text{.%
}
\end{equation}%
Now, since the Bargmann transform $\mathcal{B}$ is an unitary isomorphism
from ${L}_{\nu }^{2}(0,1)$ onto $\mathcal{F}_{2,\nu }(\mathbb{C}/\mathbb{Z})$
and using the intertwining property (\ref{barg-identity}),\ we obtain the
first result. The second result is obtained in the same way. In fact for $%
c=(c_{n})\in \ell ^{1}(\mathbb{Z})$, we have 
\begin{align*}
\left\Vert \sum_{k,n\in \mathbb{Z}}c_{n}e^{2i\pi \nu k-i\pi \beta nk}\tau
_{k-i\beta n}G\right\Vert _{\mathcal{F}_{1,\nu }(\mathbb{C}/\mathbb{Z})}&
\leq \sum_{n\in \mathbb{Z}}|c_{n}|\sum_{k\in \mathbb{Z}}\int_{\mathbb{C}%
/\Gamma }|G(z-k+i\beta n)|e^{-\frac{\pi }{2}|z-k+i\beta n|^{2}}dz \\
& \leq \Vert c\Vert _{\ell ^{1}(\mathbb{Z})}\Vert G\Vert _{\mathcal{F}_{1}(%
\mathbb{C})}\text{.}
\end{align*}%
If $c=(c_{n})\in \ell ^{\infty }(\mathbb{Z})$, 
\begin{align*}
\left\vert \sum_{k,n\in \mathbb{Z}}c_{n}e^{2i\pi \nu k-i\pi \beta nk}\tau
_{k-i\beta n}G(z)\right\vert & \leq \sum_{n,k\in \mathbb{Z}%
}|c_{n}||G(z-k+i\beta n)|e^{-\frac{\pi }{2}|z-k+i\beta n|^{2}} \\
& \leq C\sum_{n,k\in \mathbb{Z}}|c_{n}|\int_{D(z-k+i\beta n,r)}|G(w)|e^{-%
\frac{\pi }{2}|w|^{2}}dw \\
& \leq C\Vert c\Vert _{\ell ^{\infty }(\mathbb{Z})}\Vert G\Vert _{\mathcal{F}%
_{1}(\mathbb{C})}\text{.}
\end{align*}%
Thus, 
\begin{equation*}
\left\Vert \sum_{k,n\in \mathbb{Z}}c_{n}e^{2i\pi \nu k-i\pi \beta nk}\tau
_{k-i\beta n}G\right\Vert _{\mathcal{F}_{\infty ,\nu }(\mathbb{C}/\mathbb{Z}%
)}\leq C\Vert c\Vert _{\ell ^{\infty }(\mathbb{Z})}\Vert G\Vert _{\mathcal{F}%
^{1}(\mathbb{C})}\text{.}
\end{equation*}%
By complex interpolation between $p=1$ and $p=\infty $, we obtain, for $%
c=(c_{n})\in \ell ^{2}(\mathbb{Z})$, 
\begin{equation*}
\left\Vert \sum_{k,n\in \mathbb{Z}}c_{n}e^{2i\pi \nu k-i\pi \beta nk}\tau
_{k-i\beta n}G\right\Vert _{\mathcal{F}_{2,\nu }(\mathbb{C}/\mathbb{Z})}\leq
C\Vert c\Vert _{\ell ^{2}(\mathbb{Z})}\Vert G\Vert _{\mathcal{F}^{1}(\mathbb{%
C})}\text{.}
\end{equation*}%
Now using the identity \eqref{barg-identity} and the unitary isomorphism
property of the Bargmann transform from ${L}_{\nu }^{2}(0,1)$ onto $\mathcal{%
F}_{2,\nu }(\mathbb{C}/\mathbb{Z})$, we obtain the second result.
\end{proof}

\subsection{Janssen's representation and sufficient conditions for Gabor
frames in ${L}_{\protect\nu }^{2}(0,1)$}

By Proposition \ref{prop-conv} the series in \eqref{FrameOp1} defining the
operator $S_{g,\gamma }^{\nu }$ converges in $L^{2}$ sense. Proposition \ref%
{FrameOperator} tells that the frame operator (\ref{FrameOp1}) coincides
with the restriction to ${L}_{\nu }^{2}(0,1)$ of the frame operator
associated with the classical Gabor system $\mathcal{G}\left( g,\Lambda
_{1,\beta }\right) $, where $\Lambda _{1,\beta }=\mathbb{Z}\times \beta 
\mathbb{Z}$. The proof is a simple application of identity (\ref{ident}),
which assumes the following form for $\mathcal{G}_{\nu }\left( g,Z_{\beta
}\right) $:%
\begin{equation}
\left\langle f,\Sigma _{\nu }M_{\beta n}g\right\rangle _{{L}_{\nu
}^{2}(0,1)}=\left\langle f,M_{\beta n}g\right\rangle _{L^{2}\left( \mathbb{R}%
\right) }\text{.}  \label{id}
\end{equation}

\begin{proposition}
\label{FrameOperator}Let $g,\gamma \in \mathcal{S}_{0}$. For $f\in {L}_{\nu
}^{2}(0,1)$, the frame operator $Sf$ defined by (\ref{FrameOp1}), can be
written as 
\begin{equation*}
S_{g,\gamma }^{\nu }f=\sum_{k\in \mathbb{Z}}\sum_{n\in \mathbb{Z}%
}\left\langle f,M_{\beta n}T_{k}g\right\rangle _{L^{2}\left( \mathbb{R}%
\right) }M_{\beta n}T_{k}\gamma \text{.}
\end{equation*}
\end{proposition}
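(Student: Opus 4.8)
The plan is to unwind the definition \eqref{FrameOp1} of $S_{g,\gamma}^{\nu}$ and transfer the whole computation to $L^{2}(\mathbb{R})$ by means of the identity \eqref{id} and the definition \eqref{Period} of $\Sigma_{\nu}$. First I would apply \eqref{id} to the frame coefficients, so that $S_{g,\gamma}^{\nu}f=\sum_{n\in\mathbb{Z}}\langle f,M_{\beta n}g\rangle_{L^{2}(\mathbb{R})}\,\Sigma_{\nu}(M_{\beta n}\gamma)$. Then I would expand the synthesis term via \eqref{Period} together with the commutation relation $T_{k}M_{\beta n}=e^{-2\pi i\beta nk}M_{\beta n}T_{k}$, which gives $\Sigma_{\nu}(M_{\beta n}\gamma)=\sum_{k\in\mathbb{Z}}e^{2\pi ik(\nu-\beta n)}M_{\beta n}T_{k}\gamma$.

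The second ingredient is the evaluation of the coefficient. Performing the change of variable $t\mapsto t+k$ in $\langle f,M_{\beta n}T_{k}g\rangle_{L^{2}(\mathbb{R})}$ and invoking the quasi-periodicity \eqref{functional} of $f$ — the same manipulation underlying the auxiliary lemma that leads to \eqref{ident} — yields $\langle f,M_{\beta n}T_{k}g\rangle_{L^{2}(\mathbb{R})}=e^{2\pi ik(\nu-\beta n)}\langle f,M_{\beta n}g\rangle_{L^{2}(\mathbb{R})}$. Substituting this back into the previous display shows that the phase factors $e^{2\pi ik(\nu-\beta n)}$ produced by the expansion of $\Sigma_{\nu}$ are exactly absorbed into the coefficients, leaving one, at least formally, with $S_{g,\gamma}^{\nu}f=\sum_{n\in\mathbb{Z}}\sum_{k\in\mathbb{Z}}\langle f,M_{\beta n}T_{k}g\rangle_{L^{2}(\mathbb{R})}M_{\beta n}T_{k}\gamma$, which is the asserted identity after renaming the order of summation. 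This also makes transparent the interpretation announced in the text: $S_{g,\gamma}^{\nu}$ is the restriction to $L_{\nu}^{2}(0,1)$ of the Gabor frame operator of $\mathcal{G}(g,\Lambda_{1,\beta})$ on $L^{2}(\mathbb{R})$, with $\Lambda_{1,\beta}=\mathbb{Z}\times\beta\mathbb{Z}$.

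The main obstacle is making the rearrangement of the double series and the interchange of summation with the inner product rigorous. Here I would argue as follows: since $g\in\mathcal{S}_{0}$, Proposition \ref{prop-conv} guarantees that the outer sum defining $S_{g,\gamma}^{\nu}f$ converges in $L_{\nu}^{2}(0,1)$; on the other hand, for $g,\gamma\in\mathcal{S}_{0}(\mathbb{R})$ the double sum $\sum_{k,n}\langle h,M_{\beta n}T_{k}g\rangle_{L^{2}(\mathbb{R})}M_{\beta n}T_{k}\gamma$ is the frame operator of the classical Gabor system $\mathcal{G}(g,\Lambda_{1,\beta})$, which is bounded on $L^{2}(\mathbb{R})$ with an absolutely convergent expansion via the standard Walnut--Janssen theory. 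Applying this with $h=f$, the two convergence statements together allow one to pass from the iterated sum in the formal computation to the genuine double sum and to exchange it with $\langle f,\cdot\rangle$, completing the identification. Beyond this, the only point requiring care is the bookkeeping of the phase factors in the commutation relations, which is routine.
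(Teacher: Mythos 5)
Your proposal is correct and follows essentially the same route as the paper: apply the identity (\ref{ident}) to move the coefficients to $L^{2}(\mathbb{R})$, expand $\Sigma_{\nu}(M_{\beta n}\gamma)$ by its definition, and use the quasi-periodicity of $f$ to absorb the resulting phases $e^{2\pi ik(\nu-\beta n)}$ into the inner products; the paper performs the identical cancellation by writing $f=e^{-2\pi ik\nu}T_{-k}f$ and shifting the translation onto $g$, which is the same change of variables you carry out. Your added justification of the rearrangement via Proposition \ref{prop-conv} and the boundedness of the classical frame operator for $\mathcal{S}_{0}$ windows is a welcome refinement of a point the paper passes over quickly.
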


\begin{proof}
If $g,\gamma \in \mathcal{S}_{0}$ then, by Proposition\ \ref{Proposition0}, $%
\Sigma _{\nu }g,\Sigma _{\nu }\gamma \in {L}_{\nu }^{2}(0,1)$ and\ by
Proposition\ (\ref{FrameOp1}), $S_{g,\gamma }$ is bounded.\ From (\ref%
{FrameOp1}) and using the quasi-periodicity condition (\ref{periodic}), $%
f=e^{-2\pi ik\nu }T_{-k}f$, (\ref{id}) gives: 
\begin{eqnarray*}
S_{g,\gamma }^{\nu }f &=&\sum_{n\in Z}\left\langle f,\Sigma _{\nu }M_{\beta
n}g\right\rangle _{{L}_{\nu }^{2}(0,1)}\Sigma _{\nu }(M_{\beta n}\gamma ) \\
&=&\sum_{n\in \mathbb{Z}}\left\langle f,M_{\beta n}g\right\rangle
_{L^{2}\left( \mathbb{R}\right) }\Sigma _{\nu }(M_{\beta n}\gamma ) \\
&=&\sum_{k\in \mathbb{Z}}\sum_{n\in \mathbb{Z}}\left\langle e^{-2\pi ik\nu
}T_{-k}f,M_{\beta n}g\right\rangle _{L^{2}\left( \mathbb{R}\right) }e^{2\pi
ik\nu }T_{k}M_{\beta n}\gamma \\
&=&\sum_{k\in \mathbb{Z}}\sum_{n\in \mathbb{Z}}\left\langle f,T_{k}M_{\beta
n}g\right\rangle _{L^{2}\left( \mathbb{R}\right) }T_{k}M_{\beta n}\gamma 
\text{.}
\end{eqnarray*}
\end{proof}

The simple observation of Proposition \ref{FrameOperator} has the remarkable
consequence that Gabor frames $\mathcal{G}_{\nu }\left( g,Z_{\beta }\right) $
in ${L}_{\nu }^{2}(0,1)$\ inherit some of the structure properties of Gabor
frames in $L^{2}\left( \mathbb{R}\right) $. We will now explore some of
them, omitting the corresponding details when the algebraic manipulations
required are the same as those in the $L^{2}\left( \mathbb{R}\right) $ case.

\begin{itemize}
\item Starting from Proposition \ref{FrameOperator}, one can mimic all the
manipulations in \cite[Section 6.3]{Charly} and arrive at \emph{Walnut's
representation} of the Gabor frame operator. Let $g,\gamma \in \mathcal{S}%
_{0}$ and $f\in {L}_{\nu }^{2}(0,1)$. Then 
\begin{equation*}
S_{g,\gamma }^{\nu }f=\beta ^{-1}\sum_{n\in \mathbb{Z}}G_{n}T_{\frac{n}{%
\beta }}f\text{, }
\end{equation*}%
where $G_{n}$ is called the correlation function of the pair $\left(
g,\gamma \right) $, defined as 
\begin{equation*}
G_{n}(x)=\sum_{k\in \mathbb{Z}}\overline{g}\left( x-\frac{n}{\beta }%
-k\right) \gamma \left( x-k\right) \text{.}
\end{equation*}%
The correlation function is invariant under integer translations and,
similarly as in \cite[(7.7)]{Charly}, we can expand it as a Fourier series%
\begin{equation*}
G_{n}(x)=\sum_{k\in \mathbb{Z}}\left\langle \gamma ,M_{k}T_{\frac{n}{\beta }%
}g\right\rangle e^{2\pi ikx}\text{.}
\end{equation*}

\item This leads, as in \cite[Section 7.2]{Charly} to the important \emph{%
Janssen's representation }of the Gabor frame operator. Given $f\in {L}_{\nu
}^{2}(0,1)$, then 
\begin{equation*}
S_{g,\gamma }^{\nu }f=\beta ^{-1}\sum_{k\in \mathbb{Z}}\sum_{n\in \mathbb{Z}%
}\left\langle \gamma ,M_{k}T_{\frac{n}{\beta }}g\right\rangle _{L^{2}\left( 
\mathbb{R}\right) }M_{k}T_{\frac{n}{\beta }}f
\end{equation*}%
Janssen's representation is one of the fundamental results in time-frequency
analysis. It has several applications and notorious ramifications to other
mathematical corners, for instance, it can be interpreted as a quantum theta
function and as a Poisson formula for the sympletic Fourier transform \cite%
{LuefManin}. For our purposes, the most important consequence is the
following version of the Wexler-Raz biorthogonality relations.
\end{itemize}

\begin{corollary}
\label{WexlerRaz} Suppose that $g,\gamma \in \mathcal{S}_{0}$. If $%
S_{g,\gamma }=I$ or, equivalently, if there exists $\gamma \in \mathcal{S}%
_{0}$ such that 
\begin{equation}
\beta ^{-1}\sum_{n\in \mathbb{Z}}\left\langle \gamma ,M_{k}T_{\frac{n}{\beta 
}}g\right\rangle _{L^{2}\left( \mathbb{R}\right) }e^{2i\pi \frac{nl}{\beta }%
}=\delta _{k,0}\text{ \ for all }l\in \mathbb{Z}\text{,}  \label{wrp}
\end{equation}%
then $\mathcal{G}_{\nu }\left( g,Z_{\beta }\right) $ is Gabor frame for ${L}%
_{\nu }^{2}(0,1)$.
\end{corollary}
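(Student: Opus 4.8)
The plan is to prove the two assertions of the corollary in order: first the equivalence of $S_{g,\gamma}^{\nu}=I$ with the biorthogonality relations (\ref{wrp}), and then the implication that $S_{g,\gamma}^{\nu}=I$ forces $\mathcal{G}_{\nu}(g,Z_{\beta})$ to be a frame. For the equivalence I would start from Janssen's representation derived just above: for $f\in{L}_{\nu}^{2}(0,1)$,
\[
S_{g,\gamma}^{\nu}f=\beta^{-1}\sum_{k\in\mathbb{Z}}\sum_{n\in\mathbb{Z}}\langle\gamma,M_{k}T_{n/\beta}g\rangle_{L^{2}(\mathbb{R})}\,M_{k}T_{n/\beta}f .
\]
The key observation is that ${L}_{\nu}^{2}(0,1)$ is preserved by every $M_{k}$, $k\in\mathbb{Z}$, and by every translation $T_{s}$, $s\in\mathbb{R}$, and that on the orthonormal basis $e_{k,\nu}(t)=e^{2\pi it(\nu+k)}$ these operators act very simply, namely $M_{k}e_{m,\nu}=e_{m+k,\nu}$ and $T_{n/\beta}e_{m,\nu}=e^{-2\pi i(n/\beta)(\nu+m)}e_{m,\nu}$. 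Testing the displayed identity on $f=e_{m,\nu}$ and isolating, for each $k\in\mathbb{Z}$, the coefficient of $e_{m+k,\nu}$, the requirement $S_{g,\gamma}^{\nu}e_{m,\nu}=e_{m,\nu}$ for all $m\in\mathbb{Z}$ becomes exactly the family of relations (\ref{wrp}); this is the same passage as the one from Janssen's representation to the Wexler-Raz relations in the classical setting \cite[Chapter 7]{Charly}. Absolute convergence of the double series, which legitimises this term-by-term evaluation, is supplied by $g,\gamma\in\mathcal{S}_{0}$ via the Wiener/Feichtinger-space estimates already used in the proof of Proposition \ref{prop-conv}.

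For the frame inequalities, assume $S_{g,\gamma}^{\nu}=I$. The upper bound is immediate and uses only $g\in\mathcal{S}_{0}$: by the identity (\ref{ident}) and the first estimate of Proposition \ref{prop-conv},
\[
\sum_{z\in Z_{\beta}}|\langle f,\Sigma_{\nu}(\pi(z)g)\rangle_{{L}_{\nu}^{2}(0,1)}|^{2}=\sum_{n\in\mathbb{Z}}|\langle f,M_{\beta n}g\rangle_{L^{2}(\mathbb{R})}|^{2}\le C\Vert g\Vert_{\mathcal{S}_{0}}^{2}\Vert f\Vert_{{L}_{\nu}^{2}(0,1)}^{2},
\]
so $B=C\Vert g\Vert_{\mathcal{S}_{0}}^{2}$ is an admissible upper bound. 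For the lower bound I would run the standard duality trick: writing $S_{g,\gamma}^{\nu}=D_{\gamma}^{\nu}C_{g}^{\nu}$ with $(D_{\gamma}^{\nu})^{\ast}=C_{\gamma}^{\nu}$, and using $S_{g,\gamma}^{\nu}f=f$,
\[
\Vert f\Vert_{{L}_{\nu}^{2}(0,1)}^{2}=\langle S_{g,\gamma}^{\nu}f,f\rangle_{{L}_{\nu}^{2}(0,1)}=\langle C_{g}^{\nu}f,C_{\gamma}^{\nu}f\rangle_{\ell^{2}(\mathbb{Z})}\le\Vert C_{g}^{\nu}f\Vert_{\ell^{2}(\mathbb{Z})}\,\Vert C_{\gamma}^{\nu}f\Vert_{\ell^{2}(\mathbb{Z})}.
\]
Applying the first estimate of Proposition \ref{prop-conv} now to the window $\gamma$ gives $\Vert C_{\gamma}^{\nu}f\Vert_{\ell^{2}(\mathbb{Z})}\le C^{1/2}\Vert\gamma\Vert_{\mathcal{S}_{0}}\Vert f\Vert_{{L}_{\nu}^{2}(0,1)}$; cancelling one factor $\Vert f\Vert_{{L}_{\nu}^{2}(0,1)}$ (the case $f=0$ being trivial) and squaring yields
\[
\sum_{z\in Z_{\beta}}|\langle f,\Sigma_{\nu}(\pi(z)g)\rangle_{{L}_{\nu}^{2}(0,1)}|^{2}=\Vert C_{g}^{\nu}f\Vert_{\ell^{2}(\mathbb{Z})}^{2}\ge(C\Vert\gamma\Vert_{\mathcal{S}_{0}}^{2})^{-1}\Vert f\Vert_{{L}_{\nu}^{2}(0,1)}^{2},
\]
that is, the lower frame bound $A=(C\Vert\gamma\Vert_{\mathcal{S}_{0}}^{2})^{-1}$. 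Combined with the upper bound, this is precisely the two-sided inequality (\ref{altFrame}), so $\mathcal{G}_{\nu}(g,Z_{\beta})$ is a Gabor frame for ${L}_{\nu}^{2}(0,1)$.

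I expect the only genuinely delicate step to be the first one, namely organising the iterated series (over $k$, over $n$, and inside the Fourier expansion of the correlation functions $G_{n}$) so that the evaluation of Janssen's representation against $\{e_{k,\nu}\}$ is justified and the identities produced are literally (\ref{wrp}). This is exactly where $g,\gamma\in\mathcal{S}_{0}$ is needed, and it is the same hypothesis that, through Proposition \ref{prop-conv}, provides the a priori boundedness of $C_{g}^{\nu}$, $C_{\gamma}^{\nu}$ and $D_{\gamma}^{\nu}$ making the Cauchy-Schwarz step in the lower bound rigorous. Once Janssen's representation and Proposition \ref{prop-conv} are granted, the remainder is soft functional analysis, identical in spirit to the classical $L^{2}(\mathbb{R})$ argument.
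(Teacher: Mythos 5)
Your proof is correct and takes essentially the route the paper intends (the paper states the corollary without proof, as a direct consequence of Janssen's representation): you evaluate Janssen's representation on the orthonormal basis $e_{m,\nu}$, using $M_{k}e_{m,\nu}=e_{m+k,\nu}$ and $T_{n/\beta}e_{m,\nu}=e^{-2\pi i(n/\beta)(\nu+m)}e_{m,\nu}$, to identify $S_{g,\gamma}^{\nu}=I$ with a biorthogonality condition, and then derive the frame bounds from $S_{g,\gamma}^{\nu}=I$ via Proposition \ref{prop-conv} and Cauchy--Schwarz, exactly as in the classical $L^{2}(\mathbb{R})$ argument. The only point worth recording is that your basis computation produces the phases $e^{-2\pi i n(\nu+m)/\beta}$, $m\in\mathbb{Z}$, so the condition literally equivalent to $S_{g,\gamma}^{\nu}=I$ is (\ref{wrp}) with $l$ replaced by $-(\nu+m)$; this $\nu$-shift of the evaluation points is immaterial in all of the paper's applications (which verify either the full relations $\langle\gamma,M_{k}T_{n/\beta}g\rangle=\beta\,\delta_{k,0}\delta_{n,0}$ or an interpolation condition on all of $\mathbb{Z}\setminus\{0\}$), but your version is the accurate one for general $\nu$.
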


\begin{remark}
The original Wexler-Raz relations (\ref{WR}) provide both a necessary and
sufficient condition for Gabor frames in $L^{2}\left( \mathbb{R}\right) $,
while the ${L}_{\nu }^{2}(0,1)$ analogue given by (\ref{wrp})\ is only a
sufficient condition, since we couldn't show that the Gabor frame property
of $\mathcal{G}_{\nu }\left( g,Z_{\beta }\right) $ in ${L}_{\nu }^{2}(0,1)$
assures the existence of the dual window $\gamma $ in (\ref{wrp}).
\end{remark}

\begin{remark}
\label{Method}The Wexler-Raz biorthogonality relations (\ref{WR}) clearly
imply (\ref{wrp}). However, this requires finding a function $\gamma \in 
\mathcal{S}_{0}$ such that $V_{g}\gamma (0,0)=\beta $ and $V_{g}\gamma (k,%
\frac{n}{\beta })=0$, $k,n\in \mathbb{N-\{}0\mathbb{\}}$, while (\ref{wrp})
only requires vanishing in the periodization lattice. Corollary \ref%
{WexlerRaz} assures that $\mathcal{G}_{\nu }\left( g,Z_{\beta }\right) $ is
a frame indexed by the sampling lattice $Z_{\beta }=i\beta \mathbb{Z}$, once
we find a function interpolating on the periodization lattice $\mathbb{Z-\{}0%
\mathbb{\}}$.
\end{remark}

\begin{remark}
Since%
\begin{equation*}
V_{g}\Sigma _{\frac{nl}{\beta }}\gamma (x,\frac{n}{\beta })=\sum_{n\in 
\mathbb{Z}}\left\langle \gamma ,M_{x}T_{\frac{n}{\beta }}g\right\rangle
_{L^{2}\left( \mathbb{R}\right) }e^{2i\pi \frac{nl}{\beta }}\text{,}
\end{equation*}%
to use (\ref{wrp}) one only needs to find a function $\gamma \in \mathcal{S}%
_{0}$ such that\ for all $l\in \mathbb{Z}$, $V_{g}\Sigma _{\frac{nl}{\beta }%
}\gamma $ vanishes on $\mathbb{Z-\{}0\mathbb{\}}$ and such that $V_{g}\Sigma
_{\frac{nl}{\beta }}\gamma (0,\frac{n}{\beta })=\beta $. Moreover, since $%
\gamma \in \mathcal{S}_{0}$, we can assure that $\Sigma _{\frac{nl}{\beta }%
}\gamma \in {L}_{\nu }^{2}(0,1)$ and $V_{g}\Sigma _{\frac{nl}{\beta }}\gamma
\in \mathcal{H}_{g}^{\frac{nl}{\beta }}\left( \mathbb{R}^{2}/\mathbb{Z}%
\right) $. This means that $V_{g}\Sigma _{\frac{nl}{\beta }}\gamma $
satisfies periodicity conditions with respect to $\Gamma ^{\prime }=i\frac{1%
}{\beta }\mathbb{Z}$, with character given by $\chi _{l}(\gamma )=\chi _{l}(i%
\frac{n}{\beta })=e^{2i\pi \frac{nl}{\beta }}$. In section \ref{GaborHermite}
we will implement this method to show our results about Gabor systems with
Hermite-theta functions. The key idea is to find a function $H\in \mathcal{F}%
_{1,l}(\mathbb{C}/\Gamma ^{\prime })$ vanishing on $\mathbb{Z-\{}0\mathbb{\}}
$ and use the surjectivity of the periodization operator (\ref{per}) from $%
\mathcal{F}_{1}(\mathbb{C})$ into $\mathcal{F}_{1,l}(\mathbb{C}/\Gamma
^{\prime })$, to assure the existence of $F\in \mathcal{F}_{1,l}(\mathbb{C}%
/\Gamma ^{\prime })$ such that $H=\mathcal{P}_{\Gamma ^{\prime },l}(F)$;
this is equivalent to the existence of a dual window $\gamma =\mathcal{B}%
^{-1}F\in \mathcal{S}_{0}$ such that (\ref{wrp}) holds.
\end{remark}

\begin{corollary}
\label{LR_L(0,1)} Suppose that $g,\gamma \in \mathcal{S}_{0}$. If $\mathcal{G%
}\left( g,\Lambda _{1,\beta }\right) $ is a Gabor frame for $L^{2}\left( 
\mathbb{R}\right) $, then $\mathcal{G}_{\nu }\left( g,Z_{\beta }\right) $ is
a Gabor frame for ${L}_{\nu }^{2}(0,1)$.
\end{corollary}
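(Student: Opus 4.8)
The plan is to verify the sufficient Wexler--Raz--type condition \eqref{wrp} of Corollary \ref{WexlerRaz}: it suffices to produce a window $\gamma\in\mathcal{S}_{0}$ for which \eqref{wrp} holds, and the natural candidate is the \emph{canonical dual window} of the classical Gabor frame $\mathcal{G}(g,\Lambda_{1,\beta})$ in $L^{2}(\mathbb{R})$.

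So, first I would recall the relevant classical structure of $\mathcal{G}(g,\Lambda_{1,\beta})$ with $\Lambda_{1,\beta}=\mathbb{Z}\times\beta\mathbb{Z}$ and adjoint lattice $\Lambda_{1,\beta}^{0}$. Since this system is a frame for $L^{2}(\mathbb{R})$, its frame operator $S_{g,g}$ is bounded and invertible on $L^{2}(\mathbb{R})$; since moreover $g\in\mathcal{S}_{0}$, Gr\"ochenig--Leinert's Wiener-type lemma for the twisted convolution / rotation algebra \cite{GL} guarantees that the canonical dual window $\gamma^{\circ}:=S_{g,g}^{-1}g$ again belongs to $\mathcal{S}_{0}$. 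Being the canonical dual, $\gamma^{\circ}$ satisfies the classical Wexler--Raz biorthogonality relations \eqref{WR} for the lattice $\Lambda_{1,\beta}$, which, after matching the modulation index to $k$ and the translation index to $n$, read
\begin{equation*}
\beta^{-1}\langle\gamma^{\circ},M_{k}T_{\frac{n}{\beta}}g\rangle_{L^{2}(\mathbb{R})}=\delta_{k,0}\,\delta_{n,0}\qquad\text{for all }k,n\in\mathbb{Z}.
\end{equation*}

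Next I would simply substitute this into the left-hand side of \eqref{wrp}: taking $\gamma=\gamma^{\circ}\in\mathcal{S}_{0}$, for every $k,l\in\mathbb{Z}$,
\begin{equation*}
\beta^{-1}\sum_{n\in\mathbb{Z}}\langle\gamma^{\circ},M_{k}T_{\frac{n}{\beta}}g\rangle_{L^{2}(\mathbb{R})}\,e^{2i\pi\frac{nl}{\beta}}=\sum_{n\in\mathbb{Z}}\delta_{k,0}\,\delta_{n,0}\,e^{2i\pi\frac{nl}{\beta}}=\delta_{k,0},
\end{equation*}
so \eqref{wrp} holds with this choice of $\gamma$. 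Corollary \ref{WexlerRaz} then yields directly that $\mathcal{G}_{\nu}(g,Z_{\beta})$ is a Gabor frame for $L_{\nu}^{2}(0,1)$.

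The only substantive ingredient is the persistence of Feichtinger-algebra membership under inversion of the frame operator, i.e.\ $\gamma^{\circ}\in\mathcal{S}_{0}$; this is precisely the spectral-invariance (``Wiener's lemma'') theorem of Gr\"ochenig--Leinert \cite{GL}, and it is exactly what makes the hypotheses of Corollary \ref{WexlerRaz} available. Everything else is a mechanical rewriting of the classical Wexler--Raz relations, the passage from the classical pointwise biorthogonality to the weaker ``periodized'' identity \eqref{wrp} over the sampling pattern being immediate. An alternative route would be to start from Proposition \ref{FrameOperator}, observe that $S_{g,\gamma^{\circ}}^{\nu}$ is given by the same absolutely convergent series of time-frequency shifts as the classical frame operator $S_{g,\gamma^{\circ}}=I$ on $L^{2}(\mathbb{R})$, and then invoke the $\ell^{1}$-linear independence of the shifts $\{M_{k}T_{n/\beta}\}_{k,n}$ to conclude $S_{g,\gamma^{\circ}}^{\nu}=I$ on $L_{\nu}^{2}(0,1)$; the Wexler--Raz route above is cleaner since Corollary \ref{WexlerRaz} has already absorbed that bookkeeping.
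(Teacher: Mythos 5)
Your proof is correct and follows essentially the same route as the paper: invoke the classical Wexler--Raz biorthogonality relations (\ref{WR}) for $\mathcal{G}(g,\Lambda_{1,\beta})$, observe that the sum over $n$ in (\ref{wrp}) then collapses to the $n=0$ term, and conclude via Corollary \ref{WexlerRaz}. The only difference is cosmetic: the paper takes $\gamma\in\mathcal{S}_{0}$ as a standing hypothesis of the corollary, whereas you derive $\gamma^{\circ}=S_{g,g}^{-1}g\in\mathcal{S}_{0}$ from Gr\"ochenig--Leinert's Wiener lemma, which is exactly the step the paper itself supplies later (via \cite[Theorem 13.2.1]{Charly}) when applying this corollary in the totally positive case.
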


\begin{proof}
If $\mathcal{G}\left( g,\Lambda _{1,\beta }\right) $ is a Gabor frame for $%
L^{2}\left( \mathbb{R}\right) $, and $g,\gamma \in \mathcal{S}_{0}$, then $%
D_{g}$ and $D_{\gamma }$\ are bounded. Thus, the Wexler-Raz biorthogonality
relations (\ref{WR}), 
\begin{equation}
\beta ^{-1}\left\langle \gamma ,M_{k}T_{\frac{n}{\beta }}g\right\rangle
_{L^{2}\left( \mathbb{R}\right) }=\delta _{n,0}\delta _{k,0}\text{ \ for all 
}k,n\in \mathbb{Z}\text{,}  \label{wr}
\end{equation}%
hold \cite[Theorem 7.3.1]{Charly}. Since $g,\gamma \in \mathcal{S}_{0}$
then, by Proposition\ \ref{Proposition0}, $\Sigma _{\nu }g$,$\Sigma _{\nu
}\gamma \in {L}_{\nu }^{2}(0,1)$ and\ by Proposition\ (\ref{FrameOp1}), $%
S_{g,\gamma }$ is bounded.\ This implies (\ref{wrp}) and, by Corollary \ref%
{WexlerRaz}, $\mathcal{G}_{\nu }\left( g,Z_{\beta }\right) $ is a Gabor
frame for ${L}_{\nu }^{2}(0,1)$.
\end{proof}

From this, we easily derive a \emph{new criteria for obstructions for Gabor
frames}.

\begin{corollary}
Let $g,\gamma \in \mathcal{S}_{0}$. If there exists $f\in {L}_{\nu
}^{2}(0,1) $ such that $V_{g}f(0,\beta n)=0$, $n\in \mathbb{N}$\ , then $%
\mathcal{G}\left( g,\Lambda _{1,\beta }\right) $ \emph{is not} a Gabor frame
for $L^{2}\left( \mathbb{R}\right) $.
\end{corollary}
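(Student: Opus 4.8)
The plan is to read the statement as the contrapositive of Corollary \ref{LR_L(0,1)}: it suffices to show that, under the stated hypothesis, the Gabor system $\mathcal{G}_{\nu}\left(g,Z_{\beta}\right)$ with $Z_{\beta}=i\beta\mathbb{Z}$ fails to be a Gabor frame for ${L}_{\nu}^{2}(0,1)$, and then Corollary \ref{LR_L(0,1)} forces $\mathcal{G}\left(g,\Lambda_{1,\beta}\right)$ not to be a Gabor frame for $L^{2}(\mathbb{R})$. Note that the hypotheses $g,\gamma\in\mathcal{S}_{0}$ are exactly those required to invoke that corollary, so $\gamma$ is simply carried along.

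The first step is to invoke the identity (\ref{ident}): since $g\in\mathcal{S}_{0}$, for every $h\in{L}_{\nu}^{2}(0,1)$ one has $\left\langle h,\Sigma_{\nu}(M_{\beta n}g)\right\rangle_{{L}_{\nu}^{2}(0,1)}=\left\langle h,M_{\beta n}g\right\rangle_{L^{2}(\mathbb{R})}=V_{g}h(0,\beta n)$, so the middle term of the frame inequality (\ref{altFrame}) for $\mathcal{G}_{\nu}\left(g,Z_{\beta}\right)$ equals $\sum_{n}\lvert V_{g}h(0,\beta n)\rvert^{2}$; this is precisely the equivalence between (\ref{altFrame}) and the sampling inequality (\ref{samp}) on $\mathcal{H}_{g}^{\nu}\left(\mathbb{C}/\mathbb{Z}\right)$ already established in this section. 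Next, take the function $f$ provided by the hypothesis — which must be nonzero, as otherwise the conclusion would be vacuous — so that $\sum_{n}\lvert V_{g}f(0,\beta n)\rvert^{2}=0$ while $\lVert f\rVert_{{L}_{\nu}^{2}(0,1)}>0$. Hence no constant $A>0$ can satisfy the lower bound in (\ref{altFrame}), and therefore $\mathcal{G}_{\nu}\left(g,Z_{\beta}\right)$ is not a Gabor frame for ${L}_{\nu}^{2}(0,1)$. By Proposition \ref{prop-conv} it remains a Bessel sequence when $g\in\mathcal{S}_{0}$, so the obstruction lies genuinely in the lower frame bound. Applying Corollary \ref{LR_L(0,1)} in contrapositive form then yields that $\mathcal{G}\left(g,\Lambda_{1,\beta}\right)$ is not a Gabor frame for $L^{2}(\mathbb{R})$, which is the claim.

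I do not expect a real obstacle: the substance has already been carried out, residing in Corollary \ref{LR_L(0,1)} (which rests on the Janssen/Wexler--Raz machinery developed above) and in the identification (\ref{ident}) between inner products in ${L}_{\nu}^{2}(0,1)$ and in $L^{2}(\mathbb{R})$. The only points deserving a word of care are that the relevant sampling set is the full lattice $i\beta\mathbb{Z}\subset[0,1)\times\mathbb{R}$, so the vanishing hypothesis is understood over all of it, and that $f$ is tacitly assumed nonzero; neither is a genuine difficulty.
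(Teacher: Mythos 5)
Your proposal is correct and follows essentially the same route as the paper: use the identity (\ref{ident}) to see that all frame coefficients of the nonzero $f$ against $\mathcal{G}_{\nu}\left(g,Z_{\beta}\right)$ vanish, so the lower frame bound fails in ${L}_{\nu}^{2}(0,1)$, and then apply Corollary \ref{LR_L(0,1)} in contrapositive form. Your explicit remarks about $f$ being nonzero and the index set being the full lattice are sensible clarifications of points the paper leaves tacit, but they do not change the argument.
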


\begin{proof}
If $g\in \mathcal{S}_{0}$, By Proposition\ \ref{Proposition0}, $\Sigma _{\nu
}M_{\beta n}g\in {L}_{\nu }^{2}(0,1)$. Thus, if there exists $f\in {L}_{\nu
}^{2}(0,1)$ such that%
\begin{equation*}
\left\langle f,\Sigma _{\nu }M_{\beta n}g\right\rangle _{{L}_{\nu
}^{2}(0,1)}=\left\langle f,M_{\beta n}g\right\rangle _{L^{2}\left( \mathbb{R}%
\right) }=V_{g}f(0,\beta n)=0\text{, \ }n\in \mathbb{N}\text{,}
\end{equation*}%
then, by the inequalities defining the sampling condition (\ref{id}), $%
\mathcal{G}_{\nu }\left( g,Z_{\beta }\right) $ is not a Gabor frame for ${L}%
_{\nu }^{2}(0,1)$. By Corollary \ref{LR_L(0,1)}, $\mathcal{G}\left(
g,\Lambda _{1,\beta }\right) $ is not a Gabor frame for $L^{2}\left( \mathbb{%
R}\right) $.
\end{proof}

We will not explore this in the present paper, but\ since the frame
condition for $\mathcal{G}_{\nu }\left( g,Z_{\beta }\right) $\ depends only
on the size of the parameter $\beta $, it is reasonable to expect situations
where one is able to disprove a frame property for a system $\mathcal{G}%
\left( g,\Lambda _{1,\beta }\right) $.\ 

Corollary \ref{LR_L(0,1)} allows to transfer sufficient conditions about
Gabor frames in $L^{2}\left( \mathbb{R}\right) $, of the form $\mathcal{G}%
\left( g,\Lambda _{1,\beta }\right) $, to $\mathcal{G}_{\nu }\left(
g,Z_{\beta }\right) $, a Gabor frame for ${L}_{\nu }^{2}(0,1)$.\ A chief
example is the family of\emph{\ totally positive functions}.\ By a theorem
of Schoenberg~\cite{sch51}, the Fourier transform of an integrable totally
positive function\ $g$ possesses the factorization 
\begin{equation}
\hat{g}(\xi )=ce^{-\gamma \xi ^{2}}e^{2\pi i\nu \xi }\prod_{j=1}^{N}(1+2\pi
i\nu _{j}\xi )^{-1}e^{-2\pi i\nu _{j}\xi }  \label{eq:t1}
\end{equation}%
with $c>0$, $\nu ,\nu _{j}\in \mathbb{R}$, $\gamma \geq 0$, $N\in \mathbb{N}%
\cup \{\infty \}$ and $0<\gamma +\sum_{j}\nu _{j}^{2}<\infty $. Special
cases, when $N$ is finite, have been considered before in \cite%
{Groe2013,Groe2018} and the full problem has been recently solved for
rational lattices by Gr\"{o}chenig in \cite{GroAIM}. Despite the incremental
nature of the sequence of work \cite{Groe2013,Groe2018,GroAIM}, each of
these papers depends on completely different methods and should be
considered as a whole. In particular, \cite{Gro2018} has an aesthetically
pleasing flavour, since it deals with this intrinsic difficult problem with
a surprising virtuose simplicity. Totally positive functions (\ref{eq:t1})
include Gaussian windows and many other examples, in particular the case of
the hyperbolic secant $\left( e^{at}+e^{-at}\right) ^{-1}$, where the
factorization (\ref{eq:t1}) is infinite, previously proved in \cite{HypSec}.
The strongest result about Gabor frames with totally positive functions,
which includes the case of an infinite factorization ($N=\infty $)\ is the
following recent Theorem.

\textbf{Theorem }\label{GroAIM}\textbf{\ (}Gr\"{o}chenig\textbf{\ }\cite%
{GroAIM}\textbf{). }Let $g\in L^{1}(\mathbb{R})\cap L^{2}(\mathbb{R})$ be a
totally positive function other than the one sided exponential $e^{-t}1_{%
\mathbb{R}^{+}}(t)$, and assume that $\alpha \beta $ is \emph{rational}.
Then, if $\alpha \beta <1$, then $\mathcal{G}\left( g,\Lambda _{\alpha
,\beta }\right) $ is a Gabor frame for $L^{2}\left( \mathbb{R}\right) $.

Combining this with Corollary \ref{LR_L(0,1)}, we obtain:

\begin{corollary}
\label{FrameTotal}Let $g\in L^{1}(\mathbb{R})\cap \mathcal{S}_{0}(\mathbb{R}%
) $ be a totally positive function other than the one sided exponential $%
e^{-t}1_{\mathbb{R}^{+}}(t)$, and assume that $\beta $ is \emph{rational}.
Then, if $\beta <1$, $\mathcal{G}_{\nu }\left( g,Z_{\beta }\right) $ is a
Gabor frame for ${L}_{\nu }^{2}(0,1)$.
\end{corollary}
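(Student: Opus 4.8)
The plan is to deduce Corollary~\ref{FrameTotal} directly from Corollary~\ref{LR_L(0,1)} together with Gr\"{o}chenig's Theorem on totally positive windows. First I would specialize Gr\"{o}chenig's Theorem to the lattice $\Lambda_{1,\beta}=\mathbb{Z}\times\beta\mathbb{Z}$, i.e.\ take $\alpha=1$, so that the product $\alpha\beta=\beta$ is precisely the quantity assumed to be rational. Under the hypothesis $\beta<1$ (equivalently $\alpha\beta<1$), Gr\"{o}chenig's Theorem gives that $\mathcal{G}\left(g,\Lambda_{1,\beta}\right)$ is a Gabor frame for $L^{2}(\mathbb{R})$, provided $g\in L^{1}(\mathbb{R})\cap L^{2}(\mathbb{R})$ is a totally positive function other than the one-sided exponential.

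Next I would check that the hypotheses of Corollary~\ref{LR_L(0,1)} are met. That corollary requires $g,\gamma\in\mathcal{S}_{0}$, where $\gamma$ is the canonical dual window of the frame $\mathcal{G}\left(g,\Lambda_{1,\beta}\right)$. The assumption $g\in\mathcal{S}_{0}(\mathbb{R})$ is explicitly part of the statement of Corollary~\ref{FrameTotal}. For the dual window, I would invoke the standard fact (a consequence of Wiener's lemma for twisted convolution / the Gr\"{o}chenig--Leinert theorem, already in the paper's toolbox via \cite{GL}, and also \cite[Theorem~13.5.4 or Prop.~13.5.4]{Charly}) that if $g\in\mathcal{S}_{0}$ and $\mathcal{G}(g,\Lambda)$ is a frame, then the canonical dual window $\gamma=S^{-1}g$ also lies in $\mathcal{S}_{0}$. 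Hence both $g$ and $\gamma$ are in $\mathcal{S}_{0}$, as required.

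With these two ingredients in hand, Corollary~\ref{LR_L(0,1)} applies verbatim and yields that $\mathcal{G}_{\nu}\left(g,Z_{\beta}\right)$ is a Gabor frame for ${L}_{\nu}^{2}(0,1)$, which is exactly the conclusion. So the proof is essentially a two-line citation chain: Gr\"{o}chenig's Theorem produces the $L^{2}(\mathbb{R})$ frame, the $\mathcal{S}_{0}$-membership of the dual window is automatic, and Corollary~\ref{LR_L(0,1)} transfers the frame property to ${L}_{\nu}^{2}(0,1)$.

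The only subtle point — the ``main obstacle'' — is verifying that a totally positive function in $L^{1}\cap L^{2}$ of the form \eqref{eq:t1} with $g\in\mathcal{S}_{0}(\mathbb{R})$ indeed has dual window in $\mathcal{S}_{0}$; this is where one genuinely needs the Gr\"{o}chenig--Leinert Wiener-type lemma rather than a soft argument. One should also make sure the excluded case (the one-sided exponential $e^{-t}1_{\mathbb{R}^{+}}$) is carried over correctly, and note that a one-sided exponential is not in $\mathcal{S}_{0}$ anyway, so the exclusion is harmless but kept for fidelity to Gr\"{o}chenig's statement. Everything else is a direct application of results already established in the excerpt.
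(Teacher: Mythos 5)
Your proposal is correct and follows essentially the same route as the paper: apply Gr\"{o}chenig's totally-positive theorem with $\alpha=1$ to get the $L^{2}(\mathbb{R})$ frame, verify that the canonical dual window lies in $\mathcal{S}_{0}$, and transfer via Corollary~\ref{LR_L(0,1)}. The only (immaterial) difference is the reference for the dual-window step: the paper invokes the Feichtinger--Gr\"{o}chenig result for rational lattices (Theorem 13.2.1 in \cite{Charly}), whereas you cite the more general Gr\"{o}chenig--Leinert Wiener lemma \cite{GL}; both suffice here since $\beta$ is assumed rational.
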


\begin{proof}
If $\beta <1$, by Theorem \ref{GroAIM}, $\mathcal{G}\left( g,\Lambda
_{\alpha ,\beta }\right) $ is a Gabor frame for $L^{2}\left( \mathbb{R}%
\right) $. This, together with the assumption $g\in \mathcal{S}_{0}(\mathbb{R%
})$ and $\beta $ rational, allows to use Theorem 13.2.1 in \cite[pg. 279]%
{Charly} (originally proved in \cite[Thm. 3.4]{FeiGro}) to conclude that the
frame operator $S_{gg}^{1,\beta }$ is invertible on $\mathcal{S}_{0}(\mathbb{%
R})$ and that the canonical dual window $\gamma ^{0}=\left( S_{gg}^{\alpha
,\beta }\right) ^{-1}g$ belongs to $\mathcal{S}_{0}(\mathbb{R})$. From
Corollary \ref{LR_L(0,1)}, we conclude that $\mathcal{G}_{\nu }\left(
g,Z_{\beta }\right) $ is Gabor frame for ${L}_{\nu }^{2}(0,1)$.
\end{proof}

We restrict here to rational lattices, but there are also results that
exclusively hold for irrational lattices (this is a more structured case,
since the group von Neumann algebra becomes a factor\cite{Omland} ) case of
irrational latttices, which are believed to provide frames for any function
sampled above the critical density. This belief was materialized in the case
of Hurwitz functions in \cite{Belov}, where it is shown that, despite many
rational obstructions, if the latttice is irrational, then if $\alpha \beta
<1$, yields Gabor frames for $L^{2}\left( \mathbb{R}\right) $.

\subsection{\label{GaborHermite}Gabor frames with Hermite-theta windows}

When $\Lambda $ is a $2d$-lattice, sufficient conditions for $\mathcal{G}%
\left( h_{r},\Lambda \right) $ to be a frame in ${L}^{2}(\mathbb{R})$, have
been obtained in \cite{CharlyYura} (fully describing the lattices yielding
frames $\mathcal{G}\left( h_{r},\Lambda \right) $ remains a problem of
ongoing research interest \cite{FauIriIlya,NonFrameHermite}). For the
vectorial system (super-frame) $\mathcal{G}\left( (h_{0},...h_{N-1}),\Lambda
\right) $ in ${L}^{2}(\mathbb{R},\mathbb{C}^{N})$, a full description of $2d$%
-lattices generating a super-frame was obtained in \cite{CharlyYurasuper}.
In \cite{Abr2010}, this has been identified as a sampling result for the
Fock space of polyanalytic functions, $\mathbf{F}^{(N)}\left( \mathbb{C}%
\right) $, and the corresponding interpolation result has also been proved
using the description of multi-sampling and interpolating in $\mathcal{F}%
^{(0)}\left( \mathbb{C}\right) $ has been given, together with the
corresponding result for super Riesz sequences, and this has been shown to
correspond to sampling and interpolation results in $\mathbf{F}_{\nu
}^{(n)}\left( \mathbb{C}\right) $.

By simple computation using Poisson summation formula as well as $\mathcal{F}%
h_{r}=i^{r}h_{r}$, we get%
\begin{equation}
\Sigma _{\nu }(M_{\beta n}h_{r})(t)=i^{r}\sum_{k\in \mathbb{Z}}e^{-\pi
(k-\beta n+\nu )^{2}}H_{r}\left( \sqrt{2\pi }(k-\beta n+\nu )\right)
e^{2i\pi (k+\nu )t}\text{,}
\end{equation}%
where 
\begin{equation*}
H_{r}(t)=\frac{2^{1/4}}{\sqrt{r!}}\left( -\frac{1}{\sqrt{2}}\right)
^{r}e^{t^{2}}\frac{d^{r}}{dt^{r}}\left( e^{-t^{2}}\right)
\end{equation*}%
are the Hermite polynomials. We will use the notation $\theta _{\alpha
,\beta }^{(r)}$ for the Hermite theta functions 
\begin{equation}
\theta _{\alpha ,\beta }^{(r)}(z):=\sum_{k\in \mathbb{Z}}e^{-\pi (k+\alpha
)^{2}+2i\pi (z-\beta )(k+\alpha )}H_{r}\left( \sqrt{2\pi }(k+\alpha +\Im
(z)\right) \text{.}  \label{Hermite_Theta}
\end{equation}%
to write the Gabor system $\mathcal{G}_{\nu }\left( h_{r},Z_{\beta }\right) $%
\ as follows: 
\begin{equation*}
\mathcal{G}_{\nu }\left( h_{r},Z_{\beta }\right) =\{e^{2i\pi \beta
nt}i^{r}\theta _{\nu -\beta n,0}^{(r)}(t)\;|\;n\in \mathbb{Z}\}\subset {L}%
_{\nu }^{2}(0,1)\text{.}
\end{equation*}%
In this section we prove the following result.

\begin{theorem}
\label{FrameHermite}If $\beta <\frac{1}{r+1}$, then $\mathcal{G}_{\nu
}\left( h_{r},Z_{\beta }\right) $ is a Gabor frame for ${L}_{\nu }^{2}(0,1)$.
\end{theorem}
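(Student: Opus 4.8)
The plan is to invoke the sufficient Wexler--Raz condition of Corollary \ref{WexlerRaz}: it is enough to exhibit a window $\gamma\in\mathcal{S}_0$ satisfying, for all $k,l\in\mathbb{Z}$,
\[
\beta^{-1}\sum_{n\in\mathbb{Z}}\left\langle \gamma, M_kT_{n/\beta}h_r\right\rangle_{L^2(\mathbb{R})}\,e^{2i\pi nl/\beta}=\delta_{k,0}.
\]
I would carry this out on the Fock side, as outlined in Remark \ref{Method}. Put $F=\mathcal{B}\gamma$. By the intertwining relation \eqref{barg-identity} together with Proposition \ref{Proposition1}(5), the inner sum over $n$ against the character $e^{2i\pi nl/\beta}$ is exactly the complex periodization $\mathcal{P}_{\Gamma',l}F$ of Lemma \ref{ComplexPeriodization}, with $\Gamma'=i\tfrac1\beta\mathbb{Z}$. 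Since $\mathcal{B}h_r$ is, up to a constant, the monomial $z^r$, the Fock-space pairing against $\tau_w(z^r)$ is a fixed weighted combination of $F(w),F'(w),\dots,F^{(r)}(w)$ with nonzero top coefficient; hence the displayed identity is equivalent to the following interpolation problem on the cylinder: find $H_l:=\mathcal{P}_{\Gamma',l}F\in\mathcal{F}_{1,l}(\mathbb{C}/\Gamma')$ with a zero of order $r+1$ at each point of $\mathbb{Z}\setminus\{0\}$ and a prescribed nonzero normalization at the origin (so that precisely the $k=0$ term survives, with value $\beta$).

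The analytic core, and the step I expect to be the main obstacle, is solving this interpolation problem under the hypothesis $\beta<\tfrac1{r+1}$. I would construct $H_l$ from a theta-type function on $\mathbb{C}/\Gamma'$: let $\vartheta_l$ be a nonzero element of the first Fock-type space on $\mathbb{C}/\Gamma'$ obeying the quasi-periodicity \eqref{functv} with character $\chi_l$ and having zero set exactly $\mathbb{Z}$; then, after dividing by the $(r+1)$-st power of a local factor that cancels the unwanted zero at the origin, the candidate is essentially $\vartheta_l^{\,r+1}$ times the Gaussian prefactor mandated by \eqref{functv}, which vanishes to order $r+1$ on $\mathbb{Z}\setminus\{0\}$. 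Its zeros have density $(r+1)\beta$ per unit area in the strip, and a Jensen-type count (the same one that forces a theta function to have a fixed number of zeros per period) shows that such a nonzero function lies in $\mathcal{F}_{1,l}(\mathbb{C}/\Gamma')$ precisely when $(r+1)\beta<1$. This is the exact analogue of how $\alpha\beta<\tfrac1{r+1}$ enters Gr\"{o}chenig--Lyubarskii's construction of the dual Hermite window on $\mathbb{R}^2$ as an $(r+1)$-st power of a Weierstrass $\sigma$-function (divided by $z^{r+1}$). Getting the theta building block right on the non-compact cylinder, checking $L^1$-membership, and balancing the vanishing orders at $0$ against those at the other integers is where the real work lies.

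To finish, by the surjectivity in Lemma \ref{ComplexPeriodization} there is $F_l\in\mathcal{F}_1(\mathbb{C})$ with $\mathcal{P}_{\Gamma',l}F_l=H_l$; a short argument shows $F_l$ may be chosen independent of $l$ (the data at level $l$ is just the $\chi_l$-twisted $\Gamma'$-periodization of a single set of Fock interpolation data), so $\gamma:=\mathcal{B}^{-1}F\in\mathcal{S}_0$ is one fixed window. Transporting the vanishing and normalization of the $H_l$ back through $\mathcal{B}$ gives \eqref{wrp}, and Corollary \ref{WexlerRaz} then yields that $\mathcal{G}_\nu(h_r,Z_\beta)$ is a Gabor frame for $L^2_\nu(0,1)$; taking $r=0$ recovers the threshold $\beta<1$ of the Gaussian case \cite{ALZ}. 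I should note an alternative shortcut: for those $\beta$ for which the canonical dual of $\mathcal{G}(h_r,\mathbb{Z}\times\beta\mathbb{Z})$ in $L^2(\mathbb{R})$ is already known to lie in $\mathcal{S}_0$ (for instance by Feichtinger--Gr\"{o}chenig when $\beta$ is rational, or in general by the Gr\"{o}chenig--Leinert Wiener lemma for noncommutative tori), one may instead combine Gr\"{o}chenig--Lyubarskii's frame theorem for Hermite windows directly with Corollary \ref{LR_L(0,1)}.
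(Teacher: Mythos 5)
Your proposal follows essentially the same route as the paper's proof: the sufficient Wexler--Raz condition of Corollary \ref{WexlerRaz} is reduced, via the true polyanalytic Bargmann transform (the raising operator $(\partial_z-\pi\overline{z})^r$ applied to $\mathcal{B}\gamma$) and the complex periodization operator of Lemma \ref{ComplexPeriodization}, to producing a $\Gamma'$-quasi-periodic holomorphic function with zeros of multiplicity $r+1$ on $\mathbb{Z}\setminus\{0\}$ whose membership in $\mathcal{F}_{1,l}(\mathbb{C}/\Gamma')$ is precisely what forces $(r+1)\beta<1$; the paper realizes your theta-type building block concretely as the infinite product $g_\beta(z)=\prod_{k>0}(1-e^{2\beta\pi (z-k)})\prod_{k<0}(1-e^{2\beta\pi (k-z)})$ and sets $H_\beta=e^{-\frac{\pi}{2}z^2}g_\beta^{\,r+1}$. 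Even your closing shortcut (Gr\"{o}chenig--Lyubarskii plus Corollary \ref{LR_L(0,1)} and the Wiener-lemma/Feichtinger--Gr\"{o}chenig duals) is the alternative the paper explicitly mentions before opting for the self-contained construction.
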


One could use the result of Gr\"{o}chenig and Lyubarskii \cite{CharlyYura},
where it is proved that, if $\alpha \beta <1$, then $\mathcal{G}_{\nu
}\left( h_{r},\Lambda _{\alpha ,\beta }\right) $ is a Gabor frame for $%
L^{2}\left( \mathbb{R}\right) $, together with Corollary \ref{LR_L(0,1)},
Theorem 13.2.1 in \cite[pg. 279]{Charly} and Theorem 1.2 in \cite{GL}. We
give a direct, self-contained proof that doesn't require the results from 
\cite{CharlyYura} and exemplifies how to apply the method described in
Remark \ref{Method}, based on the condition (\ref{wrp}).

We have been assuming, without loss of generality, that $\Gamma =\mathbb{Z}$%
. But for the results in this section, it is worth to use the flexibility
offered by introduzing a multiplying parameter $\alpha $ and consider $%
\Gamma =\mathbb{Z}\alpha $. Then the signal domain is the interval $%
(0,\alpha )$ and the flat cylinder phase space is $\Lambda (\mathbb{Z}\alpha
):\mathbb{=}[0,\alpha )\times \mathbb{R}$. A change of variables can
transform the signal domain in $(-\alpha /2,\alpha /2)$ and the flat
cylinder into the more symmetric alternative $\Lambda ^{\prime }((\mathbb{Z+}%
\frac{1}{2})\alpha ):\mathbb{=}[-\alpha /2,\alpha /2)\times \mathbb{R}$.
Gabor analysis of signals defined in intervals of the form $(-\alpha
/2,\alpha /2)$ has been recently considered for the problem of uniqueness of
STFT phase retrieval \cite{LukPhi}. The same problem of uniqueness of STFT
phase retrieval has been considered, for functions bandlimited to $\left[
-2B,2B\right] $, in \cite{AlaiWell}, leading to the horizontal flat cylinder
domain $\mathbb{R}\times \left[ -2B,2B\right] $ as phase space. This
suggests, as a possible variant of the theory developed in this paper, a
STFT analysis of band-limited functions, resulting from imposing periodic
conditions in frequency and properly adapting the scheme of this paper.
Despite not directly dealing with problems of phase retrieval, the natural
occurence of flat cylinder STFT phase spaces in such problems was one of our
motivations. We leave as a question for the interested reader if our
sampling and interpolation results and the implied statements about sets of
uniqueness can replace the role of the completeness results for exponential
systems and Paley-Wiener spaces used in the proofs of \cite{LukPhi}.

\subsection{Proof of Theorem \protect\ref{FrameHermite}}

Consider the lattice $\Gamma ^{\prime }=i\frac{1}{\beta }\mathbb{Z}$\ and\
the Banach space $\mathcal{F}_{1,l}(\mathbb{C}/\Gamma ^{\prime })$ 
\begin{equation*}
\mathcal{F}_{1,l}(\mathbb{C}/\Gamma ^{\prime }):=\left\{ f\in Hol(\mathbb{C}%
)\ \text{satisfying (\ref{funct-equa-l}):}\quad \;\int_{\mathbb{C}/\Gamma
^{\prime }}|f(z)|e^{-\frac{\pi }{2}|z|^{2}}<+\infty \right\} \text{.}
\end{equation*}%
To use Corollary \ref{WexlerRaz}, we need a function $\gamma \in \mathcal{S}%
_{0}$ such that 
\begin{equation}
\beta ^{-1}\sum_{n\in \mathbb{Z}}\left\langle \gamma ,M_{k}T_{\frac{n}{\beta 
}}h_{r}\right\rangle _{L^{2}\left( \mathbb{R}\right) }e^{2i\pi \frac{nl}{%
\beta }}=\delta _{k,0}\text{ \ for all }l\in \mathbb{Z}\text{.}
\label{rel-int}
\end{equation}%
First observe that, for\ $\gamma \in \mathcal{S}_{0}$ and, 
\begin{equation}
\left\langle \gamma ,\pi (\overline{z})h_{r}\right\rangle _{{L}^{2}(\mathbb{R%
})}=\frac{1}{\sqrt{\pi ^{r}r!}}e^{i\pi x\xi -\frac{\pi }{2}\left\vert
z\right\vert ^{2}}\left( \partial _{z}-\pi \overline{z}\right)
^{r}F(z),\quad z=x+i\xi \text{.}  \label{formula-hermite-win}
\end{equation}%
We get that the relation (\ref{rel-int}) is equivalent to 
\begin{equation*}
\frac{1}{\beta ^{-1}\sqrt{\pi ^{r}r!}}\sum_{n\in \mathbb{Z}}e^{2i\pi \frac{nl%
}{\beta }}e^{-i\pi k\frac{n}{\beta }-\frac{\pi }{2}\left\vert k-i\frac{n}{%
\beta }\right\vert ^{2}}\left( \partial _{z}-\pi \overline{z}\right)
^{r}F(z)|_{(z=z+\gamma )}=\delta _{k,0}\text{ \ for all }l\in \mathbb{Z}%
\text{.}
\end{equation*}%
This suggests to find a periodized function 
\begin{equation}
H(z)=\mathcal{P}_{\Gamma ^{\prime },l}(F)(z)=\sum_{\gamma \in \Gamma
^{\prime }}\chi _{l}(\gamma )e^{-\pi z\overline{\gamma }-\frac{\pi }{2}%
|\gamma |^{2}}F(z+\gamma )\text{;}\quad \Gamma ^{\prime }=i\frac{1}{\beta }%
\mathbb{Z}\text{.}  \label{H}
\end{equation}%
Since the raising operator $\partial _{z}-\pi \bar{z}$ commutes with the
Weyl operator $W_{a}(F)(z)=e^{-\pi z\bar{a}-\frac{\pi }{2}|a|^{2}}F(z+a)$,
applying it $r$ times to both sides of the equation gives 
\begin{equation}
\left( \partial _{z}-\pi \overline{z}\right) ^{r}H(z)=\sum_{\gamma \in
\Gamma ^{\prime }}\chi _{l}(\gamma )e^{-\pi z\overline{\gamma }-\frac{\pi }{2%
}|\gamma |^{2}}\left( \partial _{z}-\pi \overline{z}\right)
^{r}F(z)|_{z=z+\gamma }\text{;}\quad \Gamma ^{\prime }=i\frac{1}{\beta }%
\mathbb{Z}\text{.}
\end{equation}%
Taking $z=k$ and multiplying by the constant $\frac{1}{\beta ^{-1}\sqrt{\pi
^{r}r!}}$, we see that the function $H$ must satisfy 
\begin{equation}
\frac{1}{\beta ^{-1}\sqrt{\pi ^{r}r!}}\left( \partial _{z}-\pi \overline{z}%
\right) ^{r}H(z)|_{z=k}=\delta _{k,0}\quad \text{for all }l\in \mathbb{Z}%
\text{.}  \label{eqi-blue}
\end{equation}%
Let $\chi _{l}$ be the character given by $\chi _{l}(\gamma )=\chi _{l}(i%
\frac{n}{\beta })=e^{2i\pi \frac{nl}{\beta }}$. We need a holomorphic
function $H_{\beta }(z)$ satisfying the functional equation 
\begin{equation}
H_{\beta }(z+\gamma )=\chi _{l}(\gamma )e^{\frac{\pi }{2}|\gamma |^{2}+\pi z%
\overline{\gamma }}H_{\beta }(z)  \label{funct-equa-l}
\end{equation}%
with respect to the lattice $\Gamma ^{\prime }=i\frac{1}{\beta }\mathbb{Z}$,
vanishing on $\mathbb{Z}\setminus \{0\}$, and with every zero of $H_{\beta
}(z)$ of multiplicity $r+1$. One can verify from (\ref{H}), that any
periodized function $H(z)=\mathcal{P}_{\Gamma ^{\prime },l}(F)(z)$ satisfies
(\ref{funct-equa-l}). Now, set%
\begin{equation*}
g_{\beta }(z)=\prod_{k>0}(1-e^{2\beta \pi (z-k)})\times
\prod_{k<0}(1-e^{2\beta \pi (k-z)})\text{.}
\end{equation*}%
The product converges uniformly on every compact subset of $\mathbb{C}$,
thus $g_{\beta }$ is a holomorphic function on $\mathbb{C}$. Furthermore,
this function vanishes on $\mathbb{Z}-\{0\}$ and is periodic with respect to 
$\Gamma ^{\prime }$. Using \cite[Lemma 6]{ALZ}, we conclude that $g_{\beta }$
satisfies 
\begin{equation*}
\left\vert g_{\beta }(z)\right\vert \leq C_{1}e^{\pi \beta |x|^{2}+\gamma
^{+}|x|};\quad z=x+iy
\end{equation*}%
and 
\begin{equation}
\left\vert g_{\beta }(z)\right\vert \geq C_{1}e^{\pi \beta |x|^{2}-\gamma
^{-}|x|}d(z,\mathbb{Z})\text{.}  \label{lower-bd}
\end{equation}%
Now, we consider 
\begin{equation}
H_{\beta }(z)=e^{-\frac{\pi }{2}z^{2}}\left( g_{\beta }(z)\right) ^{r+1}%
\text{.}  \label{Hbeta}
\end{equation}%
Then $H_{\beta }(z)$ satisfies (\ref{funct-equa-l}) and vanishes on $\mathbb{%
Z}-\{0\}$. Moreover, 
\begin{align*}
\left\Vert H_{\beta }\right\Vert _{\mathcal{F}_{1,l}(\mathbb{C}/\Gamma
^{\prime })}& =C\left\Vert H_{\beta }\right\Vert _{\mathcal{F}_{1,l}(\mathbb{%
C}/\Gamma ^{\prime })}=\int_{\mathbb{C}/\Gamma ^{\prime }}|g_{\beta
}(z)|^{r+1}e^{-\pi |x|^{2}}dA(z) \\
& \leq C\int_{\mathbb{R}\times \lbrack 0,\frac{1}{\beta }]}e^{\pi (r+1)\beta
|x|^{2}+\gamma ^{+}|x|-\pi |x|^{2}}dxdy\text{.}
\end{align*}%
Since $\beta <\frac{1}{r+1}$, we see that this function belongs to $\mathcal{%
F}_{1,l}(\mathbb{C}/\Gamma ^{\prime })$. Now, Lemma \ref%
{ComplexPeriodization} assures the existence of a function $F\in \mathcal{F}%
_{1}(\mathbb{C})$ such that 
\begin{equation*}
H_{\beta }(z)=\mathcal{P}_{\Gamma ^{\prime },l}(F)(z)\text{.}
\end{equation*}%
We conclude that (\ref{rel-int}) holds with $\gamma =(\mathcal{B}%
^{(r)})^{-1}F\in \mathcal{S}_{0}$. This completes the proof of the theorem.

\section{Beurling density in the flat cylinder}

A sequence $Z=\{z_{k}\}\subset \Lambda (\Gamma )$ is\emph{\ separated} if 
\begin{equation*}
\delta :=\inf_{j\neq k}\left\vert z_{j}-z_{k}\right\vert >0\text{.}
\end{equation*}%
The constant $\delta $ is called the \emph{separation constant} of $Z$. A
separated sequence $Z$ is a \emph{sampling sequence} for the space $\mathcal{%
H}_{g}^{\nu }\left( \mathbb{C}/\mathbb{Z}\right) $, whenever there exist
constants $A,B>0$ such that, for all $f\in {L}_{\nu }^{2}(0,1)$\ the
sampling inequality holds:%
\begin{equation}
A\left\Vert V_{g}f\right\Vert _{\mathcal{H}_{g}^{\nu }\left( \mathbb{C}/%
\mathbb{Z}\right) }^{2}\leq \sum_{z\in Z}\left\vert V_{g}f(z)\right\vert
^{2}\leq B\left\Vert V_{g}f\right\Vert _{\mathcal{H}_{g}^{\nu }\left( 
\mathbb{C}/\mathbb{Z}\right) }^{2}\text{.}  \label{samplingGabor}
\end{equation}%
We have seen in the introductory discussion of this section that this is
equivalent to $\mathcal{G}_{\nu }\left( g,Z\right) :=\{\Sigma _{\nu }\left(
\pi (z)g\right) ,\;z\in Z\}$ being a \emph{Gabor frame }for ${L}_{\nu
}^{2}(0,1)$, according to (\ref{frame}). For the choices $g(t)=h_{r}$, using
the unitarity of the Bargmann-type transforms discussed in Section 3, (\ref%
{samplingGabor}) can be written as a sampling inequality for $\mathcal{F}%
_{\nu }^{(r)}(\mathbb{C}/\mathbb{Z})$: 
\begin{equation}
A\left\Vert F\right\Vert _{\mathcal{F}_{\nu }^{(r)}(\mathbb{C}/\mathbb{Z}%
)}^{2}\leq \sum_{z\in Z}\left\vert F(z)\right\vert ^{2}e^{-\pi |z|^{2}}\leq
B\left\Vert F\right\Vert _{\mathcal{F}_{\nu }^{(r)}(\mathbb{C}/\mathbb{Z}%
)}^{2}\text{.}  \label{sampling}
\end{equation}%
Now we will define \emph{Gabor Riesz basic sequences} as the sequences
leading to \emph{interpolating sequences for the range of the STFT} on ${L}%
_{\nu }^{2}(0,1)$. This differs a bit from the most common definition, but
it is equivalent to the concept of Gabor Riesz basis of their span,
considered in \cite{Charly}.

\begin{definition}
A sequence $Z$ is said to be \emph{interpolating} for $\mathcal{H}_{g}^{\nu
}\left( \mathbb{C}/\mathbb{Z}\right) $, if there exists a constant $C$ such,
for every sequence $(a_{k})_{k\in \mathbb{Z}}$ such that%
\begin{equation*}
\left( a_{k}\right) _{k\in \mathbb{Z}}\in \ell ^{2}(\mathbb{Z})\text{,}
\end{equation*}%
one can find an interpolating function $f\in \mathcal{H}_{g}^{\nu }\left( 
\mathbb{C}/\mathbb{Z}\right) $, such that$\ f(z_{k})=a_{k}$ for all $%
z_{k}\in Z$ and%
\begin{equation}
\Vert f\Vert _{\mathcal{H}_{g}^{\nu }\left( \mathbb{C}/\mathbb{Z}\right)
}\leq C\Vert f|Z\Vert _{2,\alpha }\text{.}  \label{stab}
\end{equation}%
If $Z$ is interpolating for $\mathcal{H}_{g}^{\nu }\left( \mathbb{C}/\mathbb{%
Z}\right) $, then $\mathcal{G}_{\nu }\left( g,Z\right) $ is called a \emph{%
Gabor} \emph{Riesz basic sequence} for ${L}_{\nu }^{2}(0,1)$.
\end{definition}

In \cite{ALZ},\ we were able to obtain a complete description of the
sampling and interpolating sequences in the space of entire functions $%
\mathcal{F}_{\nu }(\mathbb{C}/\mathbb{Z})$, after introducing the concepts
of upper and lower Beurling density for the vertical strip. The upper
density concept will be used again in the result about interpolation in the
last section. We take $\Lambda (\mathbb{Z})=[0,1]\times \mathbb{R}$ as a
fundamental domain representing the group $\mathbb{C}/\mathbb{Z}$. Write $%
I_{w,r}=[0,1]\times \lbrack \Im (w)-\frac{r}{2},\Im (w)+\frac{r}{2}]$. Then $%
\lim_{r\rightarrow +\infty }I_{w,r}=[0,1]\times \mathbb{R}$. For a given
sequence $Z$ of distinct numbers in $\Lambda (\mathbb{Z})$, let $%
n(Z,I_{w,r}) $ denote the number of points in $Z\cap I_{w,r}$. The lower and
upper Beurling densities of $Z$ are given, respectively, by 
\begin{equation*}
D^{-}(Z)=\liminf_{r\rightarrow +\infty }\inf_{w\in \mathbb{C}/\mathbb{Z}}%
\frac{n(Z,I_{w,r})}{r}
\end{equation*}%
and 
\begin{equation*}
D^{+}(Z)=\limsup_{r\rightarrow +\infty }\sup_{w\in \mathbb{C}/\mathbb{Z}}%
\frac{n(Z,I_{w,r})}{r}\text{.}
\end{equation*}

\begin{example}
\label{regular}When $Z$ is the regular lattice $Z_{\beta }=i\beta \mathbb{Z}$%
, $D^{-}(Z_{\beta })=D^{+}(Z_{\beta })=\frac{1}{\beta }$.
\end{example}

\subsection{Gabor frames with Gaussian-theta windows}

For the choice $g(t)=h_{0}=e^{-\pi t^{2}}$ we have a complete description of
the sequences yielding frames in ${L}_{\nu }^{2}(0,1)$. From (\ref{BargDef}%
), 
\begin{equation*}
\mathcal{B}f(z)=e^{-i\pi x\xi +\frac{\pi }{2}\left\vert z\right\vert
^{2}}V_{g}(\overline{z})=e^{-i\pi x\xi +\frac{\pi }{2}\left\vert
z\right\vert ^{2}}\left\langle f,\pi (\overline{z})g\right\rangle _{{L}^{2}(%
\mathbb{R})}\text{.}
\end{equation*}%
Thus, since $\mathcal{B}:{L}_{\nu }^{2}(0,1)\rightarrow \mathcal{F}_{\nu }(%
\mathbb{C}/\mathbb{Z})$ is unitary, given $F\in \mathcal{F}_{\nu }(\mathbb{C}%
/\mathbb{Z})$, there exists $f\in {L}_{\nu }^{2}(0,1)$ such that $F(z)=%
\mathcal{B}f(z)$. We observe that%
\begin{equation*}
\Sigma _{\nu }(\pi (z_{1},z_{2})h_{0})(t)=e^{2i\pi z_{2}t}\theta
_{z_{2},z_{1}}(-t,i)
\end{equation*}%
and that (\ref{sampling}) is equivalent to $\mathcal{G}_{\nu }\left(
h_{0},Z\right) $ being a Gabor frame for ${L}_{\nu }^{2}(0,1)$. The
following result is proved in \cite{ALZ}:

\begin{theorem}
\label{cct} $Z$ is sampling for $\mathcal{F}_{\nu }^{2}(\mathbb{C}/\mathbb{Z}%
)$, or, equivalently, $\mathcal{G}_{\nu }\left( h_{0},Z\right) =\{e^{2i\pi
z_{2}t}\theta _{z_{2},z_{1}}(-t,i):\quad z=(z_{1},z_{2})\in Z\}$ is a \emph{%
Gabor frame} for ${L}_{\nu }^{2}(0,1)$, iff $D^{-}(Z)>1$, while $Z$ is
interpolating for $\mathcal{F}_{\nu }^{2}(\mathbb{C}/\mathbb{Z})$ iff $%
D^{+}(Z)<1$.
\end{theorem}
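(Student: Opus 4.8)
The plan is to reduce the statement to the classical Beurling-type density theorems for the Bargmann--Fock space $\mathcal{F}_{2}(\mathbb{C})$ \cite{seip1,seip2,Ly} by fiberizing over the action of $\mathbb{Z}$ by horizontal (Weyl) translations. Two observations make this natural. First, $\mathcal{F}_{2,\nu}(\mathbb{C}/\mathbb{Z})$ is a bona fide Fock-type space with $\nu$-uniform geometry: from $K^{\nu}(z,w)=\sqrt{2}\,e^{\frac{\pi}{2}(z^{2}+\overline{w}^{2})}\theta_{\nu,0}(z-\overline{w}\,|\,i)$ one computes $K^{\nu}(z,z)e^{-\pi|z|^{2}}=\sqrt{2}\sum_{k\in\mathbb{Z}}e^{-2\pi(k+\nu+\Im z)^{2}}$, a positive $\mathbb{Z}$-periodic quantity bounded above and below by constants independent of $\nu$, so the usual Bergman-kernel estimates of Fock theory carry over with $\nu$-uniform constants. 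Second, exactly as the Zak transform gives $L^{2}(\mathbb{R})\cong\int_{[0,1)}^{\oplus}L_{\nu}^{2}(0,1)\,d\nu$, the Bargmann transform gives $\mathcal{F}_{2}(\mathbb{C})\cong\int_{[0,1)}^{\oplus}\mathcal{F}_{2,\nu}(\mathbb{C}/\mathbb{Z})\,d\nu$, with fiber map $F\mapsto F_{\nu}=\sum_{k\in\mathbb{Z}}e^{-2\pi ik\nu}\tau_{k}F$ (a Poincar\'{e} series, cf.\ Proposition \ref{Proposition1}(5)) and $\|F\|_{\mathcal{F}_{2}(\mathbb{C})}^{2}=\int_{[0,1)}\|F_{\nu}\|_{\mathcal{F}_{2,\nu}(\mathbb{C}/\mathbb{Z})}^{2}\,d\nu$.

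Given a separated $Z\subset[0,1)\times\mathbb{R}$, set $\widetilde{Z}:=Z+\mathbb{Z}\subset\mathbb{C}$. This set is separated and, since the strips $I_{w,r}$ have width one, its ordinary planar Beurling density equals $D^{\pm}(Z)$. The $\mathbb{Z}$-periodicity of $|F(z)|^{2}e^{-\pi|z|^{2}}$ together with Parseval in $\nu$ yields $\sum_{\widetilde{z}\in\widetilde{Z}}|F(\widetilde{z})|^{2}e^{-\pi|\widetilde{z}|^{2}}=\int_{[0,1)}\bigl(\sum_{z\in Z}|F_{\nu}(z)|^{2}e^{-\pi|z|^{2}}\bigr)\,d\nu$, so by the standard direct-integral argument the sampling inequality for $\widetilde{Z}$ in $\mathcal{F}_{2}(\mathbb{C})$ is equivalent to the sampling inequality for $Z$ in $\mathcal{F}_{2,\nu}(\mathbb{C}/\mathbb{Z})$ holding, with $\nu$-uniform constants, for almost every $\nu$; an identical argument with the data-norm treats interpolation. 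Quoting the classical theorem (a separated set is sampling for $\mathcal{F}_{2}(\mathbb{C})$ iff its lower density exceeds $1$, interpolating iff its upper density is below $1$) and reading back through $D^{\pm}(\widetilde{Z})=D^{\pm}(Z)$, we obtain: $D^{-}(Z)>1$ iff $Z$ is sampling for $\mathcal{F}_{2,\nu}(\mathbb{C}/\mathbb{Z})$ for a.e.\ $\nu$, and $D^{+}(Z)<1$ iff $Z$ is interpolating for a.e.\ $\nu$.

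The last step is to remove the ``a.e.'' and reach the particular $\nu$ of $L_{\nu}^{2}(0,1)$. For this I would prove that $\nu\mapsto S_{\nu}$ is norm-continuous, where $S_{\nu}$ is the frame operator on $\mathcal{F}_{2,\nu}(\mathbb{C}/\mathbb{Z})$ of the normalized reproducing-kernel system indexed by $Z$: since $Z$ is separated and confined to a width-one strip, the kernels $K^{\nu}(\cdot,z)$ have Gaussian decay in vertical separation (uniformly in $\nu$) and depend smoothly on $\nu$ (being theta functions), and a Cotlar--Stein-type estimate then bounds $\|S_{\nu}-S_{\nu_{0}}\|$ by a constant times $|\nu-\nu_{0}|$. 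Hence the bottom of the spectrum of $S_{\nu}$ (the optimal lower sampling constant) and the corresponding interpolation constant are continuous in $\nu$; being pinned to one fixed positive value on a full-measure set of $\nu$, they are positive for every $\nu$. This gives the theorem, and Example \ref{regular} recovers $\beta<1$ (resp.\ $\beta>1$) for $Z_{\beta}=i\beta\mathbb{Z}$, consistent with Theorem \ref{FrameHermite} at $r=0$.

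I expect the main obstacle to be exactly this $\nu$-uniformity: organizing the fiber estimates with a single pair of constants and establishing the norm-continuity of $S_{\nu}$, which requires quantifying how the theta kernel and its off-diagonal decay vary with $\nu$ (the density bookkeeping $D^{\pm}(\widetilde{Z})=D^{\pm}(Z)$ and the direct-integral identities are routine). If this packaging proves awkward, a self-contained alternative that keeps $\nu$ fixed and avoids fiberization altogether is available, paralleling the proof of Theorem \ref{FrameHermite}: establish the necessity of the density bounds by a Ramanathan--Steger / Gr\"{o}chenig homogeneous-approximation argument applied to $K^{\nu}(z,w)=\sqrt{2}\,e^{\frac{\pi}{2}(z^{2}+\overline{w}^{2})}\theta_{\nu,0}(z-\overline{w}\,|\,i)$, and the sufficiency by constructing interpolating (resp.\ generating) functions out of cylinder Weierstrass products of the type $g_{\beta}$ of Section 4, with norms controlled in $\mathcal{F}_{1,\nu}(\mathbb{C}/\mathbb{Z})$, the sampling half then following from the standard duality between sampling and interpolation together with a Beurling-type perturbation.
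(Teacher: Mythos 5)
First, note that this paper does not actually prove Theorem \ref{cct}: it is quoted from the companion paper \cite{ALZ} (``The following result is proved in \cite{ALZ}''), and the surrounding material --- the Weierstrass-type products $g_{\beta}$, the explicit sampling formula, and the proof of Theorem \ref{Interpolation}, which begins by reducing to sequences uniformly close to $i\tfrac{1}{\beta}\mathbb{Z}$ ``as in \cite[Theorem 6]{ALZ}'' --- makes clear that \cite{ALZ} follows the direct Seip-style route on the cylinder. That is essentially your \emph{fallback} plan. Your \emph{main} plan (fiberize $\mathcal{F}_{2}(\mathbb{C})$ over $\nu$ via the Poincar\'{e}/Zak decomposition and quote Seip--Wallsten--Lyubarskii in the plane) is a genuinely different route, and its ingredients that you call routine do check out: I verified your diagonal-kernel identity $K^{\nu}(z,z)e^{-\pi|z|^{2}}=\sqrt{2}\sum_{k}e^{-2\pi(k+\nu+\Im z)^{2}}$, the Parseval identity $\sum_{\widetilde{z}\in\widetilde{Z}}|F(\widetilde{z})|^{2}e^{-\pi|\widetilde{z}|^{2}}=\int_{0}^{1}\sum_{z\in Z}|F_{\nu}(z)|^{2}e^{-\pi|z|^{2}}d\nu$, and the density bookkeeping $D^{\pm}(\widetilde{Z})=D^{\pm}(Z)$ (with the caveat that separation of $Z$ must be taken in the quotient metric for $\widetilde{Z}$ to be separated in $\mathbb{C}$). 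If completed, this would buy a cheap derivation of the sufficiency halves from the classical planar theorems, at the price of the continuity/perturbation machinery; the direct route of \cite{ALZ} instead builds the extremal functions on the cylinder itself and yields by-products such as the explicit reconstruction formula.

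The genuine gap is in the necessity halves. Your direct-integral equivalence says: $\widetilde{Z}$ is sampling for $\mathcal{F}_{2}(\mathbb{C})$ iff $Z$ is sampling for $\mathcal{F}_{2,\mu}(\mathbb{C}/\mathbb{Z})$ for \emph{almost every} $\mu$ with uniform constants. By modulation covariance the optimal lower bound satisfies $A(\mu)=A_{0}(Z-i\mu)$, so your continuity claim should be restated as jitter stability of the frame bounds of $\mathcal{H}_{h_{0}}^{0}$ under vertical translations of $Z$ (the operators $S_{\mu}$ live on different Hilbert spaces, so ``norm continuity of $\nu\mapsto S_{\nu}$'' is not well posed as written); once restated, Schur-type estimates with the Gaussian off-diagonal decay do give $|A(\mu)-A(\mu')|\leq C(\delta,B)\,|\mu-\mu'|$, and continuity plus an essential infimum pinned away from zero yields $A(\nu)>0$ for your particular $\nu$. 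That settles \emph{sufficiency}. But for \emph{necessity} the implication runs the wrong way: knowing $A(\nu)>0$ at the single given $\nu$ only gives $A(\mu)>0$ on a neighbourhood of $\nu$ (with constants that degrade linearly), not for a.e.\ $\mu\in[0,1)$ with a uniform lower bound, which is what you would need to feed back into the planar theorem and conclude $D^{-}(Z)>1$; arguing that $A(\mu)>0$ for all $\mu$ because sampling is translation invariant would be circular, since that invariance is a consequence of the density characterization you are trying to prove. The same issue affects ``$Z$ interpolating $\Rightarrow D^{+}(Z)<1$''. So the necessity directions require a separate argument carried out on the cylinder itself --- a homogeneous-approximation/Ramanathan--Steger comparison with the lattice $i\beta\mathbb{Z}$, or a Landau-type concentration argument with the kernel $K^{\nu}$ --- which you mention only in the fallback plan and which must be incorporated into the main plan for the ``iff'' to be complete.
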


From Example \ref{regular}, when $Z=Z_{\beta }=i\beta \mathbb{Z}$, $\mathcal{%
G}_{\nu }\left( h_{0},Z_{\beta }\right) =\{e^{2i\pi \beta t}\theta _{\beta
,0}(-t,i)\}$. In this special case, $\mathcal{G}_{\nu }\left( h_{0},Z_{\beta
}\right) $ is a Gabor frame iff $\frac{1}{\beta }>1$ and a Gabor Riesz
sequence iff $\frac{1}{\beta }<1$.\ 

As a step in the proof of Theorem \ref{cct}, the following explicit sampling
formula which may be used for signal reconstruction as an alternative to
frame expansions, has been obtained.

\begin{proposition}
Let $Z=\{z_{n}\}$ be a sequence uniformly close to $Z_{\beta }$, and assume $%
\frac{1}{\beta }>1$. Let 
\begin{equation*}
g_{\beta }(z)=e^{\frac{\pi }{2}z^{2}}\prod_{k\geq 0}(1-e^{2i\pi
(z_{k}-z)})\times \prod_{k\leq 0}(1-e^{2i\pi (z-z_{k})}).
\end{equation*}
Then, for $f\in \mathcal{F}_{\nu }^{\infty }\left( \mathbb{C}/\mathbb{Z}%
\right) $, we have 
\begin{equation*}
f(z)=2i\pi \sum_{k\in \mathbb{Z}}f(z_{k})\frac{g_{\beta }(z)}{g_{\beta
}^{\prime }(z_{k})(1-e^{2i\pi (z_{k}-z)})}\text{,}
\end{equation*}%
with uniform convergence on compacts of $\mathbb{C}/\Gamma $.
\end{proposition}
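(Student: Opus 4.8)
The plan is to read the formula as a residue identity on a fundamental strip of the cylinder, with all the analytic weight carried by two–sided estimates for $g_\beta$. First I would check that $g_\beta$ is an entire function whose zero set is exactly $Z$, each zero simple: uniform closeness to $Z_\beta=i\beta\mathbb Z$ gives $\Im z_k=\beta k+O(1)$, so on any compact set $|e^{2i\pi(z_k-z)}|=e^{-2\pi(\Im z_k-\Im z)}$ decays geometrically as $k\to+\infty$, and $|e^{2i\pi(z-z_k)}|$ likewise as $k\to-\infty$, hence both products converge locally uniformly. The technical core is the analogue for $g_\beta$ of the estimates in \cite[Lemma 6]{ALZ} already used for \eqref{lower-bd}: writing $w=u+iv$ with $u\in[0,1)$ and $d(w,Z)$ the distance to $Z$, one gets $|g_\beta(w)|\le C\,e^{\pi v^{2}\frac{2-\beta}{2\beta}}e^{\gamma|v|}$ and $|g_\beta(w)|\ge C^{-1}e^{\pi v^{2}\frac{2-\beta}{2\beta}}e^{-\gamma|v|}\,d(w,Z)$, together with $|g_\beta'(z_k)|\asymp e^{\pi(\Im z_k)^{2}\frac{2-\beta}{2\beta}}$ at the simple zeros. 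Here $1/\beta>1$ is essential: it forces $\tfrac{2-\beta}{2\beta}>\tfrac12$, so $|g_\beta|$ grows in the imaginary direction strictly faster than the Fock weight $e^{\pi|w|^{2}/2}$.

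Next, I would fix $z$ with $0<\Re z<1$ and integrate
\[
\Phi(w):=\frac{f(w)\,g_\beta(z)}{g_\beta(w)\,(1-e^{2i\pi(w-z)})}
\]
over $\partial R_N$, where $R_N=[0,1]\times[-N,N]$ and $N$ is chosen half-way between two consecutive heights $\beta k$ so that $\partial R_N$ stays at distance $\gtrsim\beta$ from $Z$. Both $f$ and $g_\beta$ satisfy the functional equation \eqref{funct-equa} (for $\nu\neq0$ one first multiplies $g_\beta$ by the harmless factor $e^{2i\pi\nu z}$ so that $f/g_\beta$ is genuinely $\mathbb Z$–periodic), and $1-e^{2i\pi(w-z)}$ has period $1$; hence $\Phi(w+1)=\Phi(w)$ and the two vertical sides of $\partial R_N$ cancel. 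On the horizontal edges $\Im w=\pm N$, $u\in[0,1]$, I combine $|f(w)|\le\|f\|_{\mathcal F_\nu^\infty}e^{\pi|w|^{2}/2}\le\|f\|_{\mathcal F_\nu^\infty}e^{\pi(1+N^{2})/2}$, the lower bound $|g_\beta(w)|\gtrsim\beta\,e^{\pi N^{2}\frac{2-\beta}{2\beta}-\gamma N}$, and $|1-e^{2i\pi(w-z)}|\gtrsim 1$ (in fact $\gtrsim e^{2\pi N}$ on the lower edge) to get $|\Phi(w)|\lesssim\|f\|_{\mathcal F_\nu^\infty}|g_\beta(z)|\,e^{\pi N^{2}\frac{\beta-1}{\beta}+\gamma N}\to0$ since $\beta<1$; as the horizontal edges have length $1$, $\oint_{\partial R_N}\Phi\to0$.

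Finally, the residue theorem gives $\oint_{\partial R_N}\Phi=2i\pi\bigl(\operatorname{Res}_{w=z}\Phi+\sum_{|\beta k|<N}\operatorname{Res}_{w=z_k}\Phi\bigr)$, with $\operatorname{Res}_{w=z}\Phi=-\dfrac{f(z)}{2i\pi}$ (from the simple zero of $1-e^{2i\pi(w-z)}$) and $\operatorname{Res}_{w=z_k}\Phi=\dfrac{f(z_k)\,g_\beta(z)}{g_\beta'(z_k)(1-e^{2i\pi(z_k-z)})}$; letting $N\to\infty$ yields exactly the claimed series, and since every bound is uniform for $z$ in compact subsets of the strip the convergence is uniform on such compacta, extending to all of $\mathbb C/\Gamma$ because both sides satisfy \eqref{funct-equa}. (One may alternatively verify separately that the series converges locally uniformly — the bounds above give $|f(z_k)L_k(z)|\lesssim\|f\|_{\mathcal F_\nu^\infty}|g_\beta(z)|e^{-\pi\beta(1-\beta)k^{2}}$ — but the residue argument is still needed to identify the sum with $f$, since $\sum f(z_k)L_k$ a priori grows like $g_\beta$, not like an element of $\mathcal F_\nu^\infty$.) The main obstacle is precisely the pair of growth estimates for $g_\beta$, and in particular the lower bound along the moving horizontal contours: this is the step that consumes the oversampling hypothesis $1/\beta>1$ and that requires the sine–type / Weierstrass–product analysis of \cite[Lemma 6]{ALZ}.
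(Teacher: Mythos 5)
The paper does not actually prove this proposition: it is imported verbatim from the companion paper \cite{ALZ}, where it is obtained as a step in the proof of Theorem \ref{cct}, so there is no in-paper argument to compare yours against. On its own merits, your residue-theorem proof is the natural one and is consistent with the one ingredient this paper does quote from \cite{ALZ} (the two-sided sine-type estimates of Lemma 6 there). Your exponent bookkeeping checks out: with $w=u+iv$, $u$ bounded, $|g_{\beta}(w)|\asymp e^{\pi u^{2}/2}e^{\pi v^{2}(2-\beta)/(2\beta)}$ up to factors $e^{\pm\gamma|v|}$ and $d(w,Z)$, so the horizontal edges contribute $O\bigl(e^{\pi N^{2}(\beta-1)/\beta+\gamma N}\bigr)\to0$ precisely because $\beta<1$, the residues at $w=z$ and $w=z_{k}$ are computed correctly, and the terms of the series do decay like $e^{-\pi\beta(1-\beta)k^{2}}$.

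One point needs more care: the quasi-periodicity. Since $g_{\beta}(w+1)=e^{\pi w+\pi/2}g_{\beta}(w)$ while $f(w+1)=e^{2\pi i\nu}e^{\pi w+\pi/2}f(w)$, your integrand satisfies $\Phi(w+1)=e^{2\pi i\nu}\Phi(w)$, so the vertical sides of $\partial R_{N}$ cancel only when $e^{2\pi i\nu}=1$. Your proposed fix --- replacing $g_{\beta}$ by $e^{2i\pi\nu z}g_{\beta}(z)$ --- does restore exact periodicity, but it is not quite ``harmless'': since the modified function $\tilde g_{\beta}$ has $\tilde g_{\beta}'(z_{k})=e^{2i\pi\nu z_{k}}g_{\beta}'(z_{k})$, the interpolation formula you then obtain carries an extra factor $e^{2i\pi\nu(z-z_{k})}$ in each term. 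In other words, you prove the formula for the modified generating function, and the display exactly as stated is the case $\nu\in\mathbb{Z}$. This may well be an imprecision in the statement rather than in your argument, but you should say explicitly which version you establish. Two routine technicalities should also be mentioned: you need $Z$ separated (so that suitable heights $N_{j}\to\infty$ staying at fixed distance from $Z$ exist, and so that the lower bound involving $d(w,Z)$ is usable), and since the $\Re z_{k}$ cluster near $0$ you should shift the rectangle horizontally to $[\epsilon,1+\epsilon]\times[-N,N]$ so that no pole of $\Phi$ lies on a vertical edge.
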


\subsection{Interpolating sequences for $\mathcal{F}_{\protect\nu %
}^{(r)}\left( \mathbb{C}/\mathbb{Z}\right) $}

Our next result is a sufficient condition for interpolating in $\mathcal{F}%
_{\nu }^{(r)}\left( \mathbb{C}/\mathbb{Z}\right) $.

\begin{theorem}
\label{Interpolation}Suppose that $Z$ is a separated sequence. If $D^{+}(Z)<%
\frac{1}{r+1}$, then $Z$ is interpolating for $\mathcal{F}_{\nu
}^{(r)}\left( \mathbb{C}/\mathbb{Z}\right) $.
\end{theorem}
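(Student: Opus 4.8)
The plan is to transfer the interpolation problem from $\mathcal{F}_{\nu}^{(r)}(\mathbb{C}/\mathbb{Z})$ to the corresponding problem for the true polyanalytic Fock space $\mathcal{F}_{2}^{(r)}(\mathbb{C})$ on the whole plane, where Beurling-type density characterizations of interpolating sequences are available (via the Gröchenig--Lyubarskii circle of ideas, or the polyanalytic de Branges space / Landau level machinery referenced in the excerpt). Concretely, recall from Section 3 that $\mathcal{F}_{\nu}^{(r)}(\mathbb{C}/\mathbb{Z}) = M\,\mathcal{H}_{h_r}^{\nu}(\mathbb{C}/\mathbb{Z}) = \mathcal{B}^{(r)}[L_{\nu}^2(0,1)]$, and that an element $F\in\mathcal{F}_{\nu}^{(r)}(\mathbb{C}/\mathbb{Z})$ satisfies the quasi-periodicity $F(z+k)=e^{2\pi ik\nu}e^{\pi kz+\frac{\pi}{2}k^2}F(z)$. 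The first step is to unfold this periodicity: given target data $(a_k)\in\ell^2$ indexed by a separated $Z\subset\Lambda(\mathbb{Z})=[0,1)\times\mathbb{R}$ with $D^{+}(Z)<\frac{1}{r+1}$, form the $\mathbb{Z}$-orbit $\widetilde{Z}=Z+\mathbb{Z}\subset\mathbb{C}$ together with the ``propagated'' data $\widetilde a_{z_k+m}=e^{2\pi im\nu}e^{\pi m z_k+\frac{\pi}{2}m^2}a_k$. The point is that a function in $\mathcal{F}_{\nu}^{(r)}(\mathbb{C}/\mathbb{Z})$ interpolating $Z$ with values $(a_k)$ is exactly the same as a function in $\mathcal{F}_{2}^{(r)}(\mathbb{C})$ (with appropriate growth along the real direction) interpolating $\widetilde Z$ with values $(\widetilde a_{z_k+m})$ and automatically quasi-periodic; the weighted $\ell^2$ balance of the two data sequences matches because $|e^{\pi m z_k+\frac{\pi}{2}m^2}|^2 e^{-\pi|z_k+m|^2}=e^{-\pi|z_k|^2}$, which is precisely the invariance relation (\ref{functp}) with $p=2$.

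Second, I would verify the Beurling density bookkeeping. A standard strip-to-plane density computation shows $D^{+}_{\mathbb{C}}(\widetilde Z)=D^{+}(Z)$ in the normalization where the critical density for $\mathcal{F}_{2}^{(r)}(\mathbb{C})$ is $r+1$ (the $r$-th Landau level has ``Nyquist'' density $r+1$ for sampling, $r+1$ for interpolation on the interpolating side after the $e^{-\pi|z|^2}$ normalization): here I would cite the characterization of interpolating sequences for true polyanalytic Fock spaces, under which $D^{+}(\widetilde Z)<\frac{1}{r+1}$ (after the reciprocal convention used throughout this paper, matching Example \ref{regular} where $Z_\beta=i\beta\mathbb{Z}$ has density $1/\beta$ and the threshold is $\frac{1}{r+1}$) guarantees that $\widetilde Z$ is an interpolating sequence for $\mathcal{F}_{2}^{(r)}(\mathbb{C})$, with a bounded linear interpolation operator. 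Since $\mathcal{B}^{(r)}$ is a unitary isomorphism, pulling this back gives the desired interpolating function $F$ in $\mathcal{F}_{\nu}^{(r)}(\mathbb{C}/\mathbb{Z})$ together with the stability estimate (\ref{stab}). Alternatively, in the spirit of Remark \ref{Method} and the proof of Theorem \ref{FrameHermite}, one can build $F$ directly by periodizing: take a generating function with zeros of multiplicity $r+1$ off $Z$ and produce, via the surjectivity of a complex periodization operator analogous to Lemma \ref{ComplexPeriodization} but now with $\ell^2$ (rather than $\ell^1$) control, a function in the quotient space; the density condition $D^{+}(Z)<\frac{1}{r+1}$ is exactly what makes the relevant Gaussian-type integral $\int_{\mathbb{R}\times[0,1/\beta]}e^{\pi(r+1)\beta|x|^2-\pi|x|^2}\,dx\,dy$ finite in the model case, and more generally what keeps the interpolation series in $L^2$.

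The main obstacle I anticipate is the direction-sensitive nature of the density for polyanalytic spaces together with the unboundedness of the strip: unlike the analytic case treated in \cite{ALZ}, for $r\ge 1$ the ``interpolation'' threshold involves the multiplicity $r+1$ of forced zeros, so the periodized generating function must vanish to order $r+1$ on $Z+\mathbb{Z}$, and controlling the lower bound for such a product (the analogue of (\ref{lower-bd}), raised to the power $r+1$, times $e^{-\pi|z|^2/2}$) along horizontal lines while keeping the resulting series square-summable is delicate. Equivalently, in the transfer approach the subtlety is to confirm that the invariant quantity $|F(z)|^2 e^{-\pi|z|^2}$ being $\mathbb{Z}$-periodic lets the whole-plane interpolation operator restrict to the quasi-periodic subspace without loss — this is a compatibility check between the periodization and the interpolation operator. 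I expect both routes to go through, and would present the transfer argument as the main line, with the explicit periodization construction (mirroring Section 4.4) as the technical backbone.
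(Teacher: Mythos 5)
Your main ``transfer'' route has a genuine gap that I do not think can be repaired as stated. A nonzero element of $\mathcal{F}_{\nu }^{(r)}\left( \mathbb{C}/\mathbb{Z}\right) $ is \emph{never} an element of the planar space $\mathcal{F}_{2}^{(r)}\left( \mathbb{C}\right) $: by the very invariance (\ref{functp}) you invoke, $|F(z)|^{2}e^{-\pi |z|^{2}}$ is $\mathbb{Z}$-periodic, so its integral over all of $\mathbb{C}$ diverges unless $F\equiv 0$. For the same reason the unfolded data is inadmissible: the quantity $|\widetilde{a}_{z_{k}+m}|^{2}e^{-\pi |z_{k}+m|^{2}}=|a_{k}|^{2}e^{-\pi |z_{k}|^{2}}$ is \emph{constant in} $m$, so $\sum_{k,m}|\widetilde{a}_{z_{k}+m}|^{2}e^{-\pi |z_{k}+m|^{2}}=\infty $ whenever $(a_{k})\neq 0$ --- the ``weighted $\ell ^{2}$ balance'' does not match, it blows up. So there is no planar interpolation problem to solve, and even if there were, you would still have to periodize its solution, which destroys the interpolated values at $Z$ by adding the contributions of all translates. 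On top of this, the result you propose to cite --- a Beurling-density \emph{characterization} of interpolating sequences for the true polyanalytic (single Landau level) Fock spaces on $\mathbb{C}$ --- is not available; only sufficient lattice conditions in the Gr\"{o}chenig--Lyubarskii line are known, and they would not cover a general separated $Z$.

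The actual proof goes by a direct construction inside the cylinder, closer in spirit to your briefly sketched alternative, but with a key structural ingredient you do not identify: the raising operator $\left( \partial _{z}-\pi \overline{z}\right) ^{r}$ is an isometric isomorphism from the analytic space $\mathcal{F}_{\nu }\left( \mathbb{C}/\mathbb{Z}\right) $ onto $\mathcal{F}_{\nu }^{(r)}\left( \mathbb{C}/\mathbb{Z}\right) $ (cf. (\ref{TruePolyRepresentation})). One first reduces to $Z$ uniformly close to $i\frac{1}{\beta }\mathbb{Z}$ with $\beta <\frac{1}{r+1}$, builds periodic infinite products $g_{n,Z}$ and sets $G_{n}=(1-e^{2i\pi (z_{n}-z)})^{r}\left( g_{n,Z}\right) ^{r+1}$, so that each building block vanishes to order $r+1$ off $z_{n}$ and to order exactly $r$ at $z_{n}$; the interpolant is an explicit series in these blocks, and its membership in $\mathcal{F}_{\nu }^{(r)}\left( \mathbb{C}/\mathbb{Z}\right) $ is obtained by exhibiting it as $C_{r}\left( \partial _{z}-\pi \overline{z}\right) ^{r}F$ for an auxiliary $F$ which lies in the \emph{analytic} space $\mathcal{F}_{\nu }\left( \mathbb{C}/\mathbb{Z}\right) $ by the estimates of \cite{ALZ}. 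The exponent $r+1$ on the products is exactly where the hypothesis $D^{+}(Z)<\frac{1}{r+1}$ enters, as you correctly anticipated; but without the raising-operator reduction (or an equivalent device) your sketch does not yield the stability estimate (\ref{stab}) or membership in the target space. Note also that the periodization operator of Lemma \ref{ComplexPeriodization} is used in the proof of Theorem \ref{FrameHermite}, not here.
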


\begin{proof}
As in the proof of sufficiency of interpolation for the analytic case in 
\cite[Theorem 6]{ALZ}, we may therefore assume that $Z$ is uniformly close
to the sequence $\Lambda _{\beta }=i\frac{1}{\beta }\mathbb{Z}$, and that $%
\beta <\frac{1}{r+1}$. Consider the function 
\begin{equation*}
g_{n,Z}:=g_{n}(z)=\left\{ 
\begin{matrix}
\prod_{k\neq n}(1-e^{2i\pi (z_{k}-z)})\times \prod_{k<0}(1-e^{2i\pi
(z-z_{k})})\text{,} & \text{ if }n\geq 0 \\ 
\prod_{k\geq n}(1-e^{2i\pi (z_{k}-z)})\times \prod_{k\neq n}(1-e^{2i\pi
(z-z_{k})})\text{,} & \text{ if }n<0%
\end{matrix}%
\right. \text{.}
\end{equation*}%
Next, let%
\begin{equation*}
G_{n,Z}(z):=G_{n}(z)=(1-e^{2i\pi (z_{n}-z)})^{r}\left( g_{n,Z}\right) ^{r+1}%
\text{.}
\end{equation*}%
Taking into account the general form of a function in $\mathcal{F}_{\nu
}^{(r)}\left( \mathbb{C}/\mathbb{Z}\right) $ given by (\ref%
{TruePolyRepresentation}), and that 
\begin{equation*}
e^{\pi \left\vert z\right\vert ^{2}}\left( \partial _{z}\right) ^{r}\left[
e^{-\pi \left\vert z\right\vert ^{2}}\left( e^{\frac{\pi }{2}%
z^{2}+Az}G(z)\right) \right] =e^{\frac{\pi }{2}z^{2}+Az}\left( \partial
_{z}+2i\pi \Im (z)+A\right) ^{r}G(z)\text{,}
\end{equation*}%
given $z_{n}=x_{n}+iy_{n}$, write $w_{n}=x_{n}+i\lfloor y_{n}\rfloor $ and
consider the following interpolation function 
\begin{equation}
f(z)=C_{r}\sum_{n\in \mathbb{Z}}a_{n}\frac{e^{\pi (z-z_{n})\overline{w_{n}}+%
\frac{\pi }{2}\left( (z-w_{n})^{2}-(z_{n}-w_{n})^{2}\right) }}{%
(g_{n}(z_{n}))^{r-1}}\left( \partial _{z}+2i\pi (\Im (z)-\lfloor
y_{n}\rfloor )\right) ^{r}G_{n}(z)\text{,}  \label{func-interpolation}
\end{equation}%
where $C_{r}=\left( \frac{\pi ^{r}}{(r)!}\right) ^{\frac{1}{2}}\left( \frac{1%
}{(2i\pi )^{r}}\right) $. Clearly, $f(z_{k})=a_{k}$ for all $k\in \mathbb{Z}$%
. To show that $f\in \mathcal{F}_{\nu }^{(r)}\left( \mathbb{C}/\mathbb{Z}%
\right) $, define the auxiliary function 
\begin{equation}
F(z)=\sum_{n\in \mathbb{Z}}a_{n}\frac{e^{\pi (z-z_{n})\overline{w_{n}}+\frac{%
\pi }{2}\left( (z-w_{n})^{2}-(z_{n}-w_{n})^{2}\right) }}{(g_{n}(z_{n}))^{r}}%
G_{n}(z)(z)\text{.}
\end{equation}%
By the same arguments as in \cite[Theorem 6]{ALZ}, we have $F\in \mathcal{F}%
_{\nu }\left( \mathbb{C}/\mathbb{Z}\right) $. The result then follows by
observing that 
\begin{equation*}
f(z)=C_{r}\left( \partial _{z}-\pi \overline{z}\right) ^{r}F(z)
\end{equation*}%
and that $\left( \partial _{z}-\pi \overline{z}\right) ^{r}$ is an isometric
isomorphism maping $\mathcal{F}_{\nu }\left( \mathbb{C}/\mathbb{Z}\right) $
onto $\mathcal{F}_{\nu }^{(r)}\left( \mathbb{C}/\mathbb{Z}\right) $.
\end{proof}

When $Z$ is the regular lattice $Z=i\beta \mathbb{Z}$, $D^{+}(Z)=\frac{1}{%
\beta }$. Thus, according to the above theorem, if $\beta >r+1$ then $i\beta 
\mathbb{Z}$ is interpolating for $\mathcal{F}_{\nu }^{(r)}\left( \mathbb{C}/%
\mathbb{Z}\right) $.

\section{Polyanalytic Fock spaces on $\mathbb{C}/\mathbb{Z}$}

We conclude this paper by defining the polyanalytic Fock spaces $\mathbf{F}%
_{\nu }^{(n)}\left( \mathbb{C}/\mathbb{Z}\right) $, since it seems that they
have not been considered before, or at least we could not find them in the
literature, in particular, \cite%
{DimensionTheta,GhanmiIntissar2008,GhanmiIntissar2013,IZ1,IZ2,Z1} only deal
with the true (Landau levels) eigenspaces $\mathcal{F}_{\nu }^{(r)}\left( 
\mathbb{C}/\mathbb{Z}\right) $, considered in Section 3. In \cite[Remark 1.5]%
{Ghanmi2023}, the possibility of the construction of polyanalytic Fock space
is pointed out, but the idea is not explored further. \emph{The polyanalytic
Fock spaces }$\mathbf{F}_{\nu }^{(n)}\left( \mathbb{C}/\mathbb{Z}\right) $%
\emph{\ should not be confused with the true polyanalytic spaces, or Landau
levels eigenspaces }$\mathcal{F}_{\nu }^{(r)}\left( \mathbb{C}/\mathbb{Z}%
\right) $, considered in Section 3, but we will show that $\mathbf{F}_{\nu
}^{(n)}\left( \mathbb{C}/\mathbb{Z}\right) $ can be decomposed as a direct
sum of the first $n$ spaces $\mathcal{F}_{\nu }^{(r)}\left( \mathbb{C}/%
\mathbb{Z}\right) $. This extends to the $\mathbb{C}/\mathbb{Z}$ setting
Vasilevski's orthogonal decomposition of $\mathbf{F}^{(n)}\left( \mathbb{C}%
\right) $ into true polyanalytic Fock spaces $\mathcal{F}^{(r)}\left( 
\mathbb{C}\right) $. Moreover, the spaces $\mathbf{F}_{\nu }^{(n)}\left( 
\mathbb{C}/\mathbb{Z}\right) $ will be linked to a vectorial STFT acting on $%
{L}_{\nu }^{2}\left[ (0,1),\mathbb{C}^{N}\right] $, when the vector of
windows is composed of Hermite functions, extending to $\mathbf{F}_{\nu
}^{(n)}\left( \mathbb{C}/\mathbb{Z}\right) $ a similar connection known for $%
\mathbf{F}^{(n)}\left( \mathbb{C}\right) $ \cite{Abr2010,AbrGabor}.

\subsection{Vector-valued STFT on ${L}_{\protect\nu }^{2}\left[ (0,1),%
\mathbb{C}^{N}\right] $}

A construction of a vectorial STFT acting on $L^{2}\left( \mathbb{R},\mathbb{%
C}^{N}\right) $ appeared in \cite{Abr2010,AbrGabor}, motivated by Balan's
idea of using Gabor superframes for the purpose of multiplexing of signals
(sending several signals in the same channel and later separate them at the
receiver) \cite{balan00,BalanWH}. See also \cite{HanLar} for a deep
theoretical study of superframes. For the choice of the vectorial window $%
\mathbf{g=h}_{N}=(h_{0},...,h_{N-1})$, the vectorial STFT acting on $%
L^{2}\left( \mathbb{R},\mathbb{C}^{N}\right) $, becomes a multiple of a
Bargmann-type integral transform mapping $L^{2}\left( \mathbb{R},\mathbb{C}%
^{N}\right) $ to the Fock space of polyanalytic functions in $\mathbb{C}$,
introduced in the proof of the results in \cite{Abr2010}.

In this subsection we describe an analogue construction of a vectorial STFT
acting on ${L}_{\nu }^{2}\left[ (0,1),\mathbb{C}^{N}\right] $. It will be
used in Section 3.3 to define a Bargmann-type transform mapping to the
(full) Fock space of polyanalytic functions on $\mathbb{C}/\mathbb{Z}$. Let $%
{L}_{\nu }^{2}\left[ (0,1),\mathbb{C}^{N}\right] $ be the vectorial Hilbert
space defined by the inner product below, where $\mathbf{f=}\left(
f_{0},...,f_{N-1}\right) $ and $\mathbf{g=}\left( g_{0},...,g_{N-1}\right) $%
: 
\begin{equation}
\left\langle \mathbf{f},\mathbf{g}\right\rangle _{{L}_{\nu }^{2}\left[ (0,1),%
\mathbb{C}^{N}\right] }=\sum_{i=0}^{N-1}\left\langle
f_{i},g_{i}\right\rangle _{{L}_{\nu }^{2}(0,1)}\text{.}
\label{scalarProduct}
\end{equation}%
Given a vectorial window $\mathbf{g=}\left( g_{0},...,g_{N-1}\right) \in
L^{2}\left( \mathbb{R},\mathbb{C}^{N}\right) $,%
\begin{equation}
\left\langle g_{i},g_{j}\right\rangle =\delta _{i,j}\text{, }i,j=0,...,N-1%
\text{,}  \label{ort}
\end{equation}%
one can define the vectorial STFT acting on $\mathbf{f=}\left(
f_{0},...,f_{N-1}\right) \in {L}_{\nu }^{2}\left[ (0,1),\mathbb{C}^{N}\right]
$, by 
\begin{equation}
\mathbf{V}_{\mathbf{g}}\mathbf{f}(x,\xi
):=\sum_{i=0}^{N-1}V_{g_{i}}f_{i}(x,\xi )\text{.}  \label{vectorGabor}
\end{equation}%
This defines an isometry%
\begin{equation*}
\mathbf{V}_{\mathbf{g}}:{L}_{\nu }^{2}\left[ (0,1),\mathbb{C}^{N}\right]
\rightarrow L^{2}(\mathbb{C}/\mathbb{Z})
\end{equation*}%
since, from (\ref{Moyal}) and (\ref{ort}),%
\begin{equation}
\left\Vert \mathbf{V}_{\mathbf{g}}\mathbf{f}\right\Vert _{L^{2}\left( 
\mathbb{C}/\mathbb{Z}\right) }^{2}=\left\langle
\sum_{i=0}^{N-1}V_{g_{i}}f_{i}(x,\xi ),\sum_{i=0}^{N-1}V_{g_{i}}f_{i}(x,\xi
)\right\rangle _{L^{2}([0,1]\times \mathbb{R})}=\left\Vert \mathbf{f}%
\right\Vert _{{L}_{\nu }^{2}\left[ (0,1),\mathbb{C}^{N}\right] }^{2}\text{.}
\label{isometry}
\end{equation}%
We define the super Gabor space 
\begin{equation}
\mathbf{H}_{\mathbf{g}}^{\nu }\left( \mathbb{C}/\mathbb{Z}\right) :=\mathbf{V%
}_{\mathbf{g}}\left( {L}_{\nu }^{2}\left[ (0,1),\mathbb{C}^{N}\right]
\right) \text{.}  \label{superGabor}
\end{equation}

\subsection{Polyanalytic Fock spaces $\mathbf{F}_{\protect\nu }^{(N)}\left( 
\mathbb{C}/\mathbb{Z}\right) $}

Consider $\mathbf{F}_{\nu }^{(N)}\left( \mathbb{C}\right) $, the Fock space
of polyanalytic functions, that is, satisfying the generalized
Cauchy-Riemann equation%
\begin{equation*}
\partial _{\overline{z}}^{N}F(z)=0\text{,}
\end{equation*}%
and belonging to $L^{2}(\mathbb{C},e^{-\pi |z|^{2}})$. A fundamental result
in the theory of polyanalytic Fock spaces is Vasilevski's orthogonal
decomposition of $\mathbf{F}^{(n)}\left( \mathbb{C}\right) $ into true
polyanalytic Fock spaces $\mathcal{F}^{(r)}\left( \mathbb{C}\right) $ \cite%
{VasiFock}, namely,%
\begin{equation}
\mathbf{F}^{(n)}\left( \mathbb{C}\right) =\bigoplus\limits_{r=0}^{N-1}%
\mathcal{F}^{(r)}\left( \mathbb{C}\right) \text{.}  \label{VasiFock}
\end{equation}%
Therefore, it follows from (\ref{repTrue}) and from the well-known identity
for Laguerre polynomials:%
\begin{equation*}
L_{n}^{1}(x)=\sum_{r=0}^{N-1}L_{r}(x)\text{,}
\end{equation*}%
that the reproducing kernel of $\mathbf{F}^{(n)}\left( \mathbb{C}\right) $ is%
\begin{equation*}
K_{\mathbf{F}^{(n)}\left( \mathbb{C}\right) }(z,w)=\sum_{r=0}^{N-1}K_{%
\mathcal{F}^{(r)}\left( \mathbb{C}\right) }(z,w)=e^{\pi \overline{w}%
z}L_{n}^{1}(\pi \left\vert z-w\right\vert ^{2})\text{.}
\end{equation*}%
Clearly, $\cup _{n=0}^{\infty }\mathbf{F}^{(n)}\left( \mathbb{C}\right)
=L^{2}(\mathbb{C},e^{-\pi |z|^{2}})$, thus we also have%
\begin{equation}
L^{2}(\mathbb{C},e^{-\pi |z|^{2}})=\bigoplus\limits_{r=0}^{\infty }\mathcal{F%
}^{(r)}\left( \mathbb{C}\right) \text{.}  \label{VasiL2}
\end{equation}%
Now consider $\mathbf{F}_{\nu }^{(N)}\left( \mathbb{C}/\mathbb{Z}\right) $,
the Fock space of polyanalytic functions belonging to $L^{2}(\mathbb{C}/%
\mathbb{Z},e^{-\pi |z|^{2}})$ and satisfying the functional equation%
\begin{equation*}
F(z+k)=e^{2\pi ik\nu }e^{\pi kz+\frac{\pi }{2}\left\vert k\right\vert
^{2}}F(z)\text{.}
\end{equation*}%
The next result computes the reproducing kernel of $\mathbf{F}_{\nu
}^{(N)}\left( \mathbb{C}/\mathbb{Z}\right) $ and extends Vasilevski's
decompositions (\ref{VasiFock}) and (\ref{VasiL2}) to $\mathbf{F}_{\nu
}^{(N)}\left( \mathbb{C}/\mathbb{Z}\right) $. Our proof is different from
the one in \cite{VasiFock}, where restriction operators involving Fourier
transforms have been used (maybe the approach of \cite{VasiFock} also works,
but we didn't try it). Instead, we compute the reproducing kernel of $%
\mathbf{F}_{\nu }^{(N)}\left( \mathbb{C}/\mathbb{Z}\right) $ and decompose
it as the sum of the first $n$ reproducing kernels of $\mathcal{F}_{\nu
}^{(r)}\left( \mathbb{C}/\mathbb{Z}\right) $. The orthogonality of the
spaces $\mathcal{F}_{\nu }^{(r)}\left( \mathbb{C}/\mathbb{Z}\right) $ for
any pair of natural numbers $(r_{1},r_{2})$ with $r_{1}\neq r_{2}$, is then
an immediate consequence of (\ref{ortTruepoly}). This is an example of how
the time-frequency formulation can simplify some proofs, since the
orthogonality of the spaces $\mathcal{F}_{\nu }^{(r)}\left( \mathbb{C}/%
\mathbb{Z}\right) $, would otherwise require the evaluation of the integrals
using Green's formula and integration by parts (the same comment applies to
the double indexed orthogonality (\ref{doubleindexort}) of the sequence $%
\left\{ \varphi _{k}^{(r)}\right\} _{k,r=0}^{\infty }$, which, under our
approach is an immediate consequence of the Moyal-type formulas (\ref{Moyal}%
).

\begin{theorem}
The reproducing kernel of $\mathbf{F}_{\nu }^{(N)}\left( \mathbb{C}/\mathbb{Z%
}\right) $ is%
\begin{equation*}
K_{\mathbf{F}_{\nu }^{(N)}\left( \mathbb{C}/\mathbb{Z}\right) }(z,w)=e^{\pi 
\overline{w}z}\sum_{k\in \mathbb{Z}}e^{2\pi ik\nu }e^{\frac{\pi }{2}%
k^{2}+\pi (z+\overline{w})k}L_{r}^{1}(\pi \left\vert z-w+k\right\vert
^{2})=\sum_{r=0}^{N-1}K_{\mathcal{F}_{\nu }^{(r)}\left( \mathbb{C}/\mathbb{Z}%
\right) }(z,w)\text{.}
\end{equation*}%
Moreover, the spaces $\mathbf{F}_{\nu }^{(N)}\left( \mathbb{C}/\mathbb{Z}%
\right) $ and $L^{2}(\mathbb{C}/\mathbb{Z},e^{-\pi |z|^{2}})$ have the
following orthogonal decompositions into true polyanalytic spaces $\mathcal{F%
}_{\nu }^{(r)}\left( \mathbb{C}/\mathbb{Z}\right) $:%
\begin{equation}
\mathbf{F}_{\nu }^{(N)}\left( \mathbb{C}/\mathbb{Z}\right)
=\bigoplus\limits_{r=0}^{N-1}\mathcal{F}_{\nu }^{(r)}\left( \mathbb{C}/%
\mathbb{Z}\right)  \label{VasiPeriodicTrue}
\end{equation}%
and%
\begin{equation}
L^{2}(\mathbb{C}/\mathbb{Z},e^{-\pi |z|^{2}})=\bigoplus\limits_{r=0}^{\infty
}\mathcal{F}_{\nu }^{(r)}\left( \mathbb{C}/\mathbb{Z}\right) \text{.}
\label{VasiPeriodicL2}
\end{equation}
\end{theorem}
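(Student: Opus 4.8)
The plan is to realise $\mathbf{F}_{\nu}^{(N)}\left(\mathbb{C}/\mathbb{Z}\right)$ as the image under the multiplier $M$ of the super Gabor space $\mathbf{H}_{\mathbf{h}_{N}}^{\nu}\left(\mathbb{C}/\mathbb{Z}\right)$ attached to the vectorial window $\mathbf{h}_{N}=(h_{0},\dots,h_{N-1})$, and to read everything off from that identification. Directly from the definition \eqref{vectorGabor} of $\mathbf{V}_{\mathbf{g}}$ and the isometry \eqref{isometry}, the range $\mathbf{H}_{\mathbf{h}_{N}}^{\nu}\left(\mathbb{C}/\mathbb{Z}\right)$ is the algebraic sum $\sum_{r=0}^{N-1}\mathcal{H}_{h_{r}}^{\nu}\left(\mathbb{C}/\mathbb{Z}\right)$, which by the Moyal-type orthogonality \eqref{ortTruepoly} is an orthogonal, hence closed, sum; applying the isometric isomorphism $M$ of \eqref{M} gives the orthogonal sum
\begin{equation*}
M\,\mathbf{H}_{\mathbf{h}_{N}}^{\nu}\left(\mathbb{C}/\mathbb{Z}\right)=\bigoplus_{r=0}^{N-1}\mathcal{F}_{\nu}^{(r)}\left(\mathbb{C}/\mathbb{Z}\right)
\end{equation*}
inside $L^{2}(\mathbb{C}/\mathbb{Z},e^{-\pi|z|^{2}})$. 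So the whole theorem reduces to the equality $\mathbf{F}_{\nu}^{(N)}\left(\mathbb{C}/\mathbb{Z}\right)=\bigoplus_{r=0}^{N-1}\mathcal{F}_{\nu}^{(r)}\left(\mathbb{C}/\mathbb{Z}\right)$: once this is proved the left-hand side is a finite orthogonal sum of reproducing kernel spaces, so its kernel is $\sum_{r=0}^{N-1}K_{\mathcal{F}_{\nu}^{(r)}\left(\mathbb{C}/\mathbb{Z}\right)}$, and inserting the formula for $K_{\mathcal{F}_{\nu}^{(r)}\left(\mathbb{C}/\mathbb{Z}\right)}$ from Section 3 together with the Laguerre identity $L_{n}^{1}(x)=\sum_{r=0}^{N-1}L_{r}(x)$ produces the stated closed form, while \eqref{VasiPeriodicL2} is the case $N=\infty$.

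For the inclusion $\bigoplus_{r=0}^{N-1}\mathcal{F}_{\nu}^{(r)}\left(\mathbb{C}/\mathbb{Z}\right)\subseteq\mathbf{F}_{\nu}^{(N)}\left(\mathbb{C}/\mathbb{Z}\right)$ I would use the representation \eqref{TruePolyRepresentation}: a function in $\mathcal{F}_{\nu}^{(r)}\left(\mathbb{C}/\mathbb{Z}\right)$ equals $\bigl(\tfrac{\pi^{r}}{r!}\bigr)^{1/2}e^{\pi|z|^{2}}(\partial_{z})^{r}\bigl[e^{-\pi|z|^{2}}F_{0}(z)\bigr]$ for some $F_{0}\in\mathcal{F}_{2,\nu}\left(\mathbb{C}/\mathbb{Z}\right)$, hence is a polynomial of degree $\le r\le N-1$ in $\overline{z}$ with entire coefficients, i.e. polyanalytic of order $\le N$; it satisfies the functional equation and lies in $L^{2}(\mathbb{C}/\mathbb{Z},e^{-\pi|z|^{2}})$ by construction, and the mutual orthogonality of the summands in $L^{2}(\mathbb{C}/\mathbb{Z},e^{-\pi|z|^{2}})$ is \eqref{ortTruepoly} transported by the isometry $M$.

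The substance is the reverse inclusion, the $\mathbb{C}/\mathbb{Z}$-analogue of Vasilevski's decomposition. Given $F\in\mathbf{F}_{\nu}^{(N)}\left(\mathbb{C}/\mathbb{Z}\right)$, the defining functional equation $F(z+k)=e^{2\pi ik\nu}e^{\pi kz+\frac{\pi}{2}k^{2}}F(z)$ is precisely $\tau(-k)F=e^{2\pi ik\nu}F$ for the Weyl translations $\tau(w)$; since the raising operator $\partial_{z}-\pi\overline{z}$ commutes with $\tau(w)$ (as recorded in the proof of Theorem \ref{FrameHermite}) and so does $\partial_{\overline{z}}$, every function built from $F$ by these operators again satisfies the functional equation. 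Because $\partial_{\overline{z}}^{N}F=0$, the function $\partial_{\overline{z}}^{N-1}F$ is holomorphic, and a Cauchy-estimate argument — using the $\mathbb{Z}$-periodicity of $|F(z)|^{2}e^{-\pi|z|^{2}}$ forced by the functional equation — shows that all $\partial_{\overline{z}}^{m}F$ with $m<N$ stay in $L^{2}(\mathbb{C}/\mathbb{Z},e^{-\pi|z|^{2}})$, so $\partial_{\overline{z}}^{N-1}F\in\mathcal{F}_{\nu}^{(0)}\left(\mathbb{C}/\mathbb{Z}\right)$. Using that a constant multiple of $(\partial_{z}-\pi\overline{z})^{N-1}$ maps $\mathcal{F}_{\nu}\left(\mathbb{C}/\mathbb{Z}\right)$ isomorphically onto $\mathcal{F}_{\nu}^{(N-1)}\left(\mathbb{C}/\mathbb{Z}\right)$ (the fact used in the proof of Theorem \ref{Interpolation}) and the commutation relation $[\partial_{\overline{z}},\partial_{z}-\pi\overline{z}]=-\pi$ to fix the normalising constant, one subtracts the appropriate $\widetilde{F}_{N-1}\in\mathcal{F}_{\nu}^{(N-1)}\left(\mathbb{C}/\mathbb{Z}\right)$, a multiple of $(\partial_{z}-\pi\overline{z})^{N-1}\partial_{\overline{z}}^{N-1}F$, so that $\partial_{\overline{z}}^{N-1}(F-\widetilde{F}_{N-1})=0$; then $F-\widetilde{F}_{N-1}\in\mathbf{F}_{\nu}^{(N-1)}\left(\mathbb{C}/\mathbb{Z}\right)$ and induction on $N$, with base case $\mathbf{F}_{\nu}^{(1)}=\mathcal{F}_{\nu}^{(0)}$ (true by the definitions of Section 3), gives $F\in\bigoplus_{r=0}^{N-1}\mathcal{F}_{\nu}^{(r)}\left(\mathbb{C}/\mathbb{Z}\right)$. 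I expect this $L^{2}$-bookkeeping — that the successive $\overline{z}$-derivatives and raising-operator images remain in the periodised weighted space — to be the only genuinely delicate point; everything else is definitional or a transcription of results already in the paper.

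It remains to justify \eqref{VasiPeriodicL2}: by the finite-$N$ case, $\bigcup_{N\ge1}\mathbf{F}_{\nu}^{(N)}\left(\mathbb{C}/\mathbb{Z}\right)$ is the linear span of all the $\mathcal{F}_{\nu}^{(r)}\left(\mathbb{C}/\mathbb{Z}\right)$, and I would show its closure is all of $L^{2}(\mathbb{C}/\mathbb{Z},e^{-\pi|z|^{2}})$. After transporting by $M^{-1}$ this is the completeness of $\{V_{h_{r}}e_{k,\nu}\}_{r\ge0,\,k\in\mathbb{Z}}$ in $L^{2}(\mathbb{C}/\mathbb{Z})$, which follows from a short computation with the basis formula \eqref{basis_TF}: a function orthogonal to every $V_{h_{r}}e_{k,\nu}$ must, for a.e. $\xi$, be orthogonal in $L^{2}(\mathbb{R})$ to every $h_{r}(\xi-\nu-k)$ and then in $L^{2}(0,1)$ to every $e_{k,\nu}$, hence vanishes, since $\{h_{r}\}_{r}$ is an orthonormal basis of $L^{2}(\mathbb{R})$ and $\{e_{k,\nu}\}_{k}$ one of $L_{\nu}^{2}(0,1)$.
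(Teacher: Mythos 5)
Your proposal is correct in outline but takes a genuinely different route from the paper. The paper works \emph{kernel-first}: it establishes the reproducing formula on $\mathbf{F}_{\nu }^{(N)}\left( \mathbb{C}/\mathbb{Z}\right) $ by dilating ($F_{\epsilon }(z)=F(\epsilon z)$, which lands in the whole-plane space $\mathbf{F}^{(N)}(\mathbb{C})$ where the $L_{n}^{1}$-kernel formula is known, then $\epsilon \rightarrow 1$ by dominated convergence), periodizes the resulting integral over $\mathbb{C}$ into one over the fundamental domain using the functional equation, splits the periodized kernel via $L_{n}^{1}=\sum_{r=0}^{N-1}L_{r}$, and only then reads off the orthogonal decomposition from the kernel decomposition together with (\ref{ortTruepoly}). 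You work \emph{decomposition-first}: a Vasilevski-style ladder argument (apply $\partial _{\overline{z}}^{N-1}$, land in $\mathcal{F}_{\nu }^{(0)}$, subtract the matching element of $\mathcal{F}_{\nu }^{(N-1)}$ built with the raising operator, induct on $N$), after which the kernel formula is a corollary of the finite orthogonal sum. Your route is closer to Vasilevski's original philosophy and has the virtue of exhibiting the projections explicitly through the commutation relation $[\partial _{\overline{z}},\partial _{z}-\pi \overline{z}]=-\pi $; the paper's route avoids the induction entirely and gets the explicit kernel with less operator bookkeeping, at the price of the dilation/limit argument. Both proofs ultimately hinge on the same piece of hard analysis, which in your write-up is the one step you assert but do not prove: the sub-mean-value estimate $|\partial _{\overline{z}}^{m}F(w)|^{2}e^{-\pi |w|^{2}}\lesssim \int_{D(w,1)}|F(z)|^{2}e^{-\pi |z|^{2}}dA(z)$ for polyanalytic $F$ of order $N$, which combined with the $\mathbb{Z}$-periodicity of $|F(z)|^{2}e^{-\pi |z|^{2}}$ and bounded overlap of the disks gives $\left\Vert \partial _{\overline{z}}^{m}F\right\Vert \lesssim \left\Vert F\right\Vert $ in the weighted space. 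This is true and standard (write $F=\sum_{j<N}\overline{z}^{j}F_{j}$ with $F_{j}$ entire and use Cauchy estimates), and it is the exact analogue of the bound $|F(z)|\leq Ce^{\frac{\pi }{2}|z|^{2}}$ that the paper itself invokes without proof in its dilation step; but you should write it out, since without it the inductive step does not close ($\partial _{\overline{z}}^{N-1}F$ must be shown to lie in $\mathcal{F}_{2,\nu }\left( \mathbb{C}/\mathbb{Z}\right) $, not merely to be holomorphic and quasi-periodic). Your completeness argument for (\ref{VasiPeriodicL2}) via the doubly indexed orthonormal system $\{V_{h_{r}}e_{k,\nu }\}$ is also sound and is arguably more self-contained than the paper's one-line appeal to $\cup _{N}\mathbf{F}_{\nu }^{(N)}$ being dense.
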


\begin{proof}
We first proceed as in \cite[Proof of Lemma 9]{GhanmiIntissar2008}, to show
that the following representation formula holds for $F\in \mathbf{F}_{\nu
}^{(N)}\left( \mathbb{C}/\mathbb{Z}\right) $: 
\begin{equation}
F(z)=\int_{\mathbb{C}}F(z)e^{\pi z\overline{w}}L_{n}^{1}(\pi \left\vert
z-w\right\vert ^{2})e^{-\pi |w|^{2}}d(w).  \label{reproducing}
\end{equation}%
First recall that this holds as the reproducing formula for $F\in \mathbf{F}%
_{\nu }^{(N)}\left( \mathbb{C}\right) $. Now, let $F\in \mathbf{F}_{\nu
}^{(N)}\left( \mathbb{C}/\mathbb{Z}\right) $. Then $\partial _{\bar{z}%
}^{n}F=0$ and $|F(z)|\leq Ce^{\frac{\pi }{2}|z|^{2}}$. Set $F_{\epsilon
}(z)=F(\epsilon z)$, for a given $\epsilon \in \left] 0,1\right[ $. Then $%
|F(\epsilon z)|\leq Ce^{\frac{\pi }{2}\epsilon ^{2}|z|^{2}}$ implies%
\begin{equation*}
\int_{\mathbb{C}}|F_{\epsilon }(z)|^{2}e^{-\pi |z|^{2}}dz\leq C\int_{\mathbb{%
C}}e^{-\pi (1-\epsilon ^{2})|z|^{2}}dz<\infty
\end{equation*}%
Thus, since $\partial _{\bar{z}}^{n+1}F_{\epsilon }=0$,\ $F_{\epsilon
}(z)\in \mathbf{F}_{\nu }^{(N)}\left( \mathbb{C}\right) $ and satisfies (\ref%
{reproducing}) for any $\epsilon \in \left] 0,1\right[ $. Letting $\epsilon
\rightarrow 1$, we conclude by the dominate convergence theorem that $F$
satisfies (\ref{reproducing}). Now, let $F\in \mathbf{F}_{\nu }^{(N)}\left( 
\mathbb{C}/\mathbb{Z}\right) $. Using (\ref{reproducing}) and $%
F(z+k)=e^{2\pi ik\nu }e^{\frac{\pi }{2}k^{2}+\pi zk}F(z)$, leads to: 
\begin{eqnarray*}
F(z) &=&\int_{\mathbb{C}}F(w)e^{\pi \overline{w}z}L_{n}^{1}(\pi \left\vert
z-w\right\vert ^{2})e^{-\pi |w|^{2}}dw \\
&=&\sum_{k\in \mathbb{Z}}\int_{(0,1)\times \mathbb{R}}F(w+k)e^{\pi (%
\overline{w}+k)z}L_{n}^{1}(\pi \left\vert z-w-k\right\vert ^{2})e^{-\pi
|w+k|^{2}}dw \\
&=&\sum_{k\in \mathbb{Z}}\int_{(0,1)\times \mathbb{R}}e^{2\pi ik\nu }e^{-%
\frac{\pi }{2}k^{2}-\pi \overline{w}k}F(w)e^{\pi (\overline{w}%
+k)z}L_{n}^{1}(\pi \left\vert z-w-k\right\vert ^{2})e^{-\pi |w|^{2}}dw \\
&=&\int_{(0,1)\times \mathbb{R}}e^{\pi \overline{w}z}\sum_{k\in \mathbb{Z}%
}e^{2\pi ik\nu }e^{-\frac{\pi }{2}k^{2}+\pi (z-\overline{w})k}L_{n}^{1}(\pi
\left\vert z-w-k\right\vert ^{2})e^{-\pi |w|^{2}}dw\text{.}
\end{eqnarray*}%
Thus, the reproducing kernel of $\mathbf{F}_{\nu }^{(N)}\left( \mathbb{C}/%
\mathbb{Z}\right) $ is%
\begin{eqnarray*}
K_{\mathbf{F}_{\nu }^{(N)}\left( \mathbb{C}/\mathbb{Z}\right) }(z,w)
&=&e^{\pi \overline{w}z}\sum_{k\in \mathbb{Z}}e^{2\pi ik\nu }e^{-\frac{\pi }{%
2}k^{2}+\pi (z-\overline{w})k}L_{n}^{1}(\pi \left\vert z-w-k\right\vert ^{2})
\\
&=&e^{\pi \overline{w}z}\sum_{k\in \mathbb{Z}}e^{2\pi ik\nu }e^{-\frac{\pi }{%
2}k^{2}+\pi (z-\overline{w})k}\sum_{r=0}^{N-1}L_{r}(\pi \left\vert
z-w-k\right\vert ^{2})
\end{eqnarray*}%
leading to%
\begin{equation*}
K_{\mathbf{F}_{\nu }^{(N)}\left( \mathbb{C}/\mathbb{Z}\right)
}(z,w)=\sum_{r=0}^{N-1}K_{\mathcal{F}_{\nu }^{(r)}\left( \mathbb{C}/\mathbb{Z%
}\right) }(z,w)\text{,}
\end{equation*}%
so that a generic element $F\in \mathbf{F}_{\nu }^{(N)}\left( \mathbb{C}/%
\mathbb{Z}\right) $ can be written as%
\begin{eqnarray*}
F(w) &=&\int_{\mathbb{C}}F(z)K_{\mathbf{F}_{\nu }^{(N)}\left( \mathbb{C}/%
\mathbb{Z}\right) }(z,w)dz \\
&=&\sum_{r=0}^{N-1}\int_{\mathbb{C}}F(z)K_{\mathcal{F}_{\nu }^{(r)}\left( 
\mathbb{C}/\mathbb{Z}\right) }(z,w)dz \\
&=&\sum_{r=0}^{N-1}(P_{\mathcal{F}_{\nu }^{(r)}\left( \mathbb{C}/\mathbb{Z}%
\right) }F)(w)\text{,}
\end{eqnarray*}%
where $P_{\mathcal{F}_{\nu }^{(r)}\left( \mathbb{C}/\mathbb{Z}\right) }F\in 
\mathcal{F}_{\nu }^{(r)}\left( \mathbb{C}/\mathbb{Z}\right) $ is the
projection of $F$ over $\mathcal{F}_{\nu }^{(r)}\left( \mathbb{C}/\mathbb{Z}%
\right) $. Together with the orthogonality of the spaces $\mathcal{F}_{\nu
}^{(r)}\left( \mathbb{C}/\mathbb{Z}\right) $, which follows from (\ref%
{ortTruepoly}), this establishes (\ref{VasiPeriodicTrue}). Since $\cup
_{n=0}^{\infty }\mathbf{F}_{\nu }^{(N)}\left( \mathbb{C}\right) =L^{2}(%
\mathbb{C},e^{-\pi |z|^{2}})$, this also implies (\ref{VasiPeriodicL2}).
\end{proof}

A Bargmann-type transform%
\begin{equation*}
\mathbf{B}^{\left( n\right) }:{L}_{\nu }^{2}\left[ (0,1),\mathbb{C}^{N}%
\right] \rightarrow \mathbf{F}_{\nu }^{(N)}\left( \mathbb{C}/\mathbb{Z}%
\right)
\end{equation*}%
can be defined, given $\mathbf{f=}\left( f_{0},...,f_{N-1}\right) \in {L}%
_{\nu }^{2}\left[ (0,1),\mathbb{C}^{N}\right] $ and $\mathbf{g=}\left(
g_{0},...,g_{N-1}\right) \in {L}^{2}\left[ \mathbb{R},\mathbb{C}^{N}\right] $%
, by 
\begin{equation}
\mathbf{B}^{\left( N\right) }\mathbf{f}(z)=\sum_{r=0}^{N-1}\mathcal{B}%
^{\left( r\right) }f_{r}(z)\text{.}  \label{SuperBarg}
\end{equation}%
Using (\ref{trupolydef}) and (\ref{vectorGabor}), this can be written as%
\begin{equation*}
\mathbf{B}^{\left( N\right) }f(z)=\sum_{r=0}^{N-1}\mathcal{B}^{\left(
r\right) }f_{r}(z)=e^{-i\pi x\xi +\frac{\pi }{2}\left\vert z\right\vert
^{2}}\sum_{r=0}^{N-1}V_{h_{r}}f_{r}(x,-\xi )=e^{-i\pi x\xi +\frac{\pi }{2}%
\left\vert z\right\vert ^{2}}\mathbf{V}_{\mathbf{h}_{N}}\mathbf{f}(x,-\xi )%
\text{,}
\end{equation*}%
where $\mathbf{h}_{N}\mathbf{=}\left( h_{0},...,h_{N-1}\right) $,\ and it
follows from (\ref{isometry}) that $\mathbf{B}^{\left( N\right) }:{L}_{\nu
}^{2}\left[ (0,1),\mathbb{C}^{N}\right] \rightarrow \mathbf{F}_{\nu
}^{(N)}\left( \mathbb{C}/\mathbb{Z}\right) $ is isometric. This construction
has been recently suggested in \cite[Remark 1.5]{Ghanmi2023}. Fock spaces of
polyanalytic functions of infinite order in $\mathbb{C}$ have been recently
considered in \cite{InfinitePolyanalytic}. The construction seems to be
possible to adapt to $\mathbb{C}/\mathbb{Z}$, but we will not pursue this
direction further.

\subsection{Gabor superframes with Hermite functions}

As in the scalar case, we will define Gabor superframes by considering first
sampling sequences. Thus, we say that $Z\subset \lbrack 0,1)\times \mathbb{R}
$ is sampling, whenever there exist constants $A,B>0$ such that, for all $%
\mathbf{f}\in {L}_{\nu }^{2}\left[ (0,1),\mathbb{C}^{N}\right] $,\ the
inequality holds:%
\begin{equation}
A\left\Vert \mathbf{V}_{\mathbf{h}_{N}}\mathbf{f}\right\Vert _{\mathcal{H}%
_{g}^{\nu }\left( \mathbb{C}/\mathbb{Z}\right) }^{2}\leq \sum_{z\in
Z}\left\vert \mathbf{V}_{\mathbf{h}_{N}}\mathbf{f}(z)\right\vert ^{2}\leq
B\left\Vert \mathbf{V}_{\mathbf{h}_{N}}\mathbf{f}\right\Vert _{\mathcal{H}%
_{g}^{\nu }\left( \mathbb{C}/\mathbb{Z}\right) }^{2}\text{.}
\label{superframe}
\end{equation}%
Let us consider the regular lattice case $Z=Z_{\beta }:=i\beta \mathbb{Z}%
\subset \lbrack 0,1)\times \mathbb{R}$. Equivalently, $Z_{\beta }$ consists
of the pairs $(0,\beta n)\subset \lbrack 0,1)\times \mathbb{R}$, $n\in 
\mathbb{Z}$. Then $\pi (z)\mathbf{g}=M_{\beta n}\mathbf{g}$ and, combining
the definition (\ref{scalarProduct}) with (\ref{ident}), gives 
\begin{equation*}
\left\langle \mathbf{f},\Sigma _{\nu }M_{\beta n}\mathbf{g}\right\rangle _{{L%
}_{\nu }^{2}\left[ (0,1),\mathbb{C}^{N}\right] }=\left\langle \mathbf{f}%
,M_{\beta n}\mathbf{g}\right\rangle _{{L}^{2}\left[ \mathbb{R},\mathbb{C}^{N}%
\right] }\text{.}
\end{equation*}%
Thus, using (\ref{isometry}), the inequalities (\ref{frame}) can be written
as 
\begin{equation}
A\left\Vert \mathbf{f}\right\Vert _{{L}_{\nu }^{2}\left[ (0,1),\mathbb{C}^{N}%
\right] }^{2}\leq \sum_{z\in Z}\left\vert \left\langle \mathbf{f},\Sigma
_{\nu }M_{\beta n}\mathbf{g}\right\rangle _{{L}_{\nu }^{2}\left[ (0,1),%
\mathbb{C}^{N}\right] }\right\vert ^{2}\leq B\left\Vert \mathbf{f}%
\right\Vert _{{L}_{\nu }^{2}\left[ (0,1),\mathbb{C}^{N}\right] }^{2}\text{.}
\label{Superframe}
\end{equation}%
This justifies the following definition of superframe in ${L}_{\nu }^{2}%
\left[ (0,1),\mathbb{C}^{N}\right] $ .

\begin{definition}
Given a vectorial window $\mathbf{g=}\left( g_{0},...,g_{N-1}\right) \in
L^{2}\left( \mathbb{R},\mathbb{C}^{N}\right) $, with the entries satisfying
the orthonormality $\left\langle g_{i},g_{j}\right\rangle =\delta _{i,j}$, $%
i,j=0,...,N-1$, one declares the Gabor system $\mathcal{G}_{\nu }\left( 
\mathbf{g},Z\right) :=\{\Sigma _{\nu }\left( M_{\beta n}\mathbf{g}\right)
,\;n\in \mathbb{Z}\}$ to be a \emph{Gabor superframe} for ${L}_{\nu }^{2}%
\left[ (0,1),\mathbb{C}^{N}\right] $,\emph{\ }whenever there exist constants 
$A,B>0$ such that, for all $f\in {L}_{\nu }^{2}(0,1)$, (\ref{Superframe})
hold.
\end{definition}

For vector-valued systems, the same arguments as in Proposition \ref%
{FrameOperator}, allow to write the Gabor superframe operator in ${L}_{\nu
}^{2}\left[ (0,1),\mathbb{C}^{N}\right] $ as%
\begin{equation*}
\mathbf{S}_{g,\gamma }^{\nu }\mathbf{f}=\sum_{z\in Z}\left\langle \mathbf{f}%
,\Sigma _{\nu }M_{\beta n}\mathbf{g}\right\rangle _{{L}_{\nu }^{2}\left[
(0,1),\mathbb{C}^{N}\right] }\Sigma _{\nu }(M_{\beta n}\mathbf{\gamma })%
\text{,}
\end{equation*}%
\ \ where the dual window $\mathbf{\gamma }$ is defined as%
\begin{equation*}
\mathbf{\gamma }:=\mathbf{S}^{-1}\mathbf{g}\text{.}
\end{equation*}%
Let $g_{i},\gamma _{i}\in \mathcal{S}_{0}$. For $\mathbf{f}\in {L}_{\nu }^{2}%
\left[ (0,1),\mathbb{C}^{N}\right] $, the same manipulations as in
Proposition \ref{FrameOperator}, lead to 
\begin{equation*}
\mathbf{S}_{\mathbf{g,\gamma }}^{\nu }\mathbf{f}=\sum_{k\in \mathbb{Z}%
}\sum_{n\in \mathbb{Z}}\left\langle \mathbf{f},M_{\beta n}T_{k}\mathbf{g}%
\right\rangle _{L^{2}\left( \mathbb{R},\mathbb{C}^{N}\right) }M_{\beta
n}T_{k}\mathbf{\gamma }\text{.}
\end{equation*}%
Let $g_{i},\gamma _{i}\in \mathcal{S}_{0}$, $i=1,...,N$ and $\mathbf{f}\in {L%
}_{\nu }^{2}\left[ (0,1),\mathbb{C}^{N}\right] $. The steps leading to \emph{%
Janssen's representation }of the Gabor frame operator can be repeated as in 
\cite[5-Appendix]{CharlyYurasuper}, and the result is 
\begin{equation*}
\mathbf{S}_{\mathbf{g,\gamma }}^{\nu }\mathbf{f}=\beta ^{-1}\sum_{k\in 
\mathbb{Z}}\sum_{n\in \mathbb{Z}}\left\langle \mathbf{\gamma },M_{k}T_{\frac{%
n}{\beta }}\mathbf{g}\right\rangle _{L^{2}\left( \mathbb{R},\mathbb{C}%
^{N}\right) }M_{k}T_{\frac{n}{\beta }}\mathbf{f}\text{.}
\end{equation*}

\begin{proposition}
\label{Superwr}Let $g_{i},\gamma _{i}\in \mathcal{S}_{0}$, $i=1,...,N$. If $%
\mathbf{S}_{g,\gamma }=\mathbf{I}$ or, equivalently, if there exists $%
\mathbf{\gamma =(}\gamma \mathbf{_{1},...,}\gamma \mathbf{\mathbf{_{N}})}$
such that $\gamma _{i}\in \mathcal{S}_{0}$, $i=1,...,N$, 
\begin{equation}
\beta ^{-1}\sum_{n\in \mathbb{Z}}\left\langle \mathbf{\gamma },M_{k}T_{\frac{%
n}{\beta }}\mathbf{g}\right\rangle _{L^{2}\left( \mathbb{R},\mathbb{C}%
^{N}\right) }e^{2i\pi \frac{nl}{\beta }}=\delta _{k,0}\text{ \ for all }l\in 
\mathbb{Z}\text{,}  \label{superWR}
\end{equation}%
then $\mathcal{G}_{\nu }\left( \mathbf{g},Z_{\beta }\right) $ is a Gabor
superframe for ${L}_{\nu }^{2}\left[ (0,1),\mathbb{C}^{N}\right] $.
\end{proposition}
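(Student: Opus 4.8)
The plan is to run the argument of the scalar Corollary \ref{WexlerRaz} in the vector-valued setting, using the vectorial Janssen representation of $\mathbf{S}_{\mathbf{g},\boldsymbol{\gamma}}^{\nu}$ recorded just above. First I would record the vectorial analogue of Proposition \ref{prop-conv}: for $g_{i},\gamma_{i}\in\mathcal{S}_{0}$ the analysis operator $\mathbf{C}_{\mathbf{g}}^{\nu}\mathbf{f}=\big(\langle\mathbf{f},\Sigma_{\nu}M_{\beta n}\mathbf{g}\rangle_{{L}_{\nu}^{2}[(0,1),\mathbb{C}^{N}]}\big)_{n}$ and the corresponding synthesis operator $\mathbf{D}_{\boldsymbol{\gamma}}^{\nu}$ are bounded; this is immediate from $\mathbf{C}_{\mathbf{g}}^{\nu}\mathbf{f}=\sum_{i=0}^{N-1}C_{g_{i}}^{\nu}f_{i}$, Proposition \ref{prop-conv}, and the Cauchy--Schwarz inequality in the index $i$ (and symmetrically for $\mathbf{D}_{\boldsymbol{\gamma}}^{\nu}$). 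Consequently the series defining $\mathbf{S}_{\mathbf{g},\boldsymbol{\gamma}}^{\nu}$ converges in $L^{2}$, and its Walnut and Janssen representations are legitimate.

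Next I would prove the stated equivalence between $\mathbf{S}_{\mathbf{g},\boldsymbol{\gamma}}^{\nu}=\mathbf{I}$ and (\ref{superWR}). The key observation is that the coefficient $d_{k,n}:=\langle\boldsymbol{\gamma},M_{k}T_{n/\beta}\mathbf{g}\rangle_{L^{2}(\mathbb{R},\mathbb{C}^{N})}=\sum_{i=0}^{N-1}\langle\gamma_{i},M_{k}T_{n/\beta}g_{i}\rangle$ is a single scalar common to all $N$ components, while $M_{k}T_{n/\beta}$ acts diagonally on $\mathbb{C}^{N}$-valued functions; hence the vectorial Janssen representation exhibits $\mathbf{S}_{\mathbf{g},\boldsymbol{\gamma}}^{\nu}$ as $N$ diagonal copies of the one scalar operator $\beta^{-1}\sum_{k,n\in\mathbb{Z}}d_{k,n}M_{k}T_{n/\beta}$ on ${L}_{\nu}^{2}(0,1)$. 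Thus $\mathbf{S}_{\mathbf{g},\boldsymbol{\gamma}}^{\nu}=\mathbf{I}$ holds iff this scalar operator is the identity on ${L}_{\nu}^{2}(0,1)$, which I would verify by testing on the orthonormal basis $e_{j,\nu}(t)=e^{2\pi it(\nu+j)}$: using $T_{n/\beta}e_{j,\nu}=e^{-2\pi i(n/\beta)(\nu+j)}e_{j,\nu}$, expanding the $\mathbb{Z}$-periodic correlation functions $G_{n}$ in Fourier series, and reindexing the modulation, the requirement reduces, by the same Fourier-analytic manipulation as for Corollary \ref{WexlerRaz}, precisely to $\beta^{-1}\sum_{n}d_{k,n}e^{2i\pi nl/\beta}=\delta_{k,0}$ for all $k,l\in\mathbb{Z}$, that is, to (\ref{superWR}). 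This diagonalisation of the superframe operator, together with the $L^{2}(\mathbb{R})$-to-${L}_{\nu}^{2}(0,1)$ collapse — whereby only the periodized sum over $n$ tested against the characters $e^{2i\pi nl/\beta}$ must vanish, rather than each term as in the classical Wexler--Raz relations (\ref{WR}) — is the point I expect to require the most care.

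Finally, once $\mathbf{S}_{\mathbf{g},\boldsymbol{\gamma}}^{\nu}=\mathbf{I}$ is in hand with all $g_{i},\gamma_{i}\in\mathcal{S}_{0}$, the frame inequalities follow from the classical duality argument: for $\mathbf{f}\in{L}_{\nu}^{2}[(0,1),\mathbb{C}^{N}]$,
\[
\|\mathbf{f}\|^{2}=\langle\mathbf{S}_{\mathbf{g},\boldsymbol{\gamma}}^{\nu}\mathbf{f},\mathbf{f}\rangle=\langle\mathbf{C}_{\mathbf{g}}^{\nu}\mathbf{f},\mathbf{C}_{\boldsymbol{\gamma}}^{\nu}\mathbf{f}\rangle_{\ell^{2}}\le\|\mathbf{C}_{\mathbf{g}}^{\nu}\mathbf{f}\|\,\|\mathbf{C}_{\boldsymbol{\gamma}}^{\nu}\mathbf{f}\|\le\sqrt{B'}\,\|\mathbf{C}_{\mathbf{g}}^{\nu}\mathbf{f}\|\,\|\mathbf{f}\|,
\]
where $B'$ is the constant established above with $\|\mathbf{C}_{\boldsymbol{\gamma}}^{\nu}\mathbf{f}\|^{2}\le B'\|\mathbf{f}\|^{2}$; dividing by $\|\mathbf{f}\|$ yields the lower frame bound $A=1/B'$, while the upper bound $B$ is merely the boundedness of $\mathbf{C}_{\mathbf{g}}^{\nu}$. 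Rewriting $\langle\mathbf{f},\Sigma_{\nu}M_{\beta n}\mathbf{g}\rangle_{{L}_{\nu}^{2}[(0,1),\mathbb{C}^{N}]}=\langle\mathbf{f},M_{\beta n}\mathbf{g}\rangle_{L^{2}(\mathbb{R},\mathbb{C}^{N})}$, the vectorial form of (\ref{ident}) proved componentwise, casts these inequalities into the form (\ref{Superframe}); hence $\mathcal{G}_{\nu}(\mathbf{g},Z_{\beta})$ is a Gabor superframe for ${L}_{\nu}^{2}[(0,1),\mathbb{C}^{N}]$.
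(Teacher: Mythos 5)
Your proposal is correct and follows essentially the same route the paper intends: the vectorial Janssen representation (recorded just before the proposition) combined with condition (\ref{superWR}) yields $\mathbf{S}_{\mathbf{g},\boldsymbol{\gamma}}^{\nu}=\mathbf{I}$, and the standard duality estimate with the bounded analysis operators then gives the frame inequalities, exactly as in the scalar Corollary \ref{WexlerRaz}. The paper leaves these details implicit; your write-up merely fills them in (diagonal action on the components, testing on the basis $e_{j,\nu}$, and the Cauchy--Schwarz argument), without deviating from the paper's approach.
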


From this we easily conclude the following.

\begin{proposition}
\label{Criterion}Let $g_{i},\gamma _{i}\in \mathcal{S}_{0}$, $i=1,...,N$. If 
$\mathcal{G}\left( \mathbf{g},\Lambda _{1,\beta }\right) $ is a Gabor
superframe for $L^{2}\left( \mathbb{R},\mathbb{C}^{N}\right) $, then $%
\mathcal{G}_{\nu }\left( \mathbf{g},Z_{\beta }\right) $ is a Gabor
superframe for ${L}_{\nu }^{2}\left[ (0,1),\mathbb{C}^{N}\right] $.
\end{proposition}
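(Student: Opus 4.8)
The plan is to transcribe, \emph{mutatis mutandis}, the proof of Corollary~\ref{LR_L(0,1)}, now using the vector-valued Wexler--Raz biorthogonality relations in place of the scalar ones~\eqref{WR}. First I would record that, because $g_i,\gamma_i\in\mathcal{S}_0$ for every $i=1,\dots,N$, the coordinatewise estimates of Proposition~\ref{prop-conv} (equivalently, the embedding of $\mathcal{S}_0$ into the Wiener amalgam space applied to each $g_i,\gamma_i$) guarantee that the vectorial analysis and synthesis operators attached to the full lattice $\Lambda_{1,\beta}=\mathbb{Z}\times\beta\mathbb{Z}$ are bounded on $L^{2}(\mathbb{R},\mathbb{C}^{N})$ and $\ell^{2}(\mathbb{Z}^{2})$ respectively; in particular $\mathbf{S}_{\mathbf{g},\mathbf{\gamma}}$ is bounded and the canonical dual vector $\mathbf{\gamma}=\mathbf{S}^{-1}\mathbf{g}$ (whose components lie in $\mathcal{S}_0$ by hypothesis) is the relevant dual super-window. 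By Proposition~\ref{Proposition0} applied componentwise, $\Sigma_{\nu}g_i,\Sigma_{\nu}\gamma_i\in{L}_{\nu}^{2}(0,1)$, so all the objects entering Proposition~\ref{Superwr} are well defined.

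Assuming $\mathcal{G}(\mathbf{g},\Lambda_{1,\beta})$ is a Gabor superframe for $L^{2}(\mathbb{R},\mathbb{C}^{N})$, I would then invoke the super-frame version of the Wexler--Raz biorthogonality relations --- the vectorial analogue of \cite[Theorem~7.3.1]{Charly} established in the appendix of \cite{CharlyYurasuper} --- which, with $\mathbf{\gamma}$ as above, reads
\begin{equation*}
\beta^{-1}\left\langle \mathbf{\gamma},M_{k}T_{\frac{n}{\beta}}\mathbf{g}\right\rangle_{L^{2}(\mathbb{R},\mathbb{C}^{N})}=\delta_{n,0}\,\delta_{k,0}\qquad\text{for all }k,n\in\mathbb{Z}.
\end{equation*}
Multiplying this identity by $e^{2i\pi nl/\beta}$ and summing over $n\in\mathbb{Z}$ collapses the left-hand side to the single term $n=0$, so that
\begin{equation*}
\beta^{-1}\sum_{n\in\mathbb{Z}}\left\langle \mathbf{\gamma},M_{k}T_{\frac{n}{\beta}}\mathbf{g}\right\rangle_{L^{2}(\mathbb{R},\mathbb{C}^{N})}e^{2i\pi\frac{nl}{\beta}}=\delta_{k,0}\qquad\text{for all }l\in\mathbb{Z},
\end{equation*}
which is exactly hypothesis~\eqref{superWR}. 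Proposition~\ref{Superwr} then applies and yields that $\mathcal{G}_{\nu}\left(\mathbf{g},Z_{\beta}\right)$ is a Gabor superframe for ${L}_{\nu}^{2}\left[(0,1),\mathbb{C}^{N}\right]$.

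The step that requires the most care is the appeal to the vectorial Wexler--Raz relations: one must make sure the biorthogonality is correctly stated relative to the adjoint lattice $\Lambda_{1,\beta}^{0}=\mathbb{Z}\times\frac{1}{\beta}\mathbb{Z}$, with the pairing being the \emph{sum over the $N$ coordinates} of the scalar $L^{2}(\mathbb{R})$ pairings, and that the canonical dual super-window genuinely has all its components in $\mathcal{S}_0$ so that Proposition~\ref{Superwr} is applicable. When $\beta$ is rational, this last point follows from the vector-valued counterpart of \cite[Theorem~13.2.1]{Charly} (invertibility of the super-frame operator on $\mathcal{S}_0\times\cdots\times\mathcal{S}_0$, via Wiener's lemma for the associated twisted group algebra, as already used in the proof of Corollary~\ref{FrameTotal}); otherwise it is part of the standing hypotheses and no further work is needed. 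All the remaining manipulations are the verbatim vectorial echo of the scalar computation in the proof of Corollary~\ref{LR_L(0,1)}, with inner products taken in $L^{2}(\mathbb{R},\mathbb{C}^{N})$.
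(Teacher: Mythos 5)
Your proposal is correct and follows essentially the same route as the paper: invoke the vectorial Wexler--Raz biorthogonality relations of \cite[Proposition 2.3]{CharlyYurasuper} for the superframe $\mathcal{G}(\mathbf{g},\Lambda_{1,\beta})$, observe that summing against the characters $e^{2i\pi nl/\beta}$ collapses the biorthogonality to condition (\ref{superWR}), and conclude via Proposition \ref{Superwr}. Your extra remarks on the membership of the dual super-window in $\mathcal{S}_0$ are harmless but unnecessary here, since $\gamma_i\in\mathcal{S}_0$ is already a standing hypothesis of the statement, exactly as in the paper's argument.
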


\begin{proof}
If $\mathcal{G}\left( \mathbf{g},\Lambda _{1,\beta }\right) $ is a Gabor
superframe for $L^{2}\left( \mathbb{R},\mathbb{C}^{N}\right) $, and $%
g_{i},\gamma _{i}\in \mathcal{S}_{0}$, $i=1,...,N$, then $D_{\mathbf{g}}$
and $D_{\mathbf{\gamma }}$\ are bounded. Thus, the Wexler-Raz
biorthogonality relations \cite[Proposition 2.3]{CharlyYurasuper},%
\begin{equation*}
\beta ^{-1}\left\langle \mathbf{\gamma },M_{k}T_{\frac{n}{\beta }}\mathbf{g}%
\right\rangle _{L^{2}\left( \mathbb{R},\mathbb{C}^{N}\right) }=\delta
_{n,0}\delta _{k,0}\text{ \ for all }k,n\in \mathbb{Z}
\end{equation*}%
hold \cite[Theorem 7.3.1]{Charly}. Since $g_{i},\gamma _{i}\in \mathcal{S}%
_{0}$ then, by Proposition\ \ref{Proposition0}, $\Sigma _{\nu }g_{i}$,$%
\Sigma _{\nu }\gamma _{i}\in {L}_{\nu }^{2}(0,1)$ and\ by Proposition\ (\ref%
{FrameOp1}), $\mathbf{S}_{\mathbf{g,\gamma }}^{\nu }\mathbf{f}$ is bounded.\
This implies (\ref{superWR}) and, by Proposition \ref{Superwr}, $\mathcal{G}%
_{\nu }\left( \mathbf{g},Z_{\beta }\right) $ is a Gabor superframe for ${L}%
_{\nu }^{2}\left[ (0,1),\mathbb{C}^{N}\right] $.
\end{proof}

Now we will use a celebrated result about Gabor superframes with Hermite
functions.

\begin{equation*}
\end{equation*}

\textbf{Theorem B (Gr\"{o}chenig- Lyubarskii }\cite[Theorem 1.1]%
{CharlyYurasuper}\textbf{). }\emph{Let }$h_{N}=\left(
h_{0},...,h_{N-1}\right) $\emph{\ be the vector of the first }$N$\emph{\
Hermite functions. Then }$G\left( \mathbf{h}_{N},\Lambda _{\alpha ,\beta
}\right) $\emph{\ is a Gabor superframe for }$L^{2}\left( \mathbb{R},\mathbb{%
C}^{N}\right) $\emph{\ if and only if }$\alpha \beta <\frac{1}{N}$.

\begin{equation*}
\end{equation*}

By \cite[(45) and Proposition 3.3]{CharlyYurasuper}, $h_{i},\gamma _{i}\in 
\mathcal{S}_{0}$, $i=0,...,N-1$.\ \ Combining Theorem B with Proposition \ref%
{Criterion}, gives the analogue of Theorem B for ${L}_{\nu }^{2}\left[ (0,1),%
\mathbb{C}^{N}\right] $.

\begin{theorem}
If $\beta <\frac{1}{N}$, then $\mathcal{G}_{\nu }\left( \mathbf{h}%
_{N},Z_{\beta }\right) :=\{\Sigma _{\nu }\left( M_{\beta n}\mathbf{h}%
_{N}\right) ,\;n\in \mathbb{Z}\}$ is a \emph{Gabor superframe} for ${L}_{\nu
}^{2}\left[ (0,1),\mathbb{C}^{N}\right] $.
\end{theorem}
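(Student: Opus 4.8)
The plan is to obtain the statement as an immediate consequence of Theorem B (Gr\"{o}chenig--Lyubarskii) together with the transfer principle of Proposition \ref{Criterion}, so that the proof is essentially a substitution. First I would fix the dilation parameter $\alpha =1$, which is the choice that makes the companion lattice in $\mathbb{R}^{2}$ occurring in Proposition \ref{Criterion} equal to $\Lambda _{1,\beta }=\mathbb{Z}\times \beta \mathbb{Z}$, matching the periodization lattice $\Gamma =\mathbb{Z}$ used throughout the section. With this choice the hypothesis $\beta <\frac{1}{N}$ reads precisely $\alpha \beta <\frac{1}{N}$.

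Next I would apply Theorem B to the vectorial window $\mathbf{h}_{N}=(h_{0},\dots ,h_{N-1})$: since $\alpha \beta <\frac{1}{N}$, the Gabor system $\mathcal{G}\left( \mathbf{h}_{N},\Lambda _{1,\beta }\right) $ is a Gabor superframe for $L^{2}\left( \mathbb{R},\mathbb{C}^{N}\right) $. To feed this into Proposition \ref{Criterion} one needs each component of the window and of the canonical dual window $\mathbf{\gamma }=\mathbf{S}^{-1}\mathbf{h}_{N}$ to lie in Feichtinger's algebra $\mathcal{S}_{0}$. For the Hermite functions $h_{i}$ this is immediate, since they belong to the Schwartz class and hence to $\mathcal{S}_{0}$; for the dual window it is exactly the content of \cite[(45) and Proposition 3.3]{CharlyYurasuper}, which I would quote. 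Proposition \ref{Criterion} then yields that $\mathcal{G}_{\nu }\left( \mathbf{h}_{N},Z_{\beta }\right) =\{\Sigma _{\nu }\left( M_{\beta n}\mathbf{h}_{N}\right) :n\in \mathbb{Z}\}$ is a Gabor superframe for ${L}_{\nu }^{2}\left[ (0,1),\mathbb{C}^{N}\right] $, which is the claim.

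The only genuinely delicate point is the $\mathcal{S}_{0}$-membership of the dual window, which is what Propositions \ref{Superwr} and \ref{Criterion} require: it is needed both to justify the vectorial Janssen-type computation underlying Proposition \ref{Superwr} and to guarantee that the analysis and synthesis operators $C_{\mathbf{g}}^{\nu }$, $D_{\mathbf{\gamma }}^{\nu }$ are bounded on ${L}_{\nu }^{2}\left[ (0,1),\mathbb{C}^{N}\right] $ (cf.\ the scalar Proposition \ref{prop-conv}). Once this fact is imported from \cite{CharlyYurasuper}, the remainder of the argument involves no new analysis and is a direct specialization of already-proved statements.
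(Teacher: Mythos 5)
Your proposal is correct and follows essentially the same route as the paper: the authors likewise set $\alpha=1$, invoke Theorem B of Gr\"{o}chenig--Lyubarskii, cite \cite[(45) and Proposition 3.3]{CharlyYurasuper} for the $\mathcal{S}_{0}$-membership of the Hermite windows and of the canonical dual window, and then apply Proposition \ref{Criterion}. Your additional remarks on why the $\mathcal{S}_{0}$ hypothesis is the only delicate point accurately reflect what Propositions \ref{Superwr} and \ref{Criterion} require.
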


Using the unitary super-Bargmann transform (\ref{SuperBarg}), it is a simple
exercise to conclude the following.

\begin{corollary}
If $\beta <\frac{1}{N}$, then $Z_{\beta }$ is a sampling sequence for $%
\mathbf{F}_{\nu }^{(N)}\left( \mathbb{C}/\mathbb{Z}\right) $.
\end{corollary}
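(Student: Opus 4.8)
The plan is to recognize this corollary as an immediate transfer of the preceding theorem (the one stating that $\beta<\frac{1}{N}$ implies $\mathcal{G}_{\nu}(\mathbf{h}_N,Z_\beta)$ is a Gabor superframe for ${L}_{\nu}^{2}[(0,1),\mathbb{C}^N]$) through the unitary super-Bargmann transform $\mathbf{B}^{(N)}$ defined in \eqref{SuperBarg}. The key structural fact already established in the paper is that $\mathbf{B}^{(N)}:{L}_{\nu}^{2}[(0,1),\mathbb{C}^N]\rightarrow \mathbf{F}_{\nu}^{(N)}(\mathbb{C}/\mathbb{Z})$ is an isometric isomorphism, and that it is related to the vectorial STFT by $\mathbf{B}^{(N)}\mathbf{f}(z)=e^{-i\pi x\xi+\frac{\pi}{2}|z|^2}\mathbf{V}_{\mathbf{h}_N}\mathbf{f}(x,-\xi)$, i.e.\ up to the unimodular-times-Gaussian multiplier $M(z)$ of \eqref{M} it is the vectorial STFT with reflected frequency variable.

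First I would unwind the superframe definition: $\mathcal{G}_{\nu}(\mathbf{h}_N,Z_\beta)$ being a Gabor superframe means that the inequalities \eqref{Superframe} hold, which by the isometry \eqref{isometry} and the identity $\langle \mathbf{f},\Sigma_\nu M_{\beta n}\mathbf{g}\rangle_{{L}_{\nu}^{2}[(0,1),\mathbb{C}^N]}=\langle \mathbf{f},M_{\beta n}\mathbf{g}\rangle_{{L}^{2}[\mathbb{R},\mathbb{C}^N]}$ are equivalent to the sampling inequality \eqref{superframe} for $\mathbf{V}_{\mathbf{h}_N}\mathbf{f}$ on the set $Z_\beta$. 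Next I would apply the multiplier $M$: since $|M(z)|=e^{\frac{\pi}{2}|z|^2}$ is exactly the weight appearing in the norm of $\mathbf{F}_{\nu}^{(N)}(\mathbb{C}/\mathbb{Z})\subset L^2(\mathbb{C}/\mathbb{Z},e^{-\pi|z|^2})$, multiplying $\mathbf{V}_{\mathbf{h}_N}\mathbf{f}$ by $M$ converts the $L^2(\mathbb{C}/\mathbb{Z})$ sampling inequality into the weighted sampling inequality
\begin{equation*}
A\|\mathbf{B}^{(N)}\mathbf{f}\|_{\mathbf{F}_{\nu}^{(N)}(\mathbb{C}/\mathbb{Z})}^{2}\leq \sum_{z\in Z_\beta}|\mathbf{B}^{(N)}\mathbf{f}(z)|^{2}e^{-\pi|z|^2}\leq B\|\mathbf{B}^{(N)}\mathbf{f}\|_{\mathbf{F}_{\nu}^{(N)}(\mathbb{C}/\mathbb{Z})}^{2},
\end{equation*}
bearing in mind that $Z_\beta=i\beta\mathbb{Z}$ is invariant under the reflection $\xi\mapsto-\xi$, so the frequency flip in the relation between $\mathbf{B}^{(N)}$ and $\mathbf{V}_{\mathbf{h}_N}$ causes no change in the index set. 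Since $\mathbf{B}^{(N)}$ is onto $\mathbf{F}_{\nu}^{(N)}(\mathbb{C}/\mathbb{Z})$, this is precisely the statement that $Z_\beta$ is a sampling sequence for $\mathbf{F}_{\nu}^{(N)}(\mathbb{C}/\mathbb{Z})$.

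There is essentially no main obstacle here; the corollary is a dictionary translation, exactly analogous to the scalar passage from \eqref{samplingGabor} to \eqref{sampling} via the Bargmann-type transforms carried out at the start of Section 5. The only point requiring a word of care is confirming that the reflection $\xi\mapsto-\xi$ and the unimodular factor in $M(z)$ do not disturb either the index set $Z_\beta$ (they do not, by symmetry of $i\beta\mathbb{Z}$) or the pointwise moduli $|\mathbf{B}^{(N)}\mathbf{f}(z)|e^{-\frac{\pi}{2}|z|^2}=|\mathbf{V}_{\mathbf{h}_N}\mathbf{f}(x,-\xi)|$; once this is noted, the equivalence of the two sampling inequalities is immediate and the proof is complete.
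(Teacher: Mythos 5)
Your proposal is correct and follows exactly the route the paper intends: the paper dispatches this corollary with the single remark that it is ``a simple exercise'' using the unitary super-Bargmann transform (\ref{SuperBarg}), and your argument is precisely that exercise carried out — translating the superframe inequality into the sampling inequality for $\mathbf{V}_{\mathbf{h}_N}\mathbf{f}$, applying the multiplier $M$, and noting that the reflection $\xi\mapsto-\xi$ leaves $Z_\beta=i\beta\mathbb{Z}$ invariant. Your explicit check of the surjectivity of $\mathbf{B}^{(N)}$ (which rests on the Vasilevski-type decomposition proved earlier in the section) and of the symmetry of the index set are exactly the two points of care the paper leaves implicit.
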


\end{document}